\title[Zariski-dense Hitchin representations in uniform lattices]{Zariski-dense Hitchin representations in uniform lattices}
\author{Jacques Audibert}
\address{
Sorbonne Université and Université Paris Cité, CNRS, IMJ-PRG, F-75005 Paris, France.}
\email{audibert.j@outlook.fr}
\theoremstyle{plain}
\newtheorem{proposition}{Proposition}[section]
\newtheorem{theorem}[proposition]{Theorem}
\newtheorem{corollary}[proposition]{Corollary}
\newtheorem{lemma}[proposition]{Lemma}
\theoremstyle{definition}
\newtheorem{definition}[proposition]{Definition}
\newtheorem{remark}[proposition]{Remark}
\newcommand{\SL}{\textnormal{SL}}
\DeclareMathOperator{\SU}{SU}
\DeclareMathOperator{\GL}{GL}
\newcommand{\G}{\textnormal{G}}
\DeclareMathOperator{\A}{A}
\DeclareMathOperator{\Mat}{Mat}
\DeclareMathOperator{\N}{N}
\DeclareMathOperator{\D}{D}
\DeclareMathOperator{\M}{M}
\DeclareMathOperator{\B}{B}
\DeclareMathOperator{\rank}{rank}
\newcommand{\SO}{\textnormal{SO}}
\DeclareMathOperator{\Gal}{Gal}
\DeclareMathOperator{\Aut}{Aut}
\DeclareMathOperator{\PSL}{PSL}
\DeclareMathOperator{\K}{K}
\DeclareMathOperator{\Det}{Det}
\DeclareMathOperator{\I}{I}
\DeclareMathOperator{\Res}{Res}
\DeclareMathOperator{\Int}{Int}
\DeclareMathOperator{\Hom}{Hom}
\DeclareMathOperator{\PSp}{PSp}
\DeclareMathOperator{\Comm}{Comm}
\DeclareMathOperator{\J}{J}
\DeclareMathOperator{\Diag}{Diag}
\DeclareMathOperator{\PP}{P}
\DeclareMathOperator{\X}{X}
\DeclareMathOperator{\Ad}{Ad}
\DeclareMathOperator{\T}{T}
\DeclareMathOperator{\Nrd}{Nrd}
\DeclareMathOperator{\HH}{H}
\newcommand{\Sp}{\textnormal{Sp}}
\DeclareMathOperator{\SSS}{S}
\DeclareMathOperator{\Tr}{Tr}
\DeclareMathOperator{\MCG}{MCG}
\DeclareMathOperator{\tc}{\overline{\phantom{s}}}
\DeclareMathOperator{\Qbar}{\overline{\mathds{Q}}}
\DeclareMathOperator{\GalF}{\Gal(\overline{\mathds{Q}}/\textrm{F})}
\def\@tocline#1#2#3#4#5#6#7{\relax
  \ifnum #1>\c@tocdepth 
  \else
    \par \addpenalty\@secpenalty\addvspace{#2}%
    \begingroup \hyphenpenalty\@M
    \@ifempty{#4}{%
      \@tempdima\csname r@tocindent\number#1\endcsname\relax
    }{%
      \@tempdima#4\relax
    }%
    \parindent\z@ \leftskip#3\relax \advance\leftskip\@tempdima\relax
    \rightskip\@pnumwidth plus4em \parfillskip-\@pnumwidth
    #5\leavevmode\hskip-\@tempdima
      \ifcase #1
       \or\or \hskip 1em \or \hskip 2em \else \hskip 3em \fi%
      #6\nobreak\relax
    \dotfill\hbox to\@pnumwidth{\@tocpagenum{#7}}\par
    \nobreak
    \endgroup
  \fi}
\begin{document}

\maketitle

\begin{abstract}
    We construct Zariski-dense surface subgroups in infinitely many commensurability classes of uniform lattices of the split real Lie groups $\SL(n,\mathds{R})$, $\Sp(2n,\mathds{R})$, $\SO(k+1,k)$, and $\G_2$. These subgroups are images of Hitchin representations. In particular, we show that every uniform lattice of $\Sp(2n,\mathds{R})$, of $\SO(k+1,k)$ with $k\equiv1,2[4]$ and of $\G_2$ contains infinitely many mapping class group orbits of Zariski-dense Hitchin representations of fixed genus. Together with \cite{Long_ZariskidensesurfaceSL4Z} and \cite{Audibert_Zariskidensesurfacegroups} it implies that all lattices of $\Sp(4,\mathds{R})$ contain a Zariski-dense surface subgroup. This paper follows \cite{Audibert_Zariskidensesurfacegroups} where we constructed Zariski-dense Hitchin representations in non-uniform lattices.
\end{abstract}

\section*{Introduction}

Let $\Lambda$ be a lattice in a semisimple Lie group $G$. We call a subgroup of $\Lambda$ a \emph{surface subgroup} if it is isomorphic to the fundamental group of a closed connected orientable surface of genus at least 2. In their celebrated work \cite{Kahn_Immersingsurfacesinhyerbolicthreemanifold}, Kahn and Markovic exhibit surface subgroups in all uniform lattices of $\SO(3,1)$. Using tools from the latter, Hamenstädt proved that when $G$ is a simple non-compact rank 1 Lie group not isomorphic to $\SO(2n,1)$, all uniform lattices of $G$ contain a surface subgroup \cite{Hamenstadt_Incompressiblesurfaceslocallysymmetricspaces}. Kahn, Labourie and Mozes \cite{Kahn_Surfacegroupsinuniformlattices} extended this result for several other semisimple Lie groups, notably the complex ones. However, their constructions do not work when $G$ is real split.\footnote{See \S 1.2 of \cite{Kahn_Surfacegroupsinuniformlattices}}

Our goal is to construct Zariski-dense surface subgroups in uniform lattices of split real Lie groups that are images of a \emph{Hitchin representation}. Our method is arithmetic and does not use Kahn-Markovic's technique.

There is a great deal of interest in constructing Zariski-dense surface subgroups in lattices. These are called $\emph{thin}$ following Sarnak \cite{Sarnak_NotesThinMatrixGroups}. In the author's previous paper \cite{Audibert_Zariskidensesurfacegroups}, we constructed Zariski-dense surface subgroups in some non-uniform lattices of split real Lie groups. It has been preceded by Long, Reid and Thistlethwaite \cite{Long_ZariskidensesurfaceSL3Z}, Long and Reid \cite{Long_ConstructingThin}, Long and Thistlethwaite \cite{Long_ZariskidensesurfaceSL4Z} \cite{Long_ZariskidensesurfaceSL2k+1Z}.\footnote{See Douba \cite{Douba_Hyperboliclatticesurfacesubgroup} for construction of Zariski-dense surface subgroups in some non-uniform lattices of $\SO(n,1)$.} The present article is an extension of the construction from \cite{Audibert_Zariskidensesurfacegroups} to the case where the lattices are uniform. This case has been substantially less treated. Nevertheless, Long and Reid proved in \cite{Long_Thinsurfacesubgroupscocompactlattices} that there are infinitely many uniform lattices of $\SL(3,\mathds{R})$ that contain the image of a Zariski-dense Hitchin representation. Our technique will also give an alternative proof of this result. 

For any $n\geq2$ denote by $\tau_{n}:\SL(2,\mathds{C})\rightarrow\SL(n,\mathds{C})$ the representation where $\SL(2,\mathds{C})$ acts on the space of homogeneous polynomials in two variables $X$ and $Y$ of degree $n-1$ as
\begin{equation*}
    \begin{pmatrix}
    a&b\\c&d
    \end{pmatrix}
    X^{n-i-1}Y^i=(aX+cY)^{n-i-1}(bX+dY)^i
\end{equation*}
for every $0\leq i\leq n-1$. Up to conjugation, this is the unique $n$-dimensional irreducible representation of $\SL(2,\mathds{C})$ over $\mathds{C}$. We also denote by $\tau_n$ the induced representation of $\PSL(2,\mathds{C})$ in $\PSL(n,\mathds{C})$.

Let $S_g$ be a closed connected orientable surface of genus $g\geq2$. We call \emph{Fuchsian representation} a representation of the form $\tau_n\circ j$ where $j:\pi_1(S_g)\to\PSL(2,\mathds{R})$ is faithful and discrete. Hitchin representations are representations of $\pi_1(S_g)$ into $\PSL(n,\mathds{R})$ that lie in the same connected component of $\Hom(\pi_1(S_g),\PSL(n,\mathds{R}))/\PSL(n,\mathds{R})$ as a Fuchsian representation. These components are called \emph{Hitchin components}. They are the prototypical example of a higher rank Teichmüller space \cite{Wienhard_InvitationHigherTeichmuller}, i.e. a connected component of the character variety of $\pi_1(S_g)$ to $G$ that contains only discrete and faithful representations, as shown in Labourie \cite{Labourie_Anosovflowssurfaceandcurves} and Fock-Goncharov \cite{Fock_ModuliSpacesofLocalsystems}.

Let $G$ be $\SL(n,\mathds{R})$, $\SO(k+1,k)$, $\Sp(2n,\mathds{R})$ or $\G_2$. The greater part of the present article is dedicated to classifying lattices of $G$ that contain the image of a Fuchsian representation.

\begin{theorem}
\label{theoclassificationfuchsian}
For every lattice $\Lambda$ of $G$ listed in Table \ref{Table1}, there exists $g\geq2$ such that $\Lambda$ contains the image of a Fuchsian representation of $\pi_1(S_g)$. Furthermore, up to conjugation and commensurability, these are the only uniform lattices of $G$ that contain the image of a Fuchsian representation.

\begin{table}[htbp]
\centering
\begin{tabular}{|c|c|c|}
    \hline    
    $G$& $n$ or $k$ & $\Lambda$ \\ [0.2ex]
    \hline\hline
    $\SO(k+1,k)$& $k\geq2$, $k\equiv1,2[4]$ & Every uniform lattice \\ [0.2ex]
    \hline
    $\SO(k+1,k)$ & $k\geq 3$, $k\equiv0,3[4]$ & $\Lambda_F$ \\ [0.2ex]
    \hline
    $\emph{\textbf{G}}_2(\mathds{R})$ &  & Every uniform lattice \\ [0.2ex]
    \hline
    $\Sp(2n,\mathds{R})$ & $n\geq2$ & Every uniform lattice \\ [0.2ex]
    \hline
    $\SL(2k+1,\mathds{R})$ & $k\geq1$  & $\SU(\I_{2k+1},\sigma;\mathcal{O}_F[\sqrt{d}])$ \\ [0.2ex]
    \hline
    $\SL(2n,\mathds{R})$ & $n\geq2$  & $\SU(\I_n,\overline{\phantom{s}}\otimes\sigma;\mathcal{O}\otimes\mathcal{O}_F[\sqrt{d}])$ \\ [0.2ex]
    \hline
\end{tabular}
\caption{Here $F\neq\mathds{Q}$ is any totally real number field, $\mathcal{O}_F$ is its ring of integers and $d$ any element of $\mathcal{O}_F$ which is positive at exactly one real place of $F$. Moreover $\mathcal{O}$ is any order of a quaternion algebra over $F$ which splits exactly at the real place of $F$ where $d$ is positive. See Section \ref{subsectionexamplesarithmeticsubgroups} for notations.}
    \label{Table1}
\end{table}
\end{theorem}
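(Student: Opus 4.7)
The plan is to prove the two parts of the theorem in turn, in both cases by working through explicit arithmetic models of the lattices.

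For the existence assertion, I would construct, for each row of Table \ref{Table1}, a Fuchsian representation whose image lies in $\Lambda$. Start with an arithmetic Fuchsian group $\Gamma \subset \PSL(2,\R)$ arising from a quaternion algebra $\mathcal{O}$ over a totally real number field $F \neq \mathds{Q}$ which splits at exactly one real place of $F$; such $\Gamma$ is cocompact and contains a surface subgroup $j(\pi_1(S_g))$ for some $g \geq 2$ by residual finiteness of $\Gamma$. Composing $j$ with $\tau_n$ yields the desired Fuchsian representation, and the containment $\tau_n(\Gamma) \subset \Lambda$ is verified by tracking the symplectic, quadratic, or algebra-theoretic invariants preserved by the principal $\PSL(2,\R)$. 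For $\Sp(2n,\R)$, $\SO(k+1,k)$ with $k \equiv 1,2 \pmod 4$, and $\G_2(\R)$, the form preserved by $\tau_n$ is already defined over $F$ with the right real signature, so the image sits in every uniform lattice of the claimed type, up to commensurability. For $\SL(n,\R)$ and for $\SO(k+1,k)$ with $k \equiv 0,3 \pmod 4$, the auxiliary element $d \in \mathcal{O}_F$ and quaternion order $\mathcal{O}$ appearing in Table \ref{Table1} are chosen precisely to make the structure invariant under $\tau_n(\PSL(2,\R))$ compatible with the unitary or hermitian structure defining $\Lambda$.

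For the classification, suppose $\Lambda$ is a uniform lattice of $G$ containing the image of a Fuchsian representation $\rho$. Since $G$ has real rank $\geq 2$, Margulis arithmeticity yields that $\Lambda$ is commensurable, after conjugation, with the $\mathcal{O}_F$-points of a simply-connected absolutely almost simple $F$-group $\mathbf{H}$ which gives $G$ at a distinguished real place and is anisotropic at all other real places. The Zariski closure of $\rho(\pi_1(S_g))$ in $G$ is $\tau_n(\PSL(2,\R))$, and a conjugate of $\Lambda$ intersects this closure in a uniform lattice of $\PSL(2,\R)$. A Borel-type rationality argument then forces $\tau_n(\PSL(2,\R))$ to descend to an $F$-subgroup $\mathbf{H}_0 \subset \mathbf{H}$, and $\mathbf{H}_0$ must be the norm-one group of a quaternion algebra $\mathcal{Q}/F$ splitting at a unique real place, with $F$ totally real and $F \neq \mathds{Q}$ (the latter being forced by cocompactness). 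This already recovers the arithmetic datum $(F, \mathcal{Q})$ directly from the Fuchsian subgroup.

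The final step is to reconstruct $\mathbf{H}$ itself from $\mathbf{H}_0$. Because $\tau_n$ is a functorial construction (symmetric powers of the standard representation, or the standard seven-dimensional representation of $\G_2$, etc.), the natural form or algebra on the ambient representation space is determined from $\mathcal{Q}$ up to scalar and signature-correcting choices, and a case-by-case comparison with the $F$-classification of forms of $G$ should identify $\mathbf{H}$ as commensurable and conjugate to one of the arithmetic models in Table \ref{Table1}. The main obstacle I expect is precisely this last matching for $\SL(n,\R)$ and for $\SO(k+1,k)$ with $k \equiv 0,3 \pmod 4$, where several inequivalent arithmetic types of uniform lattices coexist; eliminating those that cannot contain any Fuchsian image amounts to a Galois cohomology computation comparing the invariants of $\tau_n$ (discriminant, Brauer class of the underlying division algebra, Hasse--Witt class) with those of the ambient hermitian form, and this is where most of the technical work will lie.
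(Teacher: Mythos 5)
Your strategy matches the paper's: Margulis arithmeticity to realize $\Lambda$ as commensurable with the $\mathcal{O}_F$-points of an $F$-form $\mathbf{H}$ of $G$ over a totally real field $F$, a rationality/descent argument (the paper's Proposition \ref{propmorphismarithmeticlattices}) to show the principal $\SL_2$ comes from an $F$-subgroup which is necessarily the norm-one group of a quaternion algebra over $F$ ramified at all real places but one, and then nonabelian Galois cohomology together with the classification of quadratic, Hermitian, and octonionic forms over $F$ to identify $\mathbf{H}$ among the types in Table \ref{Table1}. You correctly locate the technical core in the case-by-case matching of local invariants (Hasse--Witt class, quaternionic Hermitian form, octonion norm), which the paper carries out through the explicit $\tau_n$-compatible cocycle computations of Propositions \ref{proplatticeSO}, \ref{proplatticeG2}, \ref{proplatticeSLodd}, \ref{proplatticeSp} and \ref{proplatticeSLeven}.
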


Theorem \ref{theoclassificationfuchsian} is shown using nonabelian Galois cohomology. The latter allows to reformulate the question in number theoretic terms. When $G=\SO(k+1,k)$ for instance, the problem amounts to the classification of quadratic forms over algebraic number fields.

Let $\Lambda$ be a uniform lattice of $G$ which contains the image of a Fuchsian representation $\rho:\pi_1(S_g)\to G$. The image of $\rho$ is not Zariski-dense in $G$, but we can deform $\rho$ using a ``bending" procedure as introduced by Johnson and Millson \cite{Johnson_DeformationSpacesCompactHyperbolicManifolds}.
Since $\Lambda$ is uniform, the image of any simple closed curve on $S_g$ has a centralizer in $\Lambda$ of rank as big as possible, see Lemma \ref{centralizercocompactlattice}. This allows to bend $\rho$ to a new representation which lies in $\Lambda$ and is Zariski-dense in $G$. It is more involved to prove that the bending procedure provides infinitely many orbits of Hitchin representations under $\MCG(S_g)$, the mapping class group of $S_g$. This uses Weisfeiler's Strong Approximation Theorem \cite{Weisfeiler_StrongapproximationZariskidensesubgroups} (Theorem \ref{theostrongapproximation}).

\begin{theorem}
\label{theoprincipalthinHitchin}
For every lattice $\Lambda$ of $G$ listed in Table \ref{Table1} there exists $g\geq2$ such that $\Lambda$ contains the image of infinitely many $\MCG(S_g)$-orbits of Zariski-dense Hitchin representations of $\pi_1(S_g)$.
\end{theorem}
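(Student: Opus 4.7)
The plan is to start from a Fuchsian representation $\rho_0:\pi_1(S_g)\to\Lambda$ furnished by Theorem \ref{theoclassificationfuchsian} and deform it via Johnson-Millson bending to obtain Zariski-dense Hitchin representations inside $\Lambda$, then use Weisfeiler's Strong Approximation Theorem to distinguish infinitely many $\MCG(S_g)$-orbits.

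First, fix a non-separating simple closed curve $\gamma\subset S_g$, so that $\pi_1(S_g)$ is an HNN extension of a free group $F$ with stable letter $s$. The image $\rho_0(\gamma)=\tau_n(j(\gamma))$ is regular semisimple in $G$, hence its centralizer $T=Z_G(\rho_0(\gamma))$ is a maximal torus. The uniformity of $\Lambda$ is crucial at this step: by Lemma \ref{centralizercocompactlattice}, $\Lambda\cap T$ is a uniform lattice in $T$, in particular Zariski-dense in $T$. For every $c\in\Lambda\cap T$, the bent representation $\rho_c$ defined by $\rho_c|_F=\rho_0|_F$ and $\rho_c(s)=c\rho_0(s)$ is well defined, since $c$ centralizes $\rho_0(\gamma)$, and lands in $\Lambda$.

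Each $\rho_c$ lies in the Hitchin component: scaling $c$ along a one-parameter subgroup of $T$ gives a continuous path from $\rho_0$ to $\rho_c$ through discrete and faithful representations, and the Hitchin component is open and closed among discrete faithful representations. For Zariski-density, I would argue that the Zariski closure of $\rho_c(\pi_1(S_g))$ contains both the principal copy $\tau_n(\PSL(2,\mathds{R}))$ and the element $c\rho_0(s)$. The proper connected subgroups of $G$ containing $\tau_n(\PSL(2,\mathds{R}))$ form a finite list, and for each such overgroup the locus in $T$ of parameters $c$ with $c\rho_0(s)$ inside it is a proper Zariski-closed subset of $T$. Since $\Lambda\cap T$ is Zariski-dense in $T$, all but finitely many $c\in\Lambda\cap T$ yield a Zariski-dense $\rho_c$.

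The main difficulty is extracting infinitely many $\MCG(S_g)$-orbits from the countable family $\{[\rho_c]\}$. The image $\rho_c(\pi_1(S_g))\subset\Lambda$ is a $G$-conjugation invariant of the $\MCG(S_g)$-orbit of $[\rho_c]$. Assume for contradiction that only finitely many orbits arise; then some infinite subfamily $\{\rho_{c_n}\}$ satisfies $\rho_{c_n}=g_n(\rho_{c_0}\circ\phi_n)g_n^{-1}$ for some $g_n\in G$ and $\phi_n\in\Aut(\pi_1(S_g))$. Applying Weisfeiler's Strong Approximation Theorem (Theorem \ref{theostrongapproximation}) to the Zariski-dense subgroup $H=\rho_{c_0}(\pi_1(S_g))$, for a cofinite set of prime ideals $\mathfrak{p}$ in the arithmetic ring defining $\Lambda$ the reduction of $H$ surjects onto $G(\mathds{F}_\mathfrak{p})$. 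In particular, the $\MCG(S_g)$-orbit of $\bar\rho_{c_0}$ in the finite set $\Hom(\pi_1(S_g),G(\mathds{F}_\mathfrak{p}))/G(\mathds{F}_\mathfrak{p})$ is finite, so only finitely many $G(\mathds{F}_\mathfrak{p})$-conjugacy classes of $\bar\rho_{c_n}$ appear. On the other hand, $\bar\rho_{c_n}(s)=\bar c_n\bar\rho_0(s)$, and applying strong approximation to the torus $T$ lets one choose $\mathfrak{p}$ so that the residues $\bar c_n\in T(\mathds{F}_\mathfrak{p})$ realize more $G(\mathds{F}_\mathfrak{p})$-conjugacy classes than the finite $\MCG(S_g)$-orbit can contain, yielding the desired contradiction.
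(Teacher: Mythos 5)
Your overall framework matches the paper's: start from a Fuchsian representation in $\Lambda$, bend along a simple closed curve by an element of the centralizer in $\Lambda$ (this is where the uniformity of $\Lambda$ and Lemma \ref{centralizercocompactlattice} enter), and invoke Strong Approximation to separate mapping class group orbits. The differences are that you bend along a non-separating curve (HNN extension) while the paper uses a separating one (amalgamated product) — either works — and you control Zariski density by a direct ``finite list of overgroups of $\tau_n(\PSL(2,\mathds{R}))$'' argument, where the paper invokes Guichard's classification through Lemma \ref{lemmaZariskidensity}. A small but real point: for the continuous path from $\rho_0$ to $\rho_c$ you need $c$ in the identity component of the centralizer (in the paper, ``$\B$ has only positive eigenvalues''); a generic $c\in\Lambda\cap T$ lies in some other component of $T\simeq(\mathds{R}^\times)^{\rank G}$, so you must restrict to such $c$ (e.g.\ replace $c$ by $c^2$) before the Hitchin-component argument applies.

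The serious gap is in the last paragraph, where you attempt the contradiction. You claim that because the $\MCG(S_g)$-orbit of $\bar\rho_{c_0}$ in $\Hom(\pi_1(S_g),G(\mathds{F}_\mathfrak{p}))/G(\mathds{F}_\mathfrak{p})$ is ``finite,'' one can choose residues $\bar c_n$ realizing ``more conjugacy classes than the orbit can contain.'' But the ambient hom set is finite, so the orbit being finite carries no information; and in fact, since $\bar\rho_{c_0}$ surjects onto $G(\mathds{F}_\mathfrak{p})$, the $\MCG(S_g)$-orbit can easily exhaust all conjugacy classes of surjective homomorphisms. Nothing in your argument bounds the size of this orbit independently of $\mathfrak{p}$, so there is no contradiction. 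What is missing is a quantity that is simultaneously invariant under $G$-conjugation and under precomposition by $\Aut(\pi_1(S_g))$, and that actually distinguishes the $\bar\rho_{c_n}$. The paper uses the trace set $\Tr(\Gamma_k)$. Under the finitely-many-orbits assumption, only finitely many trace sets occur, so there is a single prime $\mathfrak{p}$ for which all of them reduce surjectively onto $\mathcal{O}_F/\mathfrak{p}$; on the other hand, writing $\Tr(\tau_n(\M))=\PP(\Tr(\M))$ and using the permutation-polynomial result (Corollary 1.8 in \cite{Shallue_PermutationPolynomialsFiniteFields}), one can further insist that $\PP$ is not surjective mod $\mathfrak{p}$. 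Choosing $k$ with $\B^k\equiv\I\bmod\mathfrak{p}$ forces $\Tr(\Gamma_k)\bmod\mathfrak{p}$ into the proper subset $\PP(\mathcal{O}_F/\mathfrak{p})$, contradicting surjectivity. Your sketch has no analogue of this mechanism. Separately, your Strong Approximation step needs Proposition \ref{propstrongapproximationadapted} (to make the conclusion uniform over primes of $\mathcal{O}_F$, not just of a $\Gamma$-dependent ring), and the case $G=\SO(k+1,k)$ cannot be treated with Weisfeiler's theorem alone since $\SO$-forms are not simply connected; the paper reroutes through Nori's theorem, restriction of scalars, and Lemma \ref{lemmaZariskidensityinproducts}.
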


For a given lattice $\Lambda$, the genus $g$ in Theorem \ref{theoprincipalthinHitchin} agrees with the genus $g$ in Theorem \ref{theoclassificationfuchsian}. Together with \cite{Long_ZariskidensesurfaceSL4Z} for $\Sp(4,\mathds{Z})$ and \cite{Audibert_Zariskidensesurfacegroups} for the remaining non-uniform lattices, Theorem \ref{theoprincipalthinHitchin} implies the following.

\begin{corollary}
All lattices of $\Sp(4,\mathds{R})$ contain a Zariski-dense Hitchin representation.
\end{corollary}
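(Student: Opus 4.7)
The plan is to combine Theorem~\ref{theoprincipalthinHitchin} with the prior constructions in \cite{Long_ZariskidensesurfaceSL4Z} and \cite{Audibert_Zariskidensesurfacegroups}, using Margulis' arithmeticity theorem to split the set of lattices of $\Sp(4,\mathds{R})$ into two exhaustive cases. Since $\Sp(4,\mathds{R})$ has real rank $2$, every lattice $\Lambda \subset \Sp(4,\mathds{R})$ is arithmetic and is either uniform or non-uniform; I would treat these two cases separately.

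In the uniform case, the row of Table~\ref{Table1} corresponding to $\Sp(2n,\mathds{R})$ with $n = 2$ covers every uniform lattice, so Theorem~\ref{theoprincipalthinHitchin} directly produces a Zariski-dense Hitchin representation $\rho : \pi_1(S_g) \to \Sp(4,\mathds{R})$ with image inside $\Lambda$. In the non-uniform case, $\Lambda$ is, up to conjugation, commensurable either with $\Sp(4,\mathds{Z})$ or with one of the non-uniform lattices handled in \cite{Audibert_Zariskidensesurfacegroups}; in both situations the cited reference yields a Zariski-dense Hitchin representation $\rho_0 : \pi_1(S_g) \to \Sp(4,\mathds{R})$ whose image lies in some lattice $\Lambda_0$ in the commensurability class of $\Lambda$. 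After conjugating $\Lambda_0$ if necessary so that $\Lambda_0 \cap \Lambda$ has finite index in $\Lambda_0$, the subgroup $H := \rho_0^{-1}(\Lambda_0 \cap \Lambda)$ has finite index in $\pi_1(S_g)$ and therefore equals $\pi_1(S_{g'})$ for some finite cover $S_{g'} \to S_g$. The restriction $\rho_0|_H$ then has image in $\Lambda$, is Hitchin because Hitchin representations pull back to Hitchin representations along finite covers of surfaces, and remains Zariski-dense since a finite-index subgroup of a Zariski-dense subgroup of a connected algebraic group is again Zariski-dense.

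The only nontrivial point is this commensurability transfer, namely the fact that ``contains the image of a Zariski-dense Hitchin representation of some closed surface'' is invariant under commensurability. This rests on two standard facts: the stability of the Hitchin component under finite covers of surfaces, a direct consequence of Labourie's Anosov characterization \cite{Labourie_Anosovflowssurfaceandcurves}; and the elementary observation that the Zariski closure of a finite-index subgroup of a subgroup of a connected algebraic group coincides with the Zariski closure of the ambient subgroup. Once these are recorded, the corollary is an immediate assembly of Theorem~\ref{theoprincipalthinHitchin} and the two cited prior constructions.
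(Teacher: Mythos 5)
Your proposal follows exactly the route the paper takes: Theorem~\ref{theoprincipalthinHitchin} handles all uniform lattices via the $\Sp(2n,\mathds{R})$ row of Table~\ref{Table1}, while \cite{Long_ZariskidensesurfaceSL4Z} and \cite{Audibert_Zariskidensesurfacegroups} cover $\Sp(4,\mathds{Z})$ and the remaining non-uniform lattices respectively. The commensurability-transfer argument you spell out (restricting to a finite-index subgroup preserves both the Hitchin property and Zariski density) is the correct justification for the step the paper leaves implicit.
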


\noindent
\textbf{Organisation of the paper.} In Section 1 we give the necessary background in Galois cohomology and arithmetic subgroups. Section 2 is dedicated to recalling results from the author's previous paper \cite{Audibert_Zariskidensesurfacegroups}. We prove Theorem \ref{theoclassificationfuchsian} in Sections 3 and 4 for the odd and even dimensional case respectively. Finally, in Section 5 we prove Theorem \ref{theoprincipalthinHitchin}.\\

\noindent
\textbf{Notation.} Through out the whole paper, $F$ denotes an algebraic number field, $V_F$ its set of archimedean places and $\mathcal{O}_F$ its ring of integers
\\

\noindent
\textbf{Acknowledgement.} The author is grateful to Andrés Sambarino for his feedback on this article and for his guidance through the last three years, which this paper is the result of. The author thanks Xenia Flamm for careful reading of this article.

\tableofcontents

\section{Background}

\subsection{Galois cohomology}

Let $\mathcal{G}$ be a topological group acting continuously on a discrete group $M$ which is not necessarily abelian. A \emph{1-cocycle} is a continuous map $\zeta:\mathcal{G}\to M$ that satisfies
$$\zeta=\zeta(s)s(\zeta(t))$$ for all $s,t\in \mathcal{G}$.
We say that two 1-cocycles $\zeta$ and $\zeta'$ are \emph{equivalent} if there exists an element $m\in M$ such that $$\zeta'(s)=m^{-1}\zeta(s)s(m)$$ for all $s\in \mathcal{G}$. We denote by $\HH^1(\mathcal{G},M)$ the set of equivalence classes of $1$-cocycles and call it \emph{the first cohomology set of $\mathcal{G}$ (with coefficients in $M$)}. It is not a group in general. See \S 1.3.2 of Platonov and Rapinchuck's book \cite{Platonov_AlgebraicgroupsNumbertheory} for more details.

Let $L/F$ be a field extension and $\G$ be an $F$-algebraic group. An \emph{$L/F$-form of $\G$} is an $F$-algebraic group $\HH$ that is isomorphic to $\G$ over $L$. If $L$ is the separable closure of $F$, $L/F$-forms of $\G$ are also called \emph{$F$-forms of $\G$}.

Suppose $L/F$ is Galois. All $L/F$-forms define $1$-cocycles as follows. Let $\HH$ be an $L/F$-form of $\G$ and $\Phi:\HH\to\G$ be an $L$-defined isomorphism. The map
\begin{align*}
    \Gal(L/F)&\to\Aut(\G(L))\\
    \sigma&\mapsto\Phi\circ\sigma\circ\Phi^{-1}\circ\sigma^{-1}
\end{align*}
is a 1-cocycle.
\begin{theorem}[Theorem 2.9 in \cite{Platonov_AlgebraicgroupsNumbertheory}]
\label{theoremcocycleandforms}
Assuming $L/F$ is Galois, this defines a bijection between the set of isomorphism classes of $L/F$-forms of $\G$ and $\HH^1(\Gal(L/F),\Aut(\G(L)))$.
\end{theorem}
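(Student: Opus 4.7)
The plan is to construct a two-sided inverse to the cocycle map displayed just before the theorem statement, by proceeding in three stages: well-definedness, injectivity, and surjectivity. The first two are formal cocycle manipulations; surjectivity is the substantive point and relies on Galois descent.

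For well-definedness, I would take another $L$-defined isomorphism $\Phi':\HH\to\G$ and set $\alpha=\Phi'\circ\Phi^{-1}\in\Aut(\G(L))$. A direct computation shows that the cocycle $\zeta'(\sigma)=\Phi'\circ\sigma\circ(\Phi')^{-1}\circ\sigma^{-1}$ associated to $\Phi'$ satisfies $\zeta'(\sigma)=\alpha\,\zeta(\sigma)\,\sigma(\alpha^{-1})$, so it is cohomologous to $\zeta$. Hence the map sending $\HH$ to the class of $\zeta$ in $\HH^1(\Gal(L/F),\Aut(\G(L)))$ depends only on the $F$-isomorphism class of $\HH$, and clearly two $F$-isomorphic forms produce the same cocycle by composition of isomorphisms.

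For injectivity, suppose $\HH,\HH'$ are two $L/F$-forms with chosen $L$-isomorphisms $\Phi,\Phi'$ and with cohomologous cocycles $\zeta,\zeta'$, so there exists $m\in\Aut(\G(L))$ with $\zeta'(\sigma)=m^{-1}\zeta(\sigma)\,\sigma(m)$ for every $\sigma\in\Gal(L/F)$. Rewriting this identity, one checks that $\Psi:=(\Phi')^{-1}\circ m^{-1}\circ\Phi:\HH\to\HH'$ satisfies $\sigma\circ\Psi\circ\sigma^{-1}=\Psi$, i.e.\ is Galois-equivariant. By a standard Galois descent argument (an $L$-morphism of $F$-varieties fixed by $\Gal(L/F)$ descends to an $F$-morphism), $\Psi$ is defined over $F$, and it remains an isomorphism since it is one over $L$. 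Thus $\HH\cong\HH'$ as $F$-algebraic groups.

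For surjectivity, given a 1-cocycle $\zeta:\Gal(L/F)\to\Aut(\G(L))$ I would twist the Galois action on $\G(L)$ by setting $\sigma\star g:=\zeta(\sigma)(\sigma(g))$. The cocycle identity on $\zeta$ is precisely what is needed to ensure that $\star$ is again a continuous $\Gal(L/F)$-action by group automorphisms. Galois descent for affine algebraic groups (equivalent, at the level of coordinate rings, to faithfully flat descent for $L/F$) then yields an $F$-algebraic group $\HH$ together with an $L$-isomorphism $\Phi:\HH\to\G$ identifying $\HH(L)$ with $\G(L)$ equipped with the twisted action; by construction the cocycle associated to $\Phi$ is $\zeta$. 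The main obstacle I anticipate is precisely this descent step: one must justify that twisting by a continuous cocycle with values in $\Aut(\G(L))$ really produces an $F$-form, which uses that $\G$ is affine and that $\Aut(\G(L))$ acts by $L$-algebraic group automorphisms so the twisted action preserves the Hopf algebra structure on the coordinate ring, allowing Galois-fixed coordinates to define $\HH$ over $F$.
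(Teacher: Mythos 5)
The paper does not prove this statement; it cites it directly as Theorem 2.9 of Platonov--Rapinchuk and supplies no argument of its own. Your proposal is the standard Galois-descent proof (well-definedness and injectivity by a cocycle/coboundary computation, surjectivity by twisting the Galois action on the Hopf algebra and descending), which is essentially the argument given in the cited reference; the only caveat worth flagging is that $\Aut(\G(L))$ must be read as the group of $L$-algebraic automorphisms of $\G$ (as your descent step already requires), and for infinite Galois $L/F$ continuity of the cocycle is used precisely to reduce to a finite Galois subextension where descent applies.
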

For $\zeta\in\HH^1(\Gal(L/F),\Aut(\G(L)))$, we denote by $\prescript{}{\zeta}{\G}$ the corresponding $L/F$-form. Note that
$$\prescript{}{\zeta}{\G}(F)=\{g\in\G(L)\mid\zeta(\sigma)\circ\sigma(g)=g\ \forall\sigma\in\Gal(L/F)\}<\G(L).$$If a $1$-cocycle has values in $\Int(\G(L))$, then we say that this cocycle is \emph{inner}.

\begin{theorem}[Hilbert 90]
\label{Hilbert90}
Let $F$ be a number field. For any $n\geq1$ $$\HH^1(\Gal(\Qbar/F),\GL_n(\Qbar))=\{1\}.$$
\end{theorem}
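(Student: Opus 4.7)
The plan is to reduce the claim to finite Galois extensions and then construct an explicit coboundary by averaging. Since $\GL_n(\Qbar)$ carries the discrete topology, a continuous cocycle $\zeta\colon\Gal(\Qbar/F)\to\GL_n(\Qbar)$ must be trivial on $\Gal(\Qbar/L)$ for some finite Galois extension $L/F$, and moreover its image consists of matrices with entries in $L$ (the first because $\{1\}$ is open, the second because each of the finitely many values is fixed by an open subgroup). The cocycle identity then forces $\zeta$ to descend to a 1-cocycle $\Gal(L/F)\to\GL_n(L)$, and since the first cohomology sets pass to direct limits over the tower of finite Galois extensions, it suffices to show that $\HH^1(\Gal(L/F),\GL_n(L))=\{1\}$ for every finite Galois $L/F$.

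Fix such an $L/F$ and a cocycle $\zeta$. To trivialize $\zeta$ I must produce $m\in\GL_n(L)$ satisfying $\zeta(\sigma)=m\,\sigma(m)^{-1}$ for every $\sigma\in\Gal(L/F)$; the equivalence relation of the excerpt then gives $\zeta'(\sigma)=m^{-1}\zeta(\sigma)\sigma(m)=1$. The natural candidate is produced by the averaging operator
\[
b(c)=\sum_{\sigma\in\Gal(L/F)}\zeta(\sigma)\,\sigma(c),\qquad c\in\Mat_n(L).
\]
A direct calculation using $\zeta(\tau\sigma)=\zeta(\tau)\tau(\zeta(\sigma))$ and reindexing $\sigma\mapsto\tau\sigma$ yields $\tau(b(c))=\zeta(\tau)^{-1}b(c)$ for every $\tau$, so any choice of $c$ for which $b(c)\in\GL_n(L)$ furnishes the desired element $m=b(c)$.

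The main obstacle, and the heart of the proof, is to show that $c$ can be chosen so that $b(c)$ is invertible. I would argue this via Speiser's lemma (Galois descent for vector spaces): the formula $\sigma\cdot v=\zeta(\sigma)\sigma(v)$ defines a semilinear $\Gal(L/F)$-action on $L^n$ (the cocycle relation is exactly what makes this an action), and its $F$-fixed subspace $V^\zeta$ satisfies the descent isomorphism $L\otimes_F V^\zeta\xrightarrow{\sim} L^n$, so in particular $\dim_F V^\zeta=n$. The proof of this descent fact rests on the Dedekind--Artin linear independence of the distinct $F$-automorphisms of $L$, which ensures that the averaging map $c\mapsto b(c)$ hits every vector in $V^\zeta$. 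Choosing any $F$-basis $m_1,\ldots,m_n$ of $V^\zeta$ and assembling them as the columns of a matrix $m$ produces an element that is automatically in $\GL_n(L)$ (the columns are $L$-linearly independent by the descent isomorphism) and that satisfies $\zeta(\sigma)\sigma(m)=m$ column by column, i.e.\ $\zeta(\sigma)=m\sigma(m)^{-1}$. Passing to the direct limit over all finite Galois $L/F$ then gives $\HH^1(\Gal(\Qbar/F),\GL_n(\Qbar))=\{1\}$.
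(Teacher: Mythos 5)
The paper does not prove this statement itself; it only records it and points to Lemma~2.2 of Chapter~2 in \cite{Platonov_AlgebraicgroupsNumbertheory}, and your argument correctly reproduces exactly the proof given there: reduce to a finite Galois layer $L/F$ by continuity, form the averaging operator $b(c)=\sum_{\sigma\in\Gal(L/F)}\zeta(\sigma)\sigma(c)$, verify the twisted equivariance $\tau(b(c))=\zeta(\tau)^{-1}b(c)$, and invoke Speiser's descent lemma (built on Dedekind--Artin linear independence of characters) to choose $c$ with $b(c)\in\GL_n(L)$. Your resulting coboundary $\zeta(\sigma)=m\,\sigma(m)^{-1}$ is consistent with the paper's equivalence relation and corresponds to the paper's $S=m^{-1}$ in the remark following the theorem.
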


For any $1$-cocycle $\zeta:\Gal(\Qbar/F)\to\GL_n(\Qbar)$ the proof of Theorem Hilbert 90 gives an algorithm to construct a matrix $S\in\GL_n(\Qbar)$ that satisfies $\zeta(\sigma)=S^{-1}\sigma(S)$ for all $\sigma\in\Gal(\Qbar/F)$, see Lemma 2.2 in Chapter 2 of \cite{Platonov_AlgebraicgroupsNumbertheory}.

\subsection{Definition of arithmetic subgroups}

If $G$ is a Lie group, we denote by $\Ad:G\to G/Z(G)$ the adjoint representation.

\begin{definition}
Let $G$ be a semisimple Lie group with no compact factors and let $\Gamma$ be a lattice in $G$. We say that $\Gamma<G$ is an
\emph{arithmetic subgroup} if there exists a semisimple $\mathds{Q}$-algebraic group $\HH$ and a surjective homomorphism $$\phi:\HH(\mathds{R})_0\to \Ad(G_0)$$ with compact kernel such that $\phi(\HH(\mathds{Z})\cap \HH(\mathds{R})_0)$ and $\Ad(\Gamma\cap G_0)$ are commensurable.
\end{definition}

When $G$ is simple, all arithmetic subgroups arise from the following construction.

\begin{proposition}[Corollary 5.5.16 in \cite{Morris_IntroductionArithmeticGroups}]
\label{propGsimplearithmetic}
Let $G$ be a non compact simple Lie group and let $\Gamma$ be an arithmetic subgroup of $G$. Then there exists a number field $F$ and an $F$-algebraic group $\HH$ such that

\begin{itemize}
\setlength\itemsep{0cm}
    \item there is $v\in V_F$ and an isogeny $$\phi:\HH(F_v)_0\to\Ad(G_0)$$ such that $\phi(\HH(\mathcal{O}_F)\cap\HH(F_v)_0)$ is commensurable with $\Ad(\Gamma\cap G_0)$ and
    \item for all $w\in V_F$ not $v$, $\HH(F_w)$ is compact with the euclidean topology.
\end{itemize}
\end{proposition}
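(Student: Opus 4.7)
The plan is to reduce the arithmetic subgroup condition from $\mathds{Q}$ to a number field by an application of restriction of scalars, using that $G$ is simple to single out exactly one noncompact archimedean factor.

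First I would apply the definition of arithmetic subgroup directly to obtain a semisimple $\mathds{Q}$-algebraic group $\HH'$ together with a surjective homomorphism $\phi':\HH'(\mathds{R})_0 \to \Ad(G_0)$ with compact kernel, so that $\phi'(\HH'(\mathds{Z}) \cap \HH'(\mathds{R})_0)$ is commensurable with $\Ad(\Gamma \cap G_0)$. Next, decompose the adjoint group of $\HH'$ (up to isogeny) into a product of $\mathds{Q}$-simple factors $\HH'_1 \times \cdots \times \HH'_k$. Since $\Ad(G_0)$ is simple as an abstract group (because $G$ is simple and noncompact), and since $\ker \phi'$ is compact, exactly one of these factors, say $\HH'_1$, must map nontrivially (in fact with image of finite index) onto $\Ad(G_0)$; all other $\HH'_i(\mathds{R})_0$ are contained in the compact kernel, which forces $\HH'_i(\mathds{R})$ itself to be compact for $i \geq 2$. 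At this point I can discard the other factors, because the projection $\HH'(\mathds{Z}) \to \HH'_1(\mathds{Z})$ has image of finite index in $\HH'_1(\mathds{Z})$ (a standard consequence of the fact that $\HH'(\mathds{Z})$ is an irreducible lattice argument, or more directly: the kernel is an arithmetic subgroup of a compact group, hence finite).

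Now I would invoke the structure theorem for $\mathds{Q}$-simple algebraic groups: any $\mathds{Q}$-simple group arises as $\HH'_1 = \Res_{F/\mathds{Q}} \HH$ for some number field $F$ and some absolutely simple $F$-algebraic group $\HH$. The key computation is the identification
\begin{equation*}
\HH'_1(\mathds{R}) \;=\; \Res_{F/\mathds{Q}}\HH\,(\mathds{R}) \;=\; \prod_{w \in V_F} \HH(F_w),
\end{equation*}
together with the corresponding identification $\HH'_1(\mathds{Z}) = \HH(\mathcal{O}_F)$ coming from the definition of restriction of scalars (this is where $\mathcal{O}_F$ naturally appears). Under this decomposition, the restricted surjection $\phi'_1 : \HH'_1(\mathds{R})_0 \to \Ad(G_0)$ has compact kernel, and simplicity of $\Ad(G_0)$ forces exactly one archimedean place $v \in V_F$ to yield a noncompact factor $\HH(F_v)$ that maps onto $\Ad(G_0)$ via an isogeny $\phi$; every other place $w \neq v$ contributes a factor $\HH(F_w)$ swallowed by the compact kernel, hence compact in the euclidean topology.

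Finally I would translate the commensurability condition across these identifications: the image $\phi(\HH(\mathcal{O}_F) \cap \HH(F_v)_0)$ equals (up to finite index) $\phi'_1(\HH'_1(\mathds{Z}) \cap \HH'_1(\mathds{R})_0)$, which in turn is commensurable with $\phi'(\HH'(\mathds{Z}) \cap \HH'(\mathds{R})_0)$, hence commensurable with $\Ad(\Gamma \cap G_0)$. The main obstacle is really a bookkeeping one: justifying cleanly that passing from the original $\mathds{Q}$-group $\HH'$ to its single relevant $\mathds{Q}$-simple factor $\HH'_1$ preserves the commensurability class of the integer points after projection. This reduces to showing that the projection $\HH'(\mathds{Z}) \to \HH'_1(\mathds{Z})$ has finite-index image and finite kernel, which follows because its kernel is a discrete subgroup of the compact group $\prod_{i \geq 2}\HH'_i(\mathds{R})$ (hence finite) and its image is a lattice in $\HH'_1(\mathds{R})$ commensurable with $\HH'_1(\mathds{Z})$ by Borel--Harish-Chandra. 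Once this is in hand, everything assembles into the two bulleted conclusions.
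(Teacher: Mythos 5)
The paper does not prove this statement: it is quoted directly as Corollary 5.5.16 of Morris's textbook and used as a black box, so there is no in-paper argument to compare against. Your outline is correct and follows the standard route — pass to the unique $\mathds{Q}$-simple factor carrying the noncompact part (uniqueness following because if two noncompact factors both surjected onto $\Ad(G_0)$, the kernel of $\phi'$ would contain a copy of one of them, contradicting compactness), invoke the structure theorem realizing $\mathds{Q}$-simple groups as $\Res_{F/\mathds{Q}}\HH$ for an absolutely simple $F$-group $\HH$, use $\Res_{F/\mathds{Q}}\HH(\mathds{R})\simeq\prod_{w\in V_F}\HH(F_w)$ and $\Res_{F/\mathds{Q}}\HH(\mathds{Z})$ commensurable with $\HH(\mathcal{O}_F)$ to read off the single noncompact place $v$, and transfer commensurability via Borel--Harish-Chandra. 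One minor point worth stating explicitly is why the resulting map $\HH(F_v)_0\to\Ad(G_0)$ is an isogeny rather than merely having compact kernel: since $\HH$ is absolutely simple, $\HH(F_v)_0$ has simple Lie algebra, so any compact normal subgroup is discrete, hence finite. Otherwise the proposal assembles correctly.
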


For an examples of arithmetic groups, see Morris's book \cite{Morris_IntroductionArithmeticGroups} or \S \ref{subsectionexamplesarithmeticsubgroups}. The following proposition is a key ingredient of the proof of Theorem \ref{theoclassificationfuchsian}.

\begin{proposition}
\label{propmorphismofalgebraicgroups}
Let $f:\G\to\HH$ be a homomorphism of algebraic groups over a number field $F$. Up to commensurability, $f(\G(\mathcal{O}_F))<\HH(\mathcal{O}_F)$.
\end{proposition}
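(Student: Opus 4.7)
The plan is to realize both arithmetic groups via matrix embeddings and then run a straightforward congruence-subgroup argument. First I would fix closed embeddings $\G\hookrightarrow\GL_n$ and $\HH\hookrightarrow\GL_m$ defined over $F$, so that $\G(\mathcal{O}_F)=\G(F)\cap\GL_n(\mathcal{O}_F)$ and likewise for $\HH(\mathcal{O}_F)$. Via these embeddings $f$ becomes a morphism of affine varieties over $F$, and its matrix entries are expressible as polynomials in the coordinate functions $g_{ij}$ and $\det(g)^{-1}$, with finitely many coefficients lying in $F$. Clearing these denominators yields a nonzero $\Delta\in\mathcal{O}_F$ such that every such coefficient lies in $\frac{1}{\Delta}\mathcal{O}_F$.

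Next I would consider the principal congruence subgroup
\[
\G(\mathcal{O}_F,\Delta)=\ker\!\bigl(\G(\mathcal{O}_F)\to\GL_n(\mathcal{O}_F/\Delta)\bigr),
\]
which has finite index in $\G(\mathcal{O}_F)$. For any $g\in\G(\mathcal{O}_F,\Delta)$ write $g=I_n+\Delta h$ with $h\in\M_n(\mathcal{O}_F)$; also $\det(g)\in 1+\Delta\mathcal{O}_F$, hence (as $\det(g)\in\mathcal{O}_F^{\times}$) $\det(g)^{-1}\in 1+\Delta\mathcal{O}_F$ as well. Each monomial appearing in the polynomial expressions for $f(g)$ is either constant, contributing to $f(I_n)=I_m\in\M_m(\mathcal{O}_F)$, or non-constant, in which case substituting $g_{ij}=\delta_{ij}+\Delta h_{ij}$ and $\det(g)^{-1}=1+\Delta(\cdots)$ produces at least one factor of $\Delta$, which cancels the $1/\Delta$ in the coefficient. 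Therefore $f(g)\in\HH(F)\cap\M_m(\mathcal{O}_F)=\HH(\mathcal{O}_F)$.

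This establishes $f\bigl(\G(\mathcal{O}_F,\Delta)\bigr)\subset\HH(\mathcal{O}_F)$. Since $\G(\mathcal{O}_F,\Delta)$ has finite index in $\G(\mathcal{O}_F)$, its image has finite index in $f(\G(\mathcal{O}_F))$, so $f(\G(\mathcal{O}_F))$ is commensurable with the subgroup $f(\G(\mathcal{O}_F,\Delta))$ of $\HH(\mathcal{O}_F)$, which is what the statement asserts. The one point deserving attention is the treatment of $\det(g)^{-1}$, but the observation that $\G(\mathcal{O}_F,\Delta)$ maps into $1+\Delta\mathcal{O}_F$ under the determinant makes it behave just like the $g_{ij}$. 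Everything else is bookkeeping, and there is no serious obstacle.
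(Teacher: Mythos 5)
Your argument is essentially correct and is the standard direct congruence-subgroup argument, carried out directly over $\mathcal{O}_F$. The paper itself gives no proof: it cites Milne (Proposition 5.2 of Appendix A in \cite{Milne_LieAlgebrasAlgebraicGroupsLieGroups}) for $F=\mathds{Q}$ and suggests deducing the general case via restriction of scalars, whereas you bypass that reduction entirely by working with $\mathcal{O}_F$-coefficients and $\Delta$-congruence subgroups from the start; this is a legitimate and arguably cleaner route. Two small imprecisions are worth noting. First, your claim that each non-constant monomial of the defining polynomial, after substituting $g_{ij}=\delta_{ij}+\Delta h_{ij}$ and $\det(g)^{-1}=1+\Delta(\cdots)$, ``produces at least one factor of $\Delta$'' is not literally true: the monomial $g_{11}$, for instance, becomes $1+\Delta h_{11}$, whose constant summand $1$ carries no factor of $\Delta$. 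The correct bookkeeping is that for each entry $P$ of $f$, the difference $P(\delta+\Delta h,1+\Delta u)-P(\delta,1)$ has no constant term in $(h,u)$ and hence every one of its monomials does carry a factor of $\Delta$, cancelling the $\tfrac{1}{\Delta}$ in the coefficients; and the missing piece $P(\delta,1)$, summed over entries, is $f(I_n)=I_m\in\M_m(\mathcal{O}_F)$. Your text gestures at $f(I_n)=I_m$ but attaches it to the wrong place (the constant monomials of $P$ rather than the value of $P$ at $I$). Second, to conclude $f(g)\in\HH(\mathcal{O}_F)$ rather than merely $\HH(F)\cap\M_m(\mathcal{O}_F)$, you should also observe that $f(g)^{-1}=f(g^{-1})\in\M_m(\mathcal{O}_F)$ (since $g^{-1}$ is again in your congruence subgroup), whence $\det f(g)\in\mathcal{O}_F^{\times}$ and $f(g)\in\GL_m(\mathcal{O}_F)$. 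Both fixes are immediate and do not affect the structure of your argument.
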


The proof can be found in Milne \cite{Milne_LieAlgebrasAlgebraicGroupsLieGroups} Proposition 5.2 of Appendix A when $F=\mathds{Q}$. It can be adapted to any number field $F$ using the restriction of scalars (see \S 10.3 in \cite{Maclachlan_ArithmeticHyperbolic3Manifolds}).

\begin{proposition}
\label{propmorphismarithmeticlattices}
Let $F$ be an algebraic number field and pick $v\in V_F$. Let $\HH$ be a connected $F$-algebraic group and $G<\HH(F_v)$ be a Zariski-closed semisimple subgroup. Let $\Gamma<G$ be a lattice such that $\Gamma<\HH(\mathcal{O}_F)$. Then there exists an $F$-algebraic subgroup $\G$ of $\HH$ such that
\begin{itemize}
\setlength\itemsep{0cm}
    \item $\G(F_v)=G$,
    \item for all $w\in V_F$ not $v$, $\G(F_w)$ is compact with the euclidean topology and
    \item $\Gamma$ is commensurable with $\G(\mathcal{O}_F)$.
\end{itemize}
\end{proposition}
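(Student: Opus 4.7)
The plan is to take $\G$ to be the Zariski closure of $\Gamma$ inside $\HH$, viewed as an $F$-algebraic subgroup. Since $\Gamma\subset\HH(\mathcal{O}_F)\subset\HH(F)$, this Zariski closure is stable under $\Gal(\Qbar/F)$ and therefore inherits the structure of an $F$-algebraic subgroup $\G$ of $\HH$. After replacing $\G$ by its identity component and $\Gamma$ by the commensurable subgroup $\Gamma\cap\G(F_v)$, I may assume that $\G$ is connected.

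The first bullet, $\G(F_v)=G$, follows from Borel's density theorem: the lattice $\Gamma$ in the semisimple group $G$ (after passing to a finite-index subgroup to kill possible compact factors) is Zariski-dense in $G$. Since $G$ is Zariski-closed in $\HH(F_v)$ and contains $\Gamma$, the Zariski closure of $\Gamma$ in $\HH(F_v)$ is exactly $G$, matching $\G(F_v)$.

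For the compactness of $\G(F_w)$ at the other archimedean places $w\neq v$, the $F$-group $\G$ is semisimple (semisimplicity is a geometric property, inherited from $\G(F_v)=G$), so Borel--Harish-Chandra realizes $\G(\mathcal{O}_F)$ as a lattice in the real Lie group $\prod_{u\in V_F}\G(F_u)=G\times K$, where $K=\prod_{w\neq v}\G(F_w)$. The assumption that the sublattice $\Gamma$ is itself a lattice in the single factor $G$ forces $K$ to be compact: otherwise the standard irreducibility dichotomy for arithmetic lattices in semisimple products would make the projection $p_v(\G(\mathcal{O}_F))$ dense in $G$, which together with the existence of a discrete cofinite sublattice $\Gamma$ leads to a covolume contradiction (a fundamental domain for $\Gamma$ acting diagonally on $G\times K$ has the form $F_G\times K$ of infinite measure when $K$ is non-compact, while being a bounded multiple of the finite covolume of $\G(\mathcal{O}_F)$). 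Alternatively, one may invoke Proposition \ref{propGsimplearithmetic} applied to $\Gamma$ to produce an $F$-group with the required compactness property and then identify it with $\G$ inside $\HH$.

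Once $K$ is compact, the projection $p_v$ is proper, so $\G(\mathcal{O}_F)$ is a lattice in $G$ containing the lattice $\Gamma$; an inclusion of lattices in a connected semisimple Lie group is automatically of finite index, yielding the commensurability in the third bullet. The main obstacle is the compactness step: the Zariski-closure construction alone does not a priori rule out $\G(\mathcal{O}_F)$ being a genuine irreducible arithmetic lattice in a product $G\times K$ with $K$ non-compact, and translating the single-factor lattice hypothesis on $\Gamma$ into compactness at every other archimedean place of $F$ is the arithmetic heart of the proposition.
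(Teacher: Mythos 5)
Your construction of $\G$ as the Zariski closure of $\Gamma$ over $F$ and the identification $\G(F_v)=G$ via Borel density matches the paper, which cites the proof of Proposition~5.1.8 in \cite{Morris_IntroductionArithmeticGroups} for this step. The divergence is in the compactness step, and it is where your argument has a genuine gap. In the paper, compactness of $\G(F_w)$ for $w\neq v$ is immediate: $\G(F_w)$ is a closed subgroup of $\HH(F_w)$, which the paper takes to be compact. (This hypothesis on $\HH$ is not spelled out in the statement as printed, but it is explicitly used in the proof and it holds in every application, since the proposition is always applied to an ambient $\HH$ coming from Proposition~\ref{propGsimplearithmetic}.) You do not invoke this hypothesis and instead try to force compactness of $K=\prod_{w\neq v}\G(F_w)$ from the single-factor lattice hypothesis on $\Gamma$ via a covolume argument. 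That argument is circular: you assert that a fundamental domain for $\Gamma$ in $G\times K$ is ``a bounded multiple of the finite covolume of $\G(\mathcal{O}_F)$,'' but this presupposes that $\Gamma$ has finite index in $\G(\mathcal{O}_F)$, which is exactly the third bullet you are trying to prove. Without that, $\Gamma$ could sit with infinite index in $\G(\mathcal{O}_F)$ and there is no covolume comparison to run.

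Moreover, without the compactness hypothesis on $\HH$ at the places $w\neq v$, the proposition is simply false, so no amount of repair to the covolume argument can succeed. Take $F=\mathds{Q}(\sqrt{2})$, $\HH=\SL_2$ over $F$, $v$ one of the two real places, $G=\HH(F_v)=\SL(2,\mathds{R})$, and $\Gamma=\SL(2,\mathds{Z})\subset\SL(2,\mathcal{O}_F)$: all hypotheses in the printed statement hold, yet the only $F$-subgroup $\G$ of $\HH$ with $\G(F_v)=G$ is $\SL_2$ itself, which is not compact at the conjugate place and whose $\mathcal{O}_F$-points are not commensurable with $\SL(2,\mathds{Z})$. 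You rightly flag the compactness step as ``the arithmetic heart,'' but the correct reading is that compactness of $\HH(F_w)$ for $w\neq v$ must be assumed, after which the step is trivial, rather than derived from the lattice hypothesis on $\Gamma$. Your alternative suggestion of invoking Proposition~\ref{propGsimplearithmetic} does not patch this either: it requires $G$ simple and $\Gamma$ arithmetic, and it produces a number field and an $F'$-group a priori unrelated to $F$ and $\HH$, so a separate argument would be needed to match them.
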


\begin{proof}
The group $G\cap\HH(F)$ contains $\Gamma$ so is Zariski-dense in $G$. The proof of Proposition 5.1.8 in \cite{Morris_IntroductionArithmeticGroups} implies that there exists an $F$-algebraic subgroup $\G$ of $\HH$ satisfying $\G(F_v)=G$. For all $w\in V_F$ not $v$, $\G(F_w)$ is a closed subgroup of the compact group $\HH(F_w)$, hence is compact. Finally, since $\G(F_w)$ is compact for all archimedean places of $F$ except one, $\G(\mathcal{O}_F)$ is a lattice in $G$. It contains $\Gamma$ up to finite index which implies that it is commensurable to $\Gamma$.
\end{proof}

\subsection{Examples of arithmetic subgroups}
\label{subsectionexamplesarithmeticsubgroups}

We now present some examples of arithmetic subgroups that will appear later.

Let $F$ be a totally real number field and $\mathcal{O}_F$ its ring of integers. Let $d\in\mathcal{O}_F$ be positive at exactly one infinite place of $F$. Let $\sigma\in\Gal(F(\sqrt{d})/F)$ be non-trivial. Then
\begin{equation*}
    \SU(\I_n,\sigma;\mathcal{O}_F[\sqrt{d}])=\{\M\in\SL(n,\mathcal{O}_F[\sqrt{d}])\ |\ \sigma(\M)^\top \M= \I_n\}
\end{equation*} is an arithmetic subgroup of $\SL(n,\mathds{R})$.
It is non-uniform if and only if $F=\mathds{Q}$, see \S18.5 in \cite{Morris_IntroductionArithmeticGroups}.

\begin{definition}
A \emph{quaternion algebra} $A$ over a field $k$ is a 4-dimensional unital associative algebra which admits a basis $\{1,i,j,ij\}$ satisfying $i^2=a\in k$, $j^2=b\in k$ and $ij=-ji$.
\end{definition}

A quaternion algebra is uniquely determined by the choice of $a$ and $b$ and we denote it by $(a,b)_{k}$. For instance, the quaternion algebra $(-1,-1)_{\mathds{R}}$ is the Hamiltonian quaternions and is denoted by $\mathcal{H}$. Every quaternion algebra has a canonical involution called \emph{conjugation} and is defined by
$$\overline{\phantom{s}}:x_0+x_1i+x_2j+x_3ij\mapsto x_0-x_1i-x_2j-x_3ij.$$ The \emph{norm} of $A$ is $\Nrd:A\to k,\ x\mapsto x\overline{x}.$

Let $A$ be a quaternion algebra over a number field $F$. An \emph{order} of $A$ is a finitely generated $\mathcal{O}_F$-submodule of $A$ containing 1 which generates $A$ as a vector space and which is a subring of $A$. We say that $A$ \emph{splits} at a place $v$ of $F$ if $A\otimes_{F}F_v\simeq\M_2(F_v)$ and \emph{ramifies} at $v$ otherwise.

Let $A$ be a quaternion algebra over a totally real number field. Suppose that $A$ splits exactly at one real place $v$ of $F$. Let $\mathcal{O}$ be an order of $A$. Then $$\mathcal{O}^1=\{x\in\mathcal{O}\mid\Nrd(x)=1\}$$ is an arithmetic subgroup of $(A\otimes_{F}F_v)^1\simeq\SL_2(\mathds{R})$.

A matrix $\B\in\M_n(A)$ is said to be \emph{$\overline{\phantom{s}}$-Hermitian} if $\overline{\B}^\top=\B$, where the conjugation is applied to each entry of $\B$. Then
$$\SU(\B,\overline{\phantom{s}};\mathcal{O})=\{\M\in\SL(n,\mathcal{O})\mid\overline{\M}^\top\B\M=\B\}$$is an arithmetic subgroup of $\Sp(2n,\mathds{R})$. It is non-uniform if and only if $F=\mathds{Q}$, see \S18.5 in \cite{Morris_IntroductionArithmeticGroups}.

Let $d\in\mathcal{O}_F$ be positive exactly at the real place $v$. Let $\sigma\in\Gal(F(\sqrt{d})/F)$ be non-trivial. Denote by $$\sigma^*:\mathcal{O}\otimes_{\mathcal{O}_F}\mathcal{O}_F[\sqrt{d}]\to\mathcal{O}\otimes_{\mathcal{O}_F}\mathcal{O}_F[\sqrt{d}],\ x\otimes \lambda\to x\otimes\sigma(\lambda).$$ We also denote by $\sigma^*$ the involution induced by $\sigma$ on $\M_n(\mathcal{O}\otimes\mathcal{O}_F[\sqrt{d}])$ by applying $\sigma^*$ to each entry. Then
\begin{equation*}
    \SU(\I_n,\tc\otimes\sigma;\mathcal{O}\otimes\mathcal{O}_F[\sqrt{d}])=\{\M\in\SL(n,\mathcal{O}\otimes_{\mathcal{O}_F}\mathcal{O}_F[\sqrt{d}])\ |\ \sigma^*(\overline{\M})^\top\M=\I_n\}
\end{equation*}
is an arithmetic subgroup of $\SL(2n,\mathds{R})$. We emphasize that here the transposition is applied to $\M$ considered as an $n$-by-$n$ matrix.
It is non-uniform if and only if $F=\mathds{Q}$, see \S18.5 in \cite{Morris_IntroductionArithmeticGroups}.

Finally, we introduce a notation that was used in Table \ref{Table1}. Let $k\equiv0,3[4]$ and $F$ be any totally real number field with ring of integers $\mathcal{O}_F$. Let $q$ be non-degenerate quadratic form of rank $2k+1$ over $F$ that has trivial discriminant, trivial Hasse invariant at each finite place of $F$, which is positive definite at all real places of $F$ except one where it has signature equal to $(k+1,k)$ if $k\equiv0[4]$ or to $(k,k+1)$ if $k\equiv3[4]$. Denote by $$\Lambda_F=\SO(q,\mathcal{O}_F)$$ the subgroup of $\SO(k+1,k)$. It is a lattice of $\SO(k+1,k)$ which is non-uniform if and only if $F=\mathds{Q}$. Up to wide commensurability, it does not depend on the choice of the quadratic form.

\subsection{Classification of arithmetic subgroups}

Two subgroups of $G$ are said to be \emph{widely commensurable} if a conjugate of one is commensurable to the other.

\begin{lemma}
\label{lemmaelementnumberfield}
Let $F$ be a totally real number field and $V_F$ the set of its archimedean places. Fix a subset $V\subset V_F$. There exists $\lambda\in F$ such that for all $\iota\in V$, $\iota(\lambda)<0$ and for all $\iota\not\in V$ $\iota(\lambda)>0$.
\end{lemma}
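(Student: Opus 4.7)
The plan is to deduce the statement from weak approximation for the number field $F$ applied to its (finite) set of archimedean places, all of which are real since $F$ is totally real.

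First, I would recall the weak approximation theorem: for any number field $F$, the diagonal embedding $F\hookrightarrow\prod_{v\in V_F}F_v$ has dense image. Because $F$ is totally real, every completion $F_\iota$ for $\iota\in V_F$ is $\mathds{R}$, so this amounts to saying that $F$, sitting inside $\mathds{R}^{|V_F|}$ via the product of its real embeddings, is dense.

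Next, I would define a target vector encoding the desired sign pattern: set $x_\iota=-1$ if $\iota\in V$ and $x_\iota=+1$ if $\iota\in V_F\setminus V$. By weak approximation there exists $\lambda\in F$ such that $|\iota(\lambda)-x_\iota|<1/2$ for every $\iota\in V_F$. For such $\lambda$, the real number $\iota(\lambda)$ lies in $(-3/2,-1/2)$ when $\iota\in V$ and in $(1/2,3/2)$ otherwise, which gives exactly the required signs. Taking this $\lambda$ concludes the proof.

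There is essentially no obstacle here; this is a standard and very mild application of weak approximation. The only point where the hypotheses of the lemma are genuinely used is that $V_F$ is finite (which holds because $F$ is a number field) and that every place in $V_F$ is real (which is the definition of totally real), so that the notion of sign of $\iota(\lambda)$ is well defined and the product of completions is just a finite-dimensional real vector space.
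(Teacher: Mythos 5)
Your proof is correct, but it takes a different route from the paper's. You invoke weak approximation for the number field $F$ at its archimedean places, which immediately yields an element with prescribed sign pattern (in fact, prescribed approximate values). The paper instead gives an elementary, explicit construction: first for $V=\{\sigma\}$ it picks a primitive element $\alpha_0$ of $F$ with real conjugates $\alpha_0<\alpha_1<\cdots<\alpha_m$, chooses a rational $r$ strictly between $\alpha_0$ and $\alpha_1$, and sets $\lambda=\alpha_0-r$, which is negative at $\sigma$ and positive at all other places; for general $V$ it multiplies the corresponding single-place elements together. Your approach is shorter and relies on a well-known but somewhat heavier theorem, and it buys slightly more (control of the actual values, not just signs); the paper's approach is self-contained and uses only the primitive element theorem and basic ordering of real conjugates. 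Both are entirely valid proofs.
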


\begin{proof}
First suppose that $V=\{\sigma\}$. We use $\sigma$ to identify $F$ with a subfield of $\Qbar$. There is an element $\alpha_0\in\Qbar$ such that $F=\mathds{Q}(\alpha_0)$. Denote $\alpha_1,..,\alpha_m$ its conjugates. Up to relabeling, we can suppose that
$$\alpha_0<\alpha_1<...<\alpha_m.$$ Let $r\in\mathds{Q}$ such that $\alpha_0<r<\alpha_1$. Then we let $\lambda=\alpha_0-r$. Then $\lambda<0$ and the conjugates of $\lambda$ are $\alpha_i-r$ which are all positive.

Let $V$ be any subset of $V_F$. For all $\sigma\in V$ let $\lambda_{\sigma}$ be an element of $F$ such that $\sigma(\lambda_{\sigma})<0$ and for all $\iota\not\in V_F\setminus\{\sigma\}$, $\iota(\lambda_{\sigma})>0$. Let $$\lambda=\prod_{\sigma\in V}\lambda_{\sigma}.$$ Then for all $\iota\in V$ we have $\iota(\lambda)<0$ and for all $\iota\not\in V$ $\iota(\lambda)>0$.
\end{proof}

\begin{proposition}
\label{propclassificationlatticeSO}
Let $n=2k+1\geq5$ be odd. Uniform lattices of $\SO(k+1,k)$ are widely commensurable with $\SO(\B,\mathcal{O}_F)$ for $\mathcal{O}_F$ the ring of integers of a totally real number field $F\neq\mathds{Q}$ and $\B\in\SL(n,F)$ a symmetric matrix satisfying:
\begin{itemize}
\setlength\itemsep{0cm}
    \item the signature of $\B$ at one real place of $F$ is $(k+1,k)$ if $k$ is even and $(k,k+1)$ if $k$ is odd, and
    \item $\B$ is positive definite at all other real places of $F$.
\end{itemize}
\end{proposition}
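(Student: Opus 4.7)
The plan is to combine Margulis's arithmeticity theorem with Proposition~\ref{propGsimplearithmetic} to realize any uniform lattice $\Gamma$ of $\SO(k+1,k)$ as the integer points of the orthogonal group of a quadratic form $q$ over a totally real number field $F \neq \mathds{Q}$, and then rescale $q$ so that the prescribed signatures appear. Since $\SO(k+1,k)$ is a simple non-compact Lie group of real rank $k \geq 2$, Margulis's theorem makes $\Gamma$ arithmetic, and Proposition~\ref{propGsimplearithmetic} then supplies a number field $F$, an $F$-algebraic group $\HH$, an archimedean place $v \in V_F$, and an isogeny $\phi : \HH(F_v)_0 \to \Ad(\SO(k+1,k)_0)$ with $\phi(\HH(\mathcal{O}_F) \cap \HH(F_v)_0)$ commensurable with $\Ad(\Gamma \cap \SO(k+1,k)_0)$ and $\HH(F_w)$ compact at every other archimedean $w$. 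Since the Dynkin diagram $B_k$ has no nontrivial automorphism, every $F$-form of type $B_k$ is inner, and the standard classification of groups of type $B_k$ yields a non-degenerate quadratic form $q$ on $F^{2k+1}$ such that $\HH$ is $F$-isogenous to $\SO(q)$; Proposition~\ref{propmorphismofalgebraicgroups} then implies that $\Gamma$ is widely commensurable with $\SO(q,\mathcal{O}_F)$ inside $\SO(k+1,k)$.

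The place $v$ must be real because $\SO(k+1,k)_0$ is a non-compact real Lie group, and every other archimedean place $w$ is also real (otherwise $\HH(F_w)$ would contain a non-compact complex semisimple factor and thus fail to be compact), so $F$ is totally real. Moreover $F \neq \mathds{Q}$: if $F = \mathds{Q}$, then $q$ would be an indefinite rational quadratic form in $2k+1 \geq 5$ variables and hence isotropic by Meyer's theorem, making $\SO(q,\mathds{Z})$ non-uniform and contradicting the uniformity of $\Gamma$. Compactness of $\SO(q,F_w)$ at every real $w \neq v$ forces $q$ to be definite there, while at $v$ the signature of $q$ must be $(k+1,k)$ or $(k,k+1)$ in order for $\SO(q,F_v)$ to be isogenous to $\SO(k+1,k)$.

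To conclude, I invoke Lemma~\ref{lemmaelementnumberfield} to produce $\lambda \in F^*$ with any prescribed sign pattern at the real places of $F$, chosen so that $\lambda q$ is positive definite at every real place $w \neq v$ and has signature $(k+1,k)$ at $v$ when $k$ is even, or $(k,k+1)$ when $k$ is odd. Since $\SO(\lambda q) = \SO(q)$ as $F$-algebraic groups, the Gram matrix $\B$ of $\lambda q$ fulfils both bullet points of the proposition and $\SO(\B,\mathcal{O}_F) = \SO(q,\mathcal{O}_F)$ remains widely commensurable with $\Gamma$. The main technical obstacle I foresee is the careful propagation of wide commensurability of integer points through the full chain of reductions (the isogeny $\phi$, the $F$-isogeny $\HH \simeq \SO(q)$, and the scaling $q \mapsto \lambda q$), as each step moves between integer points of different but isogenous $F$-algebraic groups and thus requires invoking Proposition~\ref{propmorphismofalgebraicgroups} together with finiteness of isogeny kernels at every step.
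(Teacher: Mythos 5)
Your proof follows the same overall route as the paper: Margulis' arithmeticity, the classification of arithmetic subgroups of orthogonal groups (which you re-derive through Proposition~\ref{propGsimplearithmetic} together with the fact that type $B_k$ has only inner forms, while the paper simply cites \cite{Morris_IntroductionArithmeticGroups} \S18.5), Meyer's theorem for $F\neq\mathds{Q}$, and Lemma~\ref{lemmaelementnumberfield} to normalize the sign pattern. That part is correct.

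However, there is a gap: the proposition asserts $\B\in\SL(n,F)$, and you never arrange for $\det(\B)=1$. More importantly, this omission is not merely cosmetic, because the parity-dependent signature ``$(k+1,k)$ if $k$ is even and $(k,k+1)$ if $k$ is odd'' is not a free choice you get to make with $\lambda$ — it is \emph{forced} by the determinant normalization. Once $\B$ is positive definite at every real place $w\neq v$, the sign of $\det(\B)$ at $v$ is $(-1)^{(\text{number of negative eigenvalues at }v)}$. For $\det(\B)$ to be a square in $F$ (a prerequisite for scaling it to 1 by a congruence), the number of negative eigenvalues at $v$ must be even; since $n=2k+1$, this pins the signature at $v$ to $(k+1,k)$ when $k$ is even and $(k,k+1)$ when $k$ is odd. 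Concretely, after your rescaling by $\lambda$, the paper replaces $\B$ with $\det(\B)\B$ (which makes the new determinant $\det(\B)^{n+1}$, a square since $n$ is odd, and which flips the signature at $v$ precisely when $k$ is odd), and then applies one further congruence to reach determinant $1$. Your write-up ends before this step and presents the parity condition as something you chose, whereas it should be explained as a consequence of the $\SL(n,F)$ requirement.
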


\begin{proof}
Margulis' Arithmeticity Theorem (see Theorem 16.3.1 in \cite{Morris_IntroductionArithmeticGroups}) implies that all lattices of $\SO(k+1,k)$ are arithmetic. From the classification of arithmetic subgroups of \cite{Morris_IntroductionArithmeticGroups} \S18.5, lattices of $\SO(k+1,k)$ are widely commensurable with $\SO(\B,\mathcal{O}_F)$ for $\mathcal{O}_F$ the ring of integers of a totally real number field $F$ and $\B\in\GL(n,F)$ a symmetric matrix with signature $(k+1,k)$ at one real place of $F$ and such that $\B$ is positive or negative definite over all other real places of $F$.

If $F=\mathds{Q}$, Meyer's Theorem (Corollary 2 in \S3 of Chapter 4 in \cite{Serre_CourseArithmetic}) implies that $\B$ is isotropic over $\mathds{Q}$. Thus $\SO(\B,\mathds{Z})$ is non-uniform. Hence $F\neq\mathds{Q}$.

Let $V\subset V_F$ be the set of real places of $F$ where $\B$ is negative definite. By Lemma \ref{lemmaelementnumberfield}, there exists $\lambda\in F$ such that for all $\iota\in V$ $\iota(\lambda)<0$ and for all $\iota\not\in V$ $\iota(\lambda)>0$. Then $\lambda \B$ is positive definite for all real places except one where it has signature $(k+1,k)$ and $\SO(\B,\mathcal{O}_F)=\SO(\lambda \B,\mathcal{O}_F)$. Up to replacing $\B$ with $\lambda \B$ we can assume that $V$ is empty.

Up to replacing $\B$ with $\Det(\B)\B$, we can assume that $\Det(\B)$ is a square. Then up to congruence, we can assume $\B$ has determinant 1.
\end{proof}

\begin{proposition}
\label{propclassificationlatticeSp}
Let $n\geq2$. The uniform lattices of $\Sp(2n,\mathds{R})$ are widely commensurable with $\SU(\I_n,\tc;\mathcal{O})$ for $\mathcal{O}$ an order of a quaternion algebra $A$ over a totally real number field $F\neq\mathds{Q}$ such that $A$ splits at exactly one real place of $F$.
\end{proposition}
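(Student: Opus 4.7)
The plan is to follow the structure of the proof of Proposition \ref{propclassificationlatticeSO}. First, Margulis' Arithmeticity Theorem (Theorem 16.3.1 in \cite{Morris_IntroductionArithmeticGroups}) implies that every lattice of $\Sp(2n,\mathds{R})$ is arithmetic, and the classification of arithmetic subgroups of $\Sp(2n,\mathds{R})$ given in \cite{Morris_IntroductionArithmeticGroups} \S18.5 shows that any uniform lattice is widely commensurable with $\SU(\B,\tc;\mathcal{O})$ for some $\tc$-Hermitian matrix $\B\in\GL(n,A)$, some order $\mathcal{O}$ of a quaternion algebra $A$ over a totally real number field $F$ that splits at exactly one real place of $F$, with $\B$ definite at every real place where $A$ ramifies (otherwise the corresponding factor would fail to be compact). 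As recalled in \S\ref{subsectionexamplesarithmeticsubgroups}, when $F=\mathds{Q}$ the resulting lattice is non-uniform, so we must have $F\neq\mathds{Q}$.

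Next, I would normalize $\B$ in two steps. Let $V\subset V_F$ be the set of ramified real places of $F$ at which $\B$ is negative definite. Lemma \ref{lemmaelementnumberfield} produces $\lambda\in F$ such that $\lambda\B$ is positive definite at every ramified real place; since scaling by an element of $F$ does not change the unitary group, we may replace $\B$ by $\lambda\B$ and assume $\B$ is positive definite at each ramified real place. Then I would invoke the Hasse principle for $\tc$-Hermitian forms over a quaternion algebra with canonical involution (Landherr's theorem, see e.g. Scharlau's \emph{Quadratic and Hermitian Forms}): two such forms are equivalent over $F$ if and only if they have the same rank and the same signatures at each ramified real place. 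Since $\B$ and $\I_n$ both have rank $n$ and are positive definite at every ramified real place, there exists $P\in\GL(n,A)$ with $\overline{P}^\top\B P=\I_n$. Conjugation by $P$ inside $\GL(n,A\otimes_F F_v)$ at the split real place $v$ identifies $\SU(\B,\tc)$ and $\SU(\I_n,\tc)$ as $F$-algebraic groups, and by Proposition \ref{propmorphismofalgebraicgroups} the image of $\SU(\B,\tc;\mathcal{O})$ is commensurable with $\SU(\I_n,\tc;\mathcal{O}')$ for any order $\mathcal{O}'$ of $A$, which yields the claimed wide commensurability inside $\Sp(2n,\mathds{R})$.

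The main obstacle lies in the final step: reducing $\B$ all the way to $\I_n$ requires an input strictly stronger than the argument used for the orthogonal case, where only the determinant had to be normalized. The essential external result is the quaternionic analog of Hasse-Minkowski, and the delicate point is confirming that the local invariants of $\B$ at the non-archimedean places and at the split real place impose no obstruction beyond the rank, so that Landherr's theorem can indeed be applied to produce the conjugating matrix $P$.
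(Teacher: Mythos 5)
Your proof follows the same route as the paper's: Margulis' Arithmeticity Theorem, the classification from \cite{Morris_IntroductionArithmeticGroups} \S 18.5, scaling via Lemma \ref{lemmaelementnumberfield} to fix signatures at the ramified real places, and then the classification of $\tc$-Hermitian forms over a quaternion algebra by rank and signatures at ramified real places (the paper cites \cite{Lewis_IsometryClassificationHermitianForms} \S 5; you cite Landherr via Scharlau, which is the same result). Your concern in the final paragraph is resolved by that theorem itself: over a number field, for the canonical involution, rank and signatures at the ramified real places form a complete set of invariants, with no independent conditions at finite places or at the split real place, so the reduction of $\B$ to $\I_n$ requires no further verification.
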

\begin{proof}
Margulis' Arithmeticity Theorem (see Theorem 16.3.1. in \cite{Morris_IntroductionArithmeticGroups}) implies that all lattices of $\Sp(2n,\mathds{R})$ are arithmetic. From the classification of arithmetic subgroups of \cite{Morris_IntroductionArithmeticGroups} \S 18.5, uniform lattices of $\Sp(2n,\mathds{R})$ are widely commensurable with $\SU(\B,\tc;\mathcal{O})$, where $\mathcal{O}$ is an order of a quaternion division algebra $A$ over a totally real number field $F\neq\mathds{Q}$\footnote{Note that if $F=\mathds{Q}$ then the lattices in the classification are all non-uniform since $\tc$-Hermitian forms on a quaternion division algebra over $\mathds{Q}$ are always isotropic.}, $A$ ramifies at all $\iota\in V_F\setminus\{\sigma\}$ and $\B$ is a non-degenerate $\tc$-Hermitian matrix over $A$ such that for all $\iota\in V_F\setminus\{\sigma\}$ $\iota(\B)\in\GL(\mathcal{H})$ has signature $(n,0)$ or $(0,n)$. Denote by $$V_F^{+}=\{\iota\in V_F\ |\ \iota(\B)\ \textrm{has signature}\ (n,0)\}.$$ Let $\lambda\in F$ such that $\iota(\lambda)>0$ for all $\iota\in V_F^{+}$ and $\iota(\lambda)<0$ for all $\iota\not\in V_F^{+}$. It exists by Lemma \ref{lemmaelementnumberfield}. Then the $\tc$-Hermitian matrix
$\lambda\I_n$ has the same signature as $\B$ for all real embeddings of $F$, except maybe $\sigma$. As explained in \cite{Lewis_IsometryClassificationHermitianForms} \S 5, non-degenerate $\tc$-Hermitian forms on $A$ are classified by the set of signatures at all real embeddings of $F$ except $\sigma$. Hence $\SU(\B,\tc;\mathcal{O})$ is conjugate to $$\SU(\lambda\I_n,\tc;\mathcal{O})=\SU(\I_n,\tc;\mathcal{O}).$$\end{proof}

\section{Summary of results on forms}

Let $G$ be as in Table \ref{Table1}. To prove Theorem \ref{theoclassificationfuchsian}, we classify arithmetic subgroups of $G$ that contain the image of an arithmetic subgroup of $\SL(2,\mathds{R})$ under the irreducible representation $\tau_n$ (see  for the definition of $\tau_n$). Using Proposition \ref{propmorphismofalgebraicgroups}, it reduces to determine the $F$-algebraic groups with $\mathds{R}$-points $G$ that contain the image of an $F$-form of $\SL_2$ under $\tau_n$. Recall that forms of algebraic groups are classified using $1$-cocycle, see Theorem \ref{theoremcocycleandforms}.

This section is mainly a summary of results that were established in \cite{Audibert_Zariskidensesurfacegroups} in the case where $F=\mathds{Q}$ but for which the proof still works for any number field. Let $F$ be a totally real number field.
Let $$\xi:\GalF\to\PSL(2,\Qbar)=\Aut(\SL(2,\Qbar))$$ be a 1-cocycle.

\subsection{Compatible cocycles}

Let $F$ be a totally real number field.

\begin{definition}
Let $n\geq3$ and let $\G$ be an $F$-algebraic subgroup of $\SL_n$. We say that a 1-cocycle $\zeta:\GalF\to\Aut(\G(\Qbar))$ is \emph{$\tau_n$-compatible} with $\xi$ if
$$\tau_n(\prescript{}{\xi}{\SL_2}(F))<\prescript{}{\zeta}{\G}(F).$$
\end{definition}

Let $a,b\in\mathcal{O}_F$ and pick square roots $\sqrt{a},\sqrt{b}\in\Qbar$. Let $\T^{a,b}:\Gal(\Qbar/F)\to\PSL(2,\Qbar),\ \sigma\mapsto\T_{\sigma}^{a,b}$ with
\begin{equation*}
    \T_{\sigma}^{a,b}=\left\{\begin{array}{ll}
        \I_2 &\mbox{if $\sigma(\sqrt{a})=\sqrt{a}$ and $\sigma(\sqrt{b})=\sqrt{b}$}  \\
        \begin{pmatrix}
            1&0\\
            0&-1
        \end{pmatrix} &\mbox{if $\sigma(\sqrt{a})=\sqrt{a}$ and $\sigma(\sqrt{b})=-\sqrt{b}$}\\
        \begin{pmatrix}
        0&1\\
        1&0
        \end{pmatrix} &\mbox{if $\sigma(\sqrt{a})=-\sqrt{a}$ and $\sigma(\sqrt{b})=\sqrt{b}$}\\
        \begin{pmatrix}
        0&1\\
        -1&0
        \end{pmatrix} &\mbox{if $\sigma(\sqrt{a})=-\sqrt{a}$ and $\sigma(\sqrt{b})=-\sqrt{b}$.}
    \end{array}
    \right.
\end{equation*}We often write $\T_{\sigma}$ for $\T_{\sigma}^{a,b}$. The following lemma gives an explicit $1$-cocycle equivalent to $\xi$.

\begin{lemma}[Lemma 2.3 in \cite{Audibert_Zariskidensesurfacegroups}]
Let $\xi:\Gal(\Qbar/F)\to\PSL(2,\Qbar)$ be a $1$-cocycle. There exists $a,b\in\mathcal{O}_F$ and $\PP\in\SL(2,\Qbar)$ such that $$\xi(\sigma)=\PP^{-1}\T_{\sigma}^{a,b}\sigma(\PP)$$ for every $\sigma\in\Gal(\Qbar/F)$.
\end{lemma}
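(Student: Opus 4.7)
The plan is to use the bijection between cohomology classes and forms (Theorem \ref{theoremcocycleandforms}), together with the classification of $F$-forms of $\SL_2$ as unit groups of quaternion algebras, and then identify the cocycle $\T^{a,b}$ as the one naturally attached to $(a,b)_F$.

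First I would observe that $\Aut(\SL_2(\Qbar))=\PGL_2(\Qbar)=\PSL_2(\Qbar)$, so by Theorem \ref{theoremcocycleandforms} the class $[\xi]\in\HH^1(\GalF,\PSL_2(\Qbar))$ corresponds to an $F$-form $\prescript{}{\xi}{\SL_2}$ of $\SL_2$. It is classical that every $F$-form of $\SL_2$ has the shape $\SL_1(A)$ for a uniquely determined quaternion algebra $A$ over $F$; one way to see this is to let $A$ be the $F$-form of $\M_2$ corresponding to the image of $\xi$ under $\HH^1(\GalF,\PGL_2(\Qbar))\to\HH^1(\GalF,\Aut(\M_2(\Qbar)))$. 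Any quaternion algebra over $F$ can be presented as $(a',b')_F$ for some $a',b'\in F^\times$, and since $(a',b')_F\cong(c^2a',d^2b')_F$ for all $c,d\in F^\times$ I can clear denominators to obtain $a,b\in\mathcal{O}_F$ with $A\cong(a,b)_F$.

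Next I would verify that the cocycle $\T^{a,b}$ of the statement corresponds, via Theorem \ref{theoremcocycleandforms}, to this very quaternion algebra $(a,b)_F$. The cleanest way is to compute $\prescript{}{\T^{a,b}}{\SL_2}(F)$ directly: set $K=F(\sqrt{a},\sqrt{b})$, write a matrix $g=\left(\begin{smallmatrix}x&y\\z&w\end{smallmatrix}\right)\in\SL_2(\Qbar)$, and impose $\T^{a,b}_\sigma\sigma(g)(\T^{a,b}_\sigma)^{-1}=g$ for $\sigma$ ranging over representatives of $\Gal(K/F)$. The four cases force $g\in\SL_2(K)$, $x,w\in F(\sqrt{a})$, $y,z\in\sqrt{b}\,F(\sqrt{a})$, and $\sigma_b$ swapping $(x,w)$ and $(y,z)$; writing $x=p+q\sqrt{a}$, $y=\sqrt{b}(r+s\sqrt{a})$ with $p,q,r,s\in F$, the determinant relation $\det g=1$ becomes $p^2-aq^2-br^2+abs^2=1$, which is exactly the reduced-norm-one condition on the quaternion $p+qi+rj+sij\in(a,b)_F$. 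Hence $\prescript{}{\T^{a,b}}{\SL_2}\cong\SL_1((a,b)_F)$ as $F$-algebraic groups.

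Since $\xi$ and $\T^{a,b}$ then both classify the same $F$-form of $\SL_2$, Theorem \ref{theoremcocycleandforms} yields $[\xi]=[\T^{a,b}]$ in $\HH^1(\GalF,\PSL_2(\Qbar))$, i.e.\ there exists $\PP\in\PSL_2(\Qbar)$ with $\xi(\sigma)=\PP^{-1}\T^{a,b}_\sigma\,\sigma(\PP)$ for every $\sigma\in\GalF$. Lifting $\PP$ to $\SL_2(\Qbar)$ (every element of $\PSL_2(\Qbar)=\PGL_2(\Qbar)$ admits a lift of determinant $1$ over the algebraic closure) gives the claimed $\PP\in\SL(2,\Qbar)$.

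The only real obstacle is the bookkeeping in the third paragraph: one must check that the form $\prescript{}{\T^{a,b}}{\SL_2}$ really is $\SL_1((a,b)_F)$ and not some twist thereof. This is resolved by the explicit determinant computation just described, which pins down the reduced norm of $(a,b)_F$ on the nose. Everything else is a direct application of the cocycle-form correspondence.
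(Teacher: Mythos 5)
Your proof is correct and follows what is almost certainly the intended route: use the bijection of Theorem \ref{theoremcocycleandforms} to identify $[\xi]$ with an $F$-form of $\SL_2$, recall that every such form is $\SL_1(A)$ for a quaternion algebra $A\cong(a,b)_F$ (with $a,b$ cleared to lie in $\mathcal{O}_F$), verify by explicit computation that the cocycle $\T^{a,b}$ cuts out exactly $\SL_1((a,b)_F)$, and conclude $[\xi]=[\T^{a,b}]$. The determinant computation $p^2-aq^2-br^2+abs^2=\Nrd(p+qi+rj+sij)$ is the right check, and the lift of $\PP$ from $\PSL_2(\Qbar)$ to $\SL_2(\Qbar)$ using algebraic closedness is fine.

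One small bookkeeping slip: in the paragraph computing $\prescript{}{\T^{a,b}}{\SL_2}(F)$, it is the element of $\Gal(K/F)$ that negates $\sqrt{a}$ (and fixes $\sqrt{b}$) whose associated matrix $\left(\begin{smallmatrix}0&1\\1&0\end{smallmatrix}\right)$ swaps the pairs $(x,w)$ and $(y,z)$; the element negating $\sqrt{b}$ acts by $\left(\begin{smallmatrix}1&0\\0&-1\end{smallmatrix}\right)$ and is what forces $x,w\in F(\sqrt{a})$ and $y,z\in\sqrt{b}\,F(\sqrt{a})$. You have the two roles interchanged in words, though the ensuing parametrization $x=p+q\sqrt{a}$, $y=\sqrt{b}(r+s\sqrt{a})$, $w=p-q\sqrt{a}$, $z=\sqrt{b}(r-s\sqrt{a})$ and the determinant identity are right. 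You might also note explicitly that the degenerate cases (where $a$ or $b$ is a square, so $(a,b)_F\cong\M_2(F)$ and $\T^{a,b}$ partially trivializes) go through without change.
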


\begin{definition}
For all $n\geq3$ define $\J_n$ as
\begin{equation*}
    \begin{pNiceMatrix}
    &&&&(n-1)!\\
    &&&\Iddots&\\
    &&(-1)^{i-1}(n-i)!(i-1)!&&\\
    &\Iddots&&&\\
    (-1)^{n-1}(n-1)!&&&&&\\
\end{pNiceMatrix}.
\end{equation*}
\end{definition}

When $n$ is odd, it is a symmetric matrix of signature $(k+1,k)$ if $n=2k+1$ with $k$ even and $(k,k+1)$ if $n=2k+1$ with $k$ odd. When $n$ is even, $\J_n$ is antisymmetric. Note that it satisfies $\tau_n(\M)^\top\J_n\tau_n(\M)=\J_n$ for all $\M\in\SL(2,\mathds{C})$.
In the following proposition, we give an explicit description of the $1$-cocycles $\tau_n$-compatible with $\xi$. It will be used repeatedly in the next two sections.

\begin{proposition}[Proposition 2.6 in \cite{Audibert_Zariskidensesurfacegroups}]
\label{propcompatiblecocycles}
For any $n\geq3$, a 1-cocycle $\zeta:\GalF\to\Aut(\SL(n,\Qbar))$ is $\tau_n$-compatible with $\xi$ if and only if either
\begin{equation*}
    \zeta:\sigma\mapsto\Int(\tau_n(\PP^{-1}\T_{\sigma}^{a,b}\sigma(\PP)))
\end{equation*}
or there exists a quadratic field extension $F(\sqrt{d})$ of $F$ such that
\begin{equation*}
    \zeta:\sigma\mapsto\left\{\begin{array}{ll}
        \Int(\tau_n(\PP^{-1}\T_{\sigma}^{a,b}\sigma(\PP))) & \mbox{if $\sigma(\sqrt{d})=\sqrt{d}$} \\
        \Int(\tau_n(\PP^{-1}\T_{\sigma}^{a,b})\J_n^{-1})\circ\ \omega\circ\Int(\tau_n(\sigma(\PP))) & \mbox{if $\sigma(\sqrt{d})=-\sqrt{d}$}
    \end{array}\right.
\end{equation*}
where $\omega(\M)=(\M^\top)^{-1}$.
\end{proposition}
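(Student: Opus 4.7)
The plan is to exploit the structure of $\Aut(\SL(n,\Qbar))$: for $n\geq 3$ it is the semidirect product $\PGL(n,\Qbar)\rtimes\langle\omega\rangle$ with $\omega(\M)=(\M^\top)^{-1}$. Projecting any $1$-cocycle $\zeta$ onto the quotient $\{\pm 1\}$ yields a continuous homomorphism $\chi:\GalF\to\{\pm 1\}$ (it is a homomorphism rather than merely a cocycle because the action on the quotient is trivial). Such a $\chi$ is either trivial or factors through $\Gal(F(\sqrt{d})/F)$ for some $d\in\mathcal{O}_F$. This produces exactly the dichotomy in the statement, and it then remains, in each case, to determine the inner part of $\zeta$.

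First I would treat the purely inner case. Write $\zeta(\sigma)=\Int(c(\sigma))$ with $c(\sigma)\in\PGL(n,\Qbar)$. The condition $\tau_n(\prescript{}{\xi}{\SL_2}(F))\subset\prescript{}{\zeta}{\SL_n}(F)$ says that for every $\M\in\prescript{}{\xi}{\SL_2}(F)$ and every $\sigma$, $\zeta(\sigma)(\sigma(\tau_n(\M)))=\tau_n(\M)$. Using $\sigma(\M)=\xi(\sigma)^{-1}\M\xi(\sigma)$ and applying $\tau_n$, one obtains that $c(\sigma)\tau_n(\xi(\sigma))^{-1}$ centralizes $\tau_n(\SL(2,\Qbar))$ inside $\PGL_n$. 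Since $\tau_n$ is irreducible, Schur's lemma forces this centralizer to be trivial, so $c(\sigma)=\tau_n(\xi(\sigma))$ in $\PGL_n$. Substituting the explicit form $\xi(\sigma)=\PP^{-1}\T_{\sigma}^{a,b}\sigma(\PP)$ yields the first displayed formula, and the cocycle identity for $\zeta$ is inherited from the one for $\xi$.

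The outer case rests on the identity
$$\omega(\tau_n(\M))=\Int(\J_n)(\tau_n(\M))\quad\text{for all }\M\in\SL(2,\Qbar),$$
which follows from the relation $\tau_n(\M)^\top\J_n\tau_n(\M)=\J_n$ recalled just before the proposition: rearranging gives $(\tau_n(\M)^\top)^{-1}=\J_n\tau_n(\M)\J_n^{-1}$. For $\sigma$ with $\sigma(\sqrt{d})=-\sqrt{d}$, writing $\zeta(\sigma)=\Int(c(\sigma))\circ\omega$ and unpacking the compatibility condition as above, the same Schur's lemma argument -- applied after absorbing $\omega$ into $\Int(\J_n)$ on the image of $\tau_n$ -- pins down $c(\sigma)$ uniquely in $\PGL_n$. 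Substituting the explicit $\xi(\sigma)$ and tracking the Galois action on $\PP$ then produces the second formula.

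The main obstacle is verifying that the proposed $\zeta$ really satisfies the cocycle identity $\zeta(\sigma\tau)=\zeta(\sigma)\circ\sigma(\zeta(\tau))$ across the inner/outer boundary, i.e.~when $\sigma$ and $\tau$ lie in different cosets of $\ker\chi$. This amounts to bookkeeping how $\omega$ interacts with $\Int(\J_n^{-1})$ and with the entrywise Galois action, using that $\J_n$ is defined over $\QQ$ (hence Galois-fixed) and that $\omega$ commutes with Galois acting entrywise. Once these checks are carried out, the argument is formally identical to the $F=\QQ$ case proved in \cite{Audibert_Zariskidensesurfacegroups}, since the totally-real hypothesis on $F$ plays no role in the purely group-theoretic computation.
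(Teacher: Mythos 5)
Your approach is correct, and it is the natural one: since $\Aut(\SL(n,\Qbar))\simeq\PGL(n,\Qbar)\rtimes\langle\omega\rangle$ for $n\geq3$ with trivial Galois action on the quotient $\{\pm1\}$, the image of $\zeta$ in $\{\pm1\}$ is a homomorphism that is either trivial or cuts out a quadratic extension $F(\sqrt d)/F$, and then Schur's lemma (applied after observing that $\prescript{}{\xi}{\SL_2}(F)$ is Zariski-dense in $\SL_2(\Qbar)$, so its image under the irreducible $\tau_n$ has trivial centralizer in $\PGL_n$) pins the inner part of $\zeta$ down uniquely in each Galois coset. In the outer case the identity $\omega(\tau_n(\M))=\J_n\tau_n(\M)\J_n^{-1}$ converts the compatibility condition into the same Schur argument and yields $c(\sigma)=\tau_n(\xi(\sigma))\J_n^{-1}$, which after substituting $\xi(\sigma)=\PP^{-1}\T^{a,b}_\sigma\sigma(\PP)$ is exactly the displayed formula. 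The paper at hand does not reprove this statement (it cites Proposition 2.6 of the earlier paper and merely notes that the argument there is field-independent), so there is nothing to compare against line by line; your reconstruction matches the expected route.

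The one place you leave genuinely unfinished is the cocycle verification across the inner/outer boundary, which you flag but do not carry out. If you do carry it out, the computation in the outer-times-outer case produces the extra factor $\J_n^{-1}\J_n^\top$, and the verification goes through only because $\J_n^\top=(-1)^{n-1}\J_n$, so that this factor is the scalar $(-1)^{n-1}\I_n$ and hence trivial in $\PGL_n$. Worth recording explicitly, since it is the only point where the specific form of $\J_n$ (symmetric for $n$ odd, antisymmetric for $n$ even) enters the cocycle check; the other cases reduce to the cocycle identity for $\xi$ and to $\J_n$ being defined over $\QQ$, as you say.
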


\subsection{Odd-dimensional cocycles and quadratic forms}
\label{subsectionquadraticform}

Let $n=2k+1$ with $k\geq1$. Let $\zeta:\Gal(\Qbar/F)\to\Aut(\SL(n,\Qbar))$ be the inner $1$-cocycle $\tau_n$-compatible with $\xi$. Proposition \ref{propcompatiblecocycles} implies that $\zeta$ has value in $\SO(\J_n,\Qbar)<\PSL(n,\Qbar)$. Proposition 2.8 in \cite{Platonov_AlgebraicgroupsNumbertheory} states that the first cohomology set $\HH^1(\Gal(\Qbar/F),\SO(\J_n,\Qbar))$ is in $1$-$1$ correspondence with the $F$-equivalence classes of quadratic forms of dimension $n$ over $F$ with discriminant equal to $\det(\J_n)$. We exhibit symmetric a matrix $\J_n^{a,b}$ which represents a quadratic form in the equivalence class associated to $\zeta$. It satisfies $$\prescript{}{\zeta}{\SO}(\J_n)(F)\simeq\SO(\J_n^{a,b},F).$$

The computations come from the proof of Lemma 3.3 in \cite{Audibert_Zariskidensesurfacegroups} and use the algorithm of Theorem \ref{Hilbert90} (Hilbert 90). They were done in the case $F=\mathds{Q}$ but hold for any number field. We recall here the results. If $a$ or $b$ is a square, let $\J^{a,b}_n=\J_n$. Suppose neither $a$ nor $b$ is a square. If $n\equiv1[4]$ let $\J_n^{a,b}$ be the diagonal matrix defined by
\begin{equation*}
    (\J_n^{a,b})_{ii}=\left\{\begin{array}{ll}
        -2a(n-i)!(i-1)! & \mbox{if $i\leq k$ and $i$ is odd,} \\
        -2b(n-i)!(i-1)! & \mbox{if $i\leq k$ and $i$ is even,} \\
        k!k! & \mbox{if $i=k+1$,} \\
        2ab(n-i)!(i-1)! & \mbox{if $i\geq k+2$ and $i$ is even,} \\
        2(n-i)!(i-1)! & \mbox{if $i\geq k+2$ and $i$ is odd.}
    \end{array}\right.
\end{equation*} If $n\equiv3[4]$ let $\J_n^{a,b}$ be the diagonal matrix defined by
\begin{equation*}
    (\J_n^{a,b})_{ii}=\left\{\begin{array}{ll}
        -2b(n-i)!(i-1)! & \mbox{if $i\leq k$ and $i$ is odd,}\\
        2(n-i)!(i-1)! & \mbox{if $i\leq k$ and $i$ is even,}\\
        -ak!k! & \mbox{if $i=k+1$,}\\
        -2a(n-i)!(i-1)! & \mbox{if $i\geq k+2$ and $i$ is even,}\\
        2ab(n-i)!(i-1)! & \mbox{if $i\geq k+2$ and $i$ is odd.}
    \end{array}\right.
\end{equation*}

\subsection{Inner cocycles and quaternion algebras}

For every $1$-cocycle $\zeta:\Gal(\Qbar/F)\to\PSL(n,\Qbar)$ define $$\prescript{}{\zeta}{\M_n(F)}=\{\M\in\M_n(\Qbar)\mid\zeta(\sigma)\sigma(\M)\zeta(\sigma)^{-1}=\M\ \forall\sigma\in\Gal(\Qbar/F)\}.$$ It satisfies $\prescript{}{\zeta}{\M_n(F)}\otimes_{F}\Qbar\simeq\M_n(\Qbar)$.

\begin{proposition}[Proposition 2.8 in \cite{Audibert_Zariskidensesurfacegroups}]
\label{propinnerformMn}
Let $\zeta:\Gal(\Qbar/F)\to\PSL(n,\Qbar)$ be a $1$-cocycle $\tau_n$-compatible with $\xi$. Then
\begin{equation*}
    \prescript{}{\zeta}{\M_n(F)}\simeq\left\{\begin{array}{ll}
        \M_n(F) & \mbox{if $n$ is odd} \\
        \M_{\frac{n}{2}}(\prescript{}{\xi}{\M_2(F)}) & \mbox{if $n$ is even.}
    \end{array}\right.
\end{equation*}
\end{proposition}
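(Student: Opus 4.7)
The plan is to identify the Brauer class of the central simple $F$-algebra $\prescript{}{\zeta}{\M_n(F)}$ using the naturality of the connecting homomorphism in nonabelian Galois cohomology, together with the defining form of $\zeta$ given in Proposition \ref{propcompatiblecocycles}. By Skolem--Noether, $\Aut(\M_n(\Qbar))=\PGL_n(\Qbar)$, so $\prescript{}{\zeta}{\M_n(F)}$ is an $F$-form of $\M_n$, i.e.\ a central simple $F$-algebra of degree $n$, and its class in $\Br(F)=\HH^2(\GalF,\Qbar^*)$ is the image of $[\zeta]$ under the connecting map
$$\delta_n:\HH^1(\GalF,\PGL_n(\Qbar))\longrightarrow\HH^2(\GalF,\Qbar^*)$$
coming from $1\to\Qbar^*\to\GL_n\to\PGL_n\to1$. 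Since $\PSL(n,\Qbar)=\PGL(n,\Qbar)$, Proposition \ref{propcompatiblecocycles} tells us that $[\zeta]$ is the image of $[\xi]$ under the map on $\HH^1$ induced by $\tau_n:\PSL_2\to\PGL_n$. The dichotomy between the two cases is driven by the identity $\tau_n(-\I_2)=(-1)^{n-1}\I_n$.

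When $n$ is odd, $\tau_n$ kills the center $\mu_2\subset\SL_2$, so it factors through a morphism $\PSL_2\to\SL_n\hookrightarrow\GL_n$. Consequently $\zeta$ lifts to a $1$-cocycle with values in $\GL_n(\Qbar)$, and so $\delta_n([\zeta])=0$, proving $\prescript{}{\zeta}{\M_n(F)}\simeq\M_n(F)$. Explicitly, the element $\tau_n(\PP^{-1}\T_\sigma^{a,b}\sigma(\PP))$ is unambiguously defined in $\SL(n,\Qbar)$ (the sign ambiguity in lifting $\T_\sigma^{a,b}$ to $\SL_2$ is absorbed by $\tau_n$) and gives a genuine $\GL_n$-valued cocycle; Theorem \ref{Hilbert90} (Hilbert 90) then produces $S\in\GL(n,\Qbar)$ with $\tau_n(\PP^{-1}\T_\sigma^{a,b}\sigma(\PP))=S^{-1}\sigma(S)$, so $M\mapsto SMS^{-1}$ identifies $\prescript{}{\zeta}{\M_n(F)}$ with $\M_n(F)$.

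When $n$ is even, $\tau_n(-\I_2)=-\I_n$, so $\tau_n$ sends $\mu_2\subset\SL_2$ isomorphically onto the subgroup $\{\pm1\}\subset\Qbar^*$, yielding the commutative diagram of short exact sequences
$$
\begin{array}{ccccccccc}
1 & \to & \mu_2 & \to & \SL_2 & \to & \PSL_2 & \to & 1 \\
  &     & \downarrow\iota & & \downarrow\tau_n & & \downarrow\overline{\tau_n} & & \\
1 & \to & \Qbar^* & \to & \GL_n & \to & \PGL_n & \to & 1.
\end{array}
$$
Naturality of the connecting map gives $\delta_n([\zeta])=\iota_*\delta_2([\xi])$, where $\delta_2:\HH^1(\PSL_2)\to\HH^2(\mu_2)$ is the boundary of the top row. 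The same diagram in the special case $n=2$ (with $\overline{\tau_n}$ replaced by the identity) identifies $\iota_*\delta_2([\xi])$ with the Brauer class of $\prescript{}{\xi}{\M_2(F)}$. Hence $\prescript{}{\zeta}{\M_n(F)}$ and $\prescript{}{\xi}{\M_2(F)}$ have the same Brauer class. Since $\M_{n/2}(\prescript{}{\xi}{\M_2(F)})$ is a central simple $F$-algebra of degree $n$ with this same Brauer class, Wedderburn's classification forces $\prescript{}{\zeta}{\M_n(F)}\simeq\M_{n/2}(\prescript{}{\xi}{\M_2(F)})$.

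The main technical hurdle is the diagram chase in the even case: one must verify Galois-equivariance throughout and carefully check that the two distinct boundary maps (for the $\mu_2$- and $\Qbar^*$-sequences) are compatible via the inclusion $\iota$. Once this naturality is in hand, the conclusion is immediate from the structure theory of central simple algebras, and no explicit computation with the matrices $\T_\sigma^{a,b}$ is needed.
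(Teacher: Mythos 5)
Your proof is correct. However, note that the paper you were given does not actually prove this proposition: it is cited verbatim from the author's earlier paper \cite{Audibert_Zariskidensesurfacegroups}, so there is no ``paper's own proof'' here to compare against line by line.

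That said, your argument is clean and conceptually tighter than the flavour of the surrounding material. You identify $\prescript{}{\zeta}{\M_n(F)}$ as a degree-$n$ central simple algebra, observe that its Brauer class is $\delta_n([\zeta])$, and use the identity $\tau_n(-\I_2)=(-1)^{n-1}\I_n$ to drive the dichotomy: for $n$ odd the cocycle lifts to $\SL_n\subset\GL_n$ (and the sign ambiguity in choosing a lift of $\T_\sigma^{a,b}$ is genuinely absorbed, so the lift is a bona fide $\GL_n$-valued cocycle), killing the Brauer obstruction; for $n$ even the commutative ladder of short exact sequences and naturality of the connecting map push $\delta([\xi])$ forward to $\delta_n([\zeta])$, and the auxiliary $n=2$ diagram identifies the latter with the class of $\prescript{}{\xi}{\M_2(F)}$; Wedderburn then finishes. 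All steps are Galois-equivariant (the map $\bar\tau_n$ is defined over $\mathds{Q}$, $\iota\colon\mu_2\hookrightarrow\Qbar^*$ is Galois-trivial), so the diagram chase is legitimate. By contrast, the proof machinery visible in the present paper is explicitly computational: in \S\ref{subsectionstandardcocycle} the author replaces $\zeta$ by an equivalent ``standard'' cocycle $\eta$ via an explicit matrix $\PP$ coming from the Hilbert 90 algorithm, and then reads off $\prescript{}{\eta}{\M_{2n}(F)}\simeq\M_n(A)$ by inspecting $2\times 2$ block shapes. Your route avoids any such matrix manipulation. The explicit computation does give extra mileage later in the paper (concrete generators, identification of the Hermitian form $\J_{2n}^*$), which your abstract argument alone would not provide, but for the statement of Proposition~\ref{propinnerformMn} itself your proof is complete and sound.
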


\section{Arithmetic subgroups in odd dimension}
\label{Sectionarithmeticodd}

In this section we prove Theorem \ref{theoclassificationfuchsian} for the case $G=\SO(k+1,k)$, $\textbf{G}_2(\mathds{R})$ and $\SL(2k+1,\mathds{R})$.

\subsection{Arithmetic subgroups of $\SO(k+1,k)$}
\label{subsectionlatticeSO}

Let $\mathcal{P}$ be a prime ideal of $\mathcal{O}_F$ and $a,b\in F^{\times}$. We denote by $(a,b)_{\mathcal{P}}$ the quaternion algebra $(a,b)_{F_{\mathcal{P}}}$ where $F_\mathcal{P}$ is the completion of $F$ at the place $\mathcal{P}$. We write $1$ for quaternion algebra $\M_2(F_{\mathcal{P}})$.

\begin{proposition}
\label{proplatticeSO}
Let $\Gamma$ be an arithmetic subgroup of $\SL(2,\mathds{R})$. Let $F$ be a totally real number field and $a,b\in F^{\times}$ such that $\Gamma$ is commensurable with the norm $1$ elements of an order of $(a,b)_F$.

Let $n=2k+1\geq3$ be odd. Then $\tau_n(\Gamma)$ lies in a subgroup of $\SO(\J_n,\mathds{R})$ widely commensurable with $\SO(\B,\mathcal{O}_F)$ for $B\in\SL(n,F)$ a symmetric matrix such that
\begin{itemize}
\setlength\itemsep{0cm}
    \item $\B$ has signature equal to $(k+1,k)$ is $k$ is even and $(k,k+1)$ if $k$ is odd at one real place of $F$,
    \item $\B$ is positive definite at all other real places of $F$ and
    \item $\B$ has Hasse invariant
\begin{equation*}
    \mathcal{E}_\mathcal{P}(\B)=\left\{\begin{array}{ll}
        1 & \mbox{if $n\equiv\pm1[8]$} \\
        (a,b)_{\mathcal{P}}\otimes(-1,-1)_{\mathcal{P}} & \mbox{if $n\equiv\pm3[8]$}
    \end{array}\right.
\end{equation*}
for every prime ideal $\mathcal{P}$ of $\mathcal{O}_F$.
\end{itemize}

Furthermore this is the only arithmetic subgroup of $\SO(\J_n,\mathds{R})$ that contain $\tau_n(\Gamma)$ up to commensurability.
\end{proposition}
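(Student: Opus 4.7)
The plan is to translate the data on $\Gamma$ into Galois cohomology, use Proposition \ref{propcompatiblecocycles} to produce an ambient $F$-form of $\SO(\J_n)$ containing $\tau_n(\Gamma)$, and then read off its invariants. First, since $\Gamma$ is commensurable with the norm-$1$ elements of an order in $(a,b)_F$, a suitable conjugate of $\Gamma$ is commensurable with $\prescript{}{\xi}{\SL_2}(\mathcal{O}_F)$, where the $F$-form $\prescript{}{\xi}{\SL_2}$ corresponds via Theorem \ref{theoremcocycleandforms} to the cocycle $\xi(\sigma)=\PP^{-1}\T_\sigma^{a,b}\sigma(\PP)$ for some $\PP\in\SL(2,\Qbar)$.

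Second, I invoke Proposition \ref{propcompatiblecocycles} to pick the inner $\tau_n$-compatible cocycle
$$\zeta(\sigma)=\Int\bigl(\tau_n(\PP^{-1}\T_\sigma^{a,b}\sigma(\PP))\bigr).$$
Since $\tau_n$ preserves the bilinear form $\J_n$, the cocycle $\zeta$ takes values in $\SO(\J_n,\Qbar)$, so $\prescript{}{\zeta}{\SO(\J_n)}$ is an $F$-form of $\SO(\J_n)$ containing $\tau_n(\prescript{}{\xi}{\SL_2})$. The explicit Hilbert~90 computation recalled in \S\ref{subsectionquadraticform} yields $\prescript{}{\zeta}{\SO(\J_n)}(F)\simeq\SO(\J_n^{a,b},F)$. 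Applying Proposition \ref{propmorphismofalgebraicgroups} to the $F$-morphism $\tau_n:\prescript{}{\xi}{\SL_2}\to\prescript{}{\zeta}{\SO(\J_n)}$ places $\tau_n(\Gamma)$, up to commensurability, inside $\SO(\J_n^{a,b},\mathcal{O}_F)$; rescaling $\J_n^{a,b}$ by a suitable $\lambda\in F^\times$ chosen as in the proof of Proposition \ref{propclassificationlatticeSO} (via Lemma \ref{lemmaelementnumberfield}) produces the desired $\B\in\SL(n,F)$ with $\SO(\B,\mathcal{O}_F)$ widely commensurable with a subgroup of $\SO(\J_n,\mathds{R})$ containing $\tau_n(\Gamma)$.

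Third, I read the signature and Hasse invariant off the diagonal form $\J_n^{a,b}=\langle d_1,\dots,d_n\rangle$. At each real place $v$ of $F$, the signs of $v(a)$ and $v(b)$ determine the signs of the $d_i$; since $(a,b)_F$ splits at exactly the real place that gives $\Gamma\subset\SL(2,\mathds{R})$ and ramifies at all other real places (forcing $v(a),v(b)<0$ there), a direct count yields signature $(k+1,k)$ or $(k,k+1)$ at the split place (according to the parity of $k$) and positive definiteness elsewhere. For the Hasse invariant at a finite prime $\mathcal{P}$, I expand $\mathcal{E}_\mathcal{P}(\B)=\prod_{i<j}(d_i,d_j)_\mathcal{P}$, group the factors according to their $a$- and $b$-content, and apply bilinearity of the Hilbert symbol together with the identity $(x,x)_\mathcal{P}=(-1,x)_\mathcal{P}$. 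The combinatorics depend only on $n\bmod 8$ and collapse to the stated formula. This Hilbert-symbol bookkeeping is the main technical obstacle, though it involves only routine identities.

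Finally, for uniqueness: in odd dimension, $\SO(2k+1)$ has trivial outer automorphism group, so every $F$-form of $\SO(\J_n)$ containing $\tau_n(\prescript{}{\xi}{\SL_2})$ corresponds to an inner cocycle $\tau_n$-compatible with $\xi$. By Proposition \ref{propcompatiblecocycles}, that cocycle is uniquely determined by $\xi$ up to equivalence, so by Proposition \ref{propmorphismarithmeticlattices} any arithmetic subgroup of $\SO(\J_n,\mathds{R})$ containing $\tau_n(\Gamma)$ is widely commensurable with $\SO(\B,\mathcal{O}_F)$.
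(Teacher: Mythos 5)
Your proposal follows the same Galois-cohomological roadmap as the paper for the forward direction: set up the $1$-cocycle $\xi$ from the quaternion algebra data, take the inner $\tau_n$-compatible cocycle via Proposition~\ref{propcompatiblecocycles}, use the Hilbert~90 computation of \S\ref{subsectionquadraticform} to identify the form with $\SO(\J_n^{a,b},F)$, and invoke Proposition~\ref{propmorphismofalgebraicgroups} to place $\tau_n(\Gamma)$ inside $\SO(\J_n^{a,b},\mathcal{O}_F)$. This part is correct, apart from a harmless but unnecessary detour: the rescaling by $\lambda$ via Lemma~\ref{lemmaelementnumberfield} is not needed, since the explicit diagonal entries of $\J_n^{a,b}$ given in \S\ref{subsectionquadraticform} are already all positive at every ramified real place (where $a,b<0$). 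You also leave the Hasse invariant verification as ``routine bookkeeping''; the strategy is right, but the paper actually carries it out, using in particular that $\prod_{j=1,3,\ldots}j(n-j)$ is a square, so this computation should not be waved away.

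The real gap is in your uniqueness argument. You observe that $\SO_{2k+1}$ has no outer automorphisms and conclude that ``every $F$-form of $\SO(\J_n)$ containing $\tau_n(\prescript{}{\xi}{\SL_2})$ corresponds to an inner cocycle $\tau_n$-compatible with $\xi$.'' That statement is essentially the definition of $\tau_n$-compatibility, so it carries no content. What you actually need, and have not shown, is that an arbitrary arithmetic subgroup $\Lambda<\SO(\J_n,\mathds{R})$ containing the discrete group $\tau_n(\Gamma)$ must arise from a cocycle $\eta$ whose form contains $\tau_n(\prescript{}{\xi}{\SL_2}(F))$ as an algebraic subgroup. Passing from the group-theoretic inclusion $\tau_n(\Gamma)<\Lambda$ to the algebraic inclusion requires an argument: the paper handles it by setting, for each $\sigma\in\Gal(\Qbar/F)$, the twisted map $\tau_n^\sigma=\eta(\sigma)\circ\sigma\circ\tau_n\circ(\xi(\sigma)\circ\sigma)^{-1}$, noting it is an algebraic morphism agreeing with $\tau_n$ on a finite-index (hence Zariski-dense) subgroup of $\Gamma$, and concluding $\tau_n^\sigma=\tau_n$. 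Without this step your uniqueness claim does not close.
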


\begin{proof}
Let $\xi:\GalF\to\PSL(2,\Qbar),\ \sigma\mapsto\T^{a,b}_{\sigma}.$
It is a $1$-cocycle such that $\Gamma$ is widely commensurable with $\prescript{}{\xi}{\SL_2}(\mathcal{O}_F)$. Define
\begin{equation*}
    \zeta:\GalF\to\SO(\J_n,\Qbar),\ \sigma\mapsto\tau_n(\T_\sigma^{a,b}).
\end{equation*}
It satisfies $\tau_n(\prescript{}{\xi}{\SL_2(F)}<\prescript{}{\zeta}{\SO(\J_n)(F)}$.
Proposition \ref{propmorphismofalgebraicgroups} shows that $\tau_n(\Gamma)$ lies in $\prescript{}{\zeta}{\SO(\J_n)}(\mathcal{O}_F)$ up to wide commensurability, which is widely commensurable with $\SO(\J_n^{a,b},\mathcal{O}_F)$, see \S\ref{subsectionquadraticform}.

If $n\equiv1[4]$ then its Hasse invariant at a prime ideal $\mathcal{P}$ of $\mathcal{O}_F$ is
\begin{equation*}
    (-1,-1)^{\frac{n-1}{4}}_{\mathcal{P}}\otimes\bigotimes_{j=1,3,\ldots,k-1}(a,bj(n-j))_{\mathcal{P}}\simeq(-1,-1)^{\frac{n-1}{4}}_{\mathcal{P}}\otimes(a,b^{\frac{n-1}{4}})_{\mathcal{P}},
\end{equation*}
since $\prod_{j=1,3,\ldots,k-1}j(n-j)$ is a square as can be shown by induction.

If $n\equiv3[4]$ then its Hasse invariant at a prime ideal $\mathcal{P}$ of $\mathcal{O}_F$ is
\begin{equation*}
    (-1,-1)_{\mathcal{P}}^{\frac{n+1}{4}}\otimes(a,2)_{\mathcal{P}}\otimes\bigotimes_{j=1,3,\ldots,k}(a,bj(n-j))_{\mathcal{P}}\simeq(-1,-1)_{\mathcal{P}}^{\frac{n+1}{4}}\otimes(a,b^{\frac{n+1}{4}})_{\mathcal{P}},
\end{equation*}
since $2\prod_{j=1,3,\ldots,k}j(n-j)$ is a square as can be shown by induction.

Conversely, suppose that an $\Lambda$ is an arithmetic subgroup of $\SO(\J_n,\mathds{R})$ that contains $\tau_n(\Gamma)$. Since $\SO(\J_n,\mathds{R})$ is simple, $\Lambda$ is widely commensurable with $\prescript{}{\eta}{\SO(\J_n)}(\mathcal{O}_L)$ for $L$ a number field and $\eta:\Gal(\Qbar/L)\to\SO(\J_n,\overline{\mathds{Q}})$ a $1$-cocycle. By Proposition \ref{propmorphismarithmeticlattices} we can assume that $L=F$.

We show that $\eta$ is $\tau_n$-compatible with $\xi$. For every $\sigma\in\Gal(\Qbar/F)$ denote by
$$\tau_n^{\sigma}:\SL_2(\Qbar)\to\SO(\J_n,\Qbar),\ g\mapsto\eta(\sigma)\circ\sigma\circ\tau_n\circ(\xi(\sigma)\circ\sigma)^{-1}(g).$$
This is an algebraic morphism that coincides with $\tau_n$ on a finite-index subgroup of $\Gamma$. Since any finite-index subgroup of $\Gamma$ is Zariski-dense in $\SL_2(\Qbar)$, $\tau_n=\tau_n^{\sigma}$. This means that $\tau_n(\prescript{}{\xi}{\SL_2}(F))<\prescript{}{\eta}{\SO(\J_n)}(F)$. Proposition \ref{propcompatiblecocycles} concludes the proof.
\end{proof}

\subsection{Arithmetic subgroups of $\textbf{G}_2(\mathds{R})$}

By $\textbf{G}_2(\mathds{R})$ we denote the connected centerless split real Lie group of type $\G_2$.

\begin{definition}
\label{definitionG2}
Let $R$ be a ring and let $a,b\in R$ be non-zero. Denote by $\times:R^7\times R^7\rightarrow R^7$
\begin{equation*}
    \begin{pmatrix}
    x_1\\
    x_2\\
    x_3\\
    x_4\\
    x_5\\
    x_6\\
    x_7
    \end{pmatrix},
    \begin{pmatrix}
    y_1\\
    y_2\\
    y_3\\
    y_4\\
    y_5\\
    y_6\\
    y_7
    \end{pmatrix}\mapsto
    \begin{pmatrix}
    6a(x_7y_4-x_4y_7)-4(x_2y_3-x_3y_2)-4a(x_6y_5-x_5y_6)\\
    24b(x_3y_1-x_1y_3)+24ab(x_7y_5-x_5y_7)-6a(x_6y_4-x_4y_6)\\
    60(x_2y_1-x_1y_2)+60a(x_7y_6-x_6y_7)-6a(x_5y_4-x_4y_5)\\
    240b(x_1y_7-x_7y_1)+40(x_2y_6-x_6y_2)-16b(x_3y_5-x_5y_3)\\
    60(x_1y_6-x_6y_1)-60(x_7y_2-x_2y_7)-6(x_3y_4-x_4y_3)\\
    24b(x_3y_7-x_7y_3)+24b(x_1y_5-x_5y_1)-6(x_2y_4-x_4y_2)\\
    6(x_1y_4-x_4y_1)-4(x_2y_5-x_5y_2)-4(x_6y_3-x_3y_6)
    \end{pmatrix}.
\end{equation*}
Define $\textbf{G}_2^{a,b}(R)=\{\M\in\SO(\J_7^{a,b},R)|\M(x\times y)=\M x\times\M y,\ \forall x,y\in R^7\}$.
\end{definition}

For any $a,b\in F^{\times}$ the $F$-algebraic group $\textbf{G}_2^{a,b}$ is simple of type $\G_2$.\footnote{Indeed it is conjugate over $\Qbar$ to $\textbf{G}_2(\mathds{R})$ as defined in Definition 3.6 in \cite{Audibert_Zariskidensesurfacegroups} using the matrix $\SSS$ defined in equation \eqref{equationmatrixSG2}.} For any $a,b\in\mathds{R}^{\times}$ which are both negative $\textbf{G}_2^{a,b}(\mathds{R})$ is isomorphic to the compact Lie group of type $\G_2$. If either $a$ or $b$ is positive, $\textbf{G}_2^{a,b}(\mathds{R})\simeq\textbf{G}_2(\mathds{R})$.

\begin{proposition}
\label{proplatticeG2}
Let $\Gamma$ be an arithmetic subgroup of $\SL(2,\mathds{R})$. Let $F$ be a totally real number field and $a,b\in F^{\times}$ such that $\Gamma$ is commensurable with the norm 1 elements of an order of $(a,b)_F$. Then $\tau_7(\Gamma)$ lies in a subgroup of \emph{$\textbf{G}_2(\mathds{R})$} widely commensurable with \emph{$\textbf{G}_2^{a,b}(\mathcal{O}_F)$}. Furthermore the latter is the only arithmetic subgroup of \emph{$\textbf{G}_2(\mathds{R})$} that contains $\tau_7(\Gamma)$ up to commensurability.
\end{proposition}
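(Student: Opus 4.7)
The plan is to adapt the proof of Proposition \ref{proplatticeSO} to $\G_2$, using the fact that $\G_2$ has trivial center and trivial outer automorphism group, so every $1$-cocycle into $\Aut(\textbf{G}_2(\Qbar))$ is inner and takes values in $\textbf{G}_2(\Qbar)$ itself.

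First I would set $\xi:\GalF\to\PSL(2,\Qbar)$, $\sigma\mapsto\T_\sigma^{a,b}$, so that $\Gamma$ is widely commensurable with $\prescript{}{\xi}{\SL_2}(\mathcal{O}_F)$, the norm-$1$ units of an order of $(a,b)_F$. Then define $\zeta:\GalF\to\textbf{G}_2(\Qbar)$ by $\zeta(\sigma)=\tau_7(\T_\sigma^{a,b})$; this lands in $\textbf{G}_2(\Qbar)$ because $\tau_7$ is the principal $\SL_2$-embedding into $\G_2$. By Proposition \ref{propcompatiblecocycles} (first case, with $\PP=\I_2$), $\zeta$ is $\tau_7$-compatible with $\xi$, so $\tau_7(\prescript{}{\xi}{\SL_2}(F))<\prescript{}{\zeta}{\textbf{G}_2}(F)$, and Proposition \ref{propmorphismofalgebraicgroups} gives that $\tau_7(\Gamma)$ lies in $\prescript{}{\zeta}{\textbf{G}_2}(\mathcal{O}_F)$ up to commensurability.

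The next step is to identify the twisted form $\prescript{}{\zeta}{\textbf{G}_2}$ with $\textbf{G}_2^{a,b}$. The Hilbert~90 computation recalled in \S\ref{subsectionquadraticform} produces an explicit matrix $\SSS\in\SL_7(\Qbar)$ conjugating $\J_7$ to $\J_7^{a,b}$ and realising an $F$-defined isomorphism $\prescript{}{\zeta}{\SO(\J_7)}\simeq\SO(\J_7^{a,b})$. The same $\SSS$ transports $\prescript{}{\zeta}{\textbf{G}_2}$ onto the subgroup of $\SO(\J_7^{a,b})$ stabilising the pushforward of the standard cross product, which by the footnote following Definition \ref{definitionG2} agrees with the cross product fixed in that definition. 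Hence $\prescript{}{\zeta}{\textbf{G}_2}\simeq\textbf{G}_2^{a,b}$ over $F$, and $\tau_7(\Gamma)$ lies in a subgroup of $\textbf{G}_2(\mathds{R})$ widely commensurable with $\textbf{G}_2^{a,b}(\mathcal{O}_F)$.

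For the converse, let $\Lambda$ be an arithmetic subgroup of $\textbf{G}_2(\mathds{R})$ containing $\tau_7(\Gamma)$. Margulis arithmeticity together with Proposition \ref{propmorphismarithmeticlattices} gives that $\Lambda$ is widely commensurable with $\prescript{}{\eta}{\textbf{G}_2}(\mathcal{O}_F)$ for some $1$-cocycle $\eta$, automatically with values in $\Aut(\textbf{G}_2(\Qbar))=\textbf{G}_2(\Qbar)$. The Zariski-density argument at the end of the proof of Proposition \ref{proplatticeSO} (using that any finite-index subgroup of $\Gamma$ is Zariski-dense in $\SL_2$) shows that $\eta$ is $\tau_7$-compatible with $\xi$. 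Composing with the inclusion $\textbf{G}_2(\Qbar)\hookrightarrow\PSL(7,\Qbar)\subset\Aut(\SL(7,\Qbar))$, well-defined because $\textbf{G}_2$ has trivial center, turns $\eta$ into an \emph{inner} cocycle of $\SL_7$ that is $\tau_7$-compatible with $\xi$. Since the second case of Proposition \ref{propcompatiblecocycles} takes outer values (through the factor $\omega$) whenever $\sigma(\sqrt{d})=-\sqrt{d}$, only the first case can apply, and $\eta(\sigma)=\Int(\tau_7(\PP^{-1}\T_\sigma^{a,b}\sigma(\PP)))$ for some $\PP\in\SL_2(\Qbar)$. Pulled back to $\textbf{G}_2$, this identifies $\prescript{}{\eta}{\textbf{G}_2}$ with $\textbf{G}_2^{a,b}$ over $F$, so $\Lambda$ is widely commensurable with $\textbf{G}_2^{a,b}(\mathcal{O}_F)$. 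I expect the main obstacle to be the concrete identification of $\prescript{}{\zeta}{\textbf{G}_2}$ with $\textbf{G}_2^{a,b}$: one must verify that the matrix $\SSS$ which twists $\SO(\J_7)$ simultaneously carries the principal-$\SL_2$ cross product into the explicit normalization fixed in Definition \ref{definitionG2}.
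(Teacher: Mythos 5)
Your proposal follows essentially the same route as the paper: same cocycles $\xi$ and $\zeta$, same use of Hilbert~90 to produce the explicit matrix $\SSS$, same identification $\prescript{}{\zeta}{\textbf{G}_2}\simeq\textbf{G}_2^{a,b}$, and the same converse via Zariski-density plus uniqueness of the compatible inner cocycle.

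The one place you sidestep the work is the concrete verification that $\SSS$ simultaneously carries the cross product preserved by $\tau_7(\SL_2)$ onto the cross product fixed in Definition~\ref{definitionG2}. You defer this to the footnote following Definition~\ref{definitionG2}, but that footnote is exactly the assertion that this conjugation holds; in the text it is not proven independently of the computation you are trying to avoid, so invoking it does not close the gap so much as relabel it. The paper does the work: it writes out the cross product preserved by $\tau_7(\prescript{}{\xi}{\SL_2}(F))$, verifies this on $\tau_7(\SL(2,\mathds{Z}))$ and extends by Zariski density, and then checks that $\SSS$ carries it to the one of Definition~\ref{definitionG2} (just as it checks $\SSS^{-\top}\J_7\SSS^{-1}=\J_7^{a,b}$). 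You flag this as the expected obstacle, which is exactly right; the step does have to be carried out.

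Your converse argument, embedding $\Aut(\textbf{G}_2(\Qbar))=\textbf{G}_2(\Qbar)$ into $\Inn(\SL_7(\Qbar))$ to rule out the outer case of Proposition~\ref{propcompatiblecocycles}, is a slight rephrasing of the paper's appeal (in step \ref{proplatticeSO}-style) to the fact that there is a unique compatible cocycle with values in $\textbf{G}_2(\Qbar)$; the two are equivalent. One small point: the matrix $\PP$ in the first case of Proposition~\ref{propcompatiblecocycles} is not free but is the one furnished by Lemma~2.2 for the fixed $\xi$, here $\PP=\I_2$; since equivalent cocycles give isomorphic forms this does not affect the conclusion, but the phrasing ``for some $\PP\in\SL_2(\Qbar)$'' is looser than intended.
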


\begin{remark}
The subgroup $\textbf{G}_2^{a,b}(\mathcal{O}_F)$ is an arithmetic subgroup of $\textbf{G}_2(\mathds{R})$. Indeed, for all embeddings $\sigma:F\to\mathds{R}$ 
except one, $\textbf{G}_2^{\sigma(a),\sigma(b)}(\mathds{R})$ is compact. Moreover all arithmetic subgroups of $\textbf{G}_2(\mathds{R})$ are of this form, as we will see in Proposition \ref{propclassificationlatticeG2}.
\end{remark}

\begin{proof}
Let $\xi:\Gal(\overline{\mathds{Q}}/F)\to\PSL(2,\overline{\mathds{Q}}),\ \sigma\mapsto\T^{a,b}_{\sigma}$. It is a $1$-cocycle such that $\prescript{}{\xi}{\SL_2(\mathcal{O}_F)}$ is widely commensurable with $\Gamma$. Let $\zeta:\Gal(\overline{\mathds{Q}}/F)\to\textbf{G}_2(\overline{\mathds{Q}}),\ \sigma\mapsto\tau_7(\T^{a,b}_\sigma)$.
It satisfies $\tau_7(\prescript{}{\xi}{\SL_2(F)})<\prescript{}{\zeta}{\textbf{G}_2(F)}.$

We need to determine $\prescript{}{\zeta}{\textbf{G}_2(F)}$. Hilbert's 90, Theorem \ref{Hilbert90}, shows that there exists $\SSS\in\GL_7(\Qbar)$ such that for all $\sigma\in\Gal(\Qbar/F)$
$\zeta(\sigma)=\SSS^{-1}\sigma(\SSS).$ As computed in the proof of Lemma 3.3 in \cite{Audibert_Zariskidensesurfacegroups} we can take
\begin{equation}
\label{equationmatrixSG2}
\SSS=\frac{1}{2}\begin{pNiceMatrix}
\frac{1}{\sqrt{b}}&&&&&&-\frac{1}{\sqrt{b}}\\
&1&&&&-1&\\
&&\Ddots&&\Iddots&&\\
&&&\frac{2}{\sqrt{a}}&&&\\
&&\Iddots&&\Ddots&&\\
&\frac{1}{\sqrt{a}}&&&&\frac{1}{\sqrt{a}}&\\
\frac{1}{\sqrt{ab}}&&&&&&\frac{1}{\sqrt{ab}}
\end{pNiceMatrix}.
\end{equation}

Hence for $\M\in\prescript{}{\xi}{\SL_2(F)}$, $\SSS\tau_7(\M)\SSS^{-1}\in\SL(7,F)$. It appears that $\tau_7(\prescript{}{\xi}{\SL_2(F)})$ preserves the quadratic form $\J_7.$ Hence $\SSS\tau_7(\prescript{}{\xi}{\SL_2(F)})\SSS^{-1}$ preserves the quadratic form $\SSS^{-\top}\J_n\SSS^{-1}=\J_n^{a,b}.$ Also $\tau_7(\prescript{}{\xi}{\SL_2(F)})$ preserves the following cross product:
\begin{equation*}
    \begin{pmatrix}
    x_1\\
    x_2\\
    x_3\\
    x_4\\
    x_5\\
    x_6\\
    x_7
    \end{pmatrix},
    \begin{pmatrix}
    y_1\\
    y_2\\
    y_3\\
    y_4\\
    y_5\\
    y_6\\
    y_7
    \end{pmatrix}\mapsto
    \begin{pmatrix}
    6(x_1y_4-x_4y_1)-4(x_2y_3-x_3y_2)\\
    24(x_1y_5-x_5y_1)-6(x_2y_4-x_4y_2)\\
    60(x_1y_6-x_6y_1)-6(x_3y_4-x_4y_3)\\
    120(x_1y_7-x_7y_1)+20(x_2y_6-x_6y_2)-8(x_3y_5-x_5y_3)\\
    60(x_2y_7-x_7y_2)-6(x_4y_5-x_5y_4)\\
    24(x_3y_7-x_7y_3)-6(x_4y_6-x_6y_4)\\
    6(x_4y_7-x_7y_4)-4(x_5y_6-x_6y_5)
    \end{pmatrix}.
\end{equation*} Indeed, one can check that $\tau_7(\SL(2,\mathds{Z}))$ preserves this cross product and use the fact that $\SL(2,\mathds{Z})$ is Zariski-dense in $\SL(2,\Qbar)$. It implies that $\SSS\tau_7(\prescript{}{\xi}{\SL_2(F)})\SSS^{-1}$ preserves $\times$ as defined in Definition \ref{definitionG2}. It follows that
$$\SSS\tau_7(\prescript{}{\xi}{\SL_2(F)})\SSS^{-1}<\textbf{G}_2^{a,b}(F).$$
Since there is a unique $1$-cocycle $\tau_7$-compatible with $\xi$ that has values in $\textbf{G}_2(\Qbar)$, see Proposition \ref{propcompatiblecocycles}, there is only one $F$-form of $\textbf{G}_2(\Qbar)$ that contains $\tau_7(\prescript{}{\xi}{\SL_2(F)})$. We deduce that $\prescript{}{\zeta}{\textbf{G}_2(F)}\simeq\textbf{G}_2^{a,b}(F)$. Proposition \ref{propmorphismofalgebraicgroups} shows that $\tau_7(\Gamma)$ is contained in $\prescript{}{\zeta}{\textbf{G}_2(\mathcal{O}_F)}\simeq\textbf{G}_2^{a,b}(\mathcal{O}_F)$ up to finite index.

The converse statement is proven as in the proof of Proposition \ref{proplatticeSO}.
\end{proof}

Finally, we conclude this part by the classification of lattices of $\textbf{G}_2(\mathds{R})$ as we will need it for the proof of Theorem \ref{theoclassificationfuchsian}. It is surely widely known.

\begin{proposition}
\label{propclassificationlatticeG2}
All lattices of $\emph{\textbf{G}}_2(\mathds{R})$ are widely commensurable with $\emph{\textbf{G}}^{a,b}_2(\mathcal{O}_F)$ where $F$ is a totally real number field and $a,b\in F^{a,b}$ are such that for all embeddings $\sigma:F\to\mathds{R}$ except one $\sigma(a)$ and $\sigma(b)$ are negative.
\end{proposition}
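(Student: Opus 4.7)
The plan is to combine Margulis arithmeticity with the classification of $F$-forms of $\textbf{G}_2$ by octonion algebras. First, since $\textbf{G}_2(\mathds{R})$ has real rank $2$, Margulis' Arithmeticity Theorem (Theorem 16.3.1 in \cite{Morris_IntroductionArithmeticGroups}) guarantees that every lattice $\Lambda$ of $\textbf{G}_2(\mathds{R})$ is arithmetic. Then Proposition \ref{propGsimplearithmetic} supplies a number field $F$ and an $F$-algebraic group $\mathbf{H}$ of type $\G_2$ with $\mathbf{H}(F_v)\simeq\textbf{G}_2(\mathds{R})$ at exactly one archimedean place $v$, $\mathbf{H}(F_w)$ compact at every other archimedean place, and $\Lambda$ widely commensurable with $\mathbf{H}(\mathcal{O}_F)$. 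The compactness at all infinite places but one forces $F$ to be totally real, and reduces the problem to identifying the $F$-form $\mathbf{H}$.

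Next, I would invoke the classification of $F$-forms of $\textbf{G}_2$ via octonion algebras. Since $\textbf{G}_2$ is centerless and has trivial outer automorphism group, $\Aut(\textbf{G}_2)=\textbf{G}_2$, and an $F$-form of $\textbf{G}_2$ is the automorphism group of a unique octonion algebra $\mathcal{O}$ over $F$. By the classical theorem of van der Blij and Springer, octonion algebras over number fields satisfy the Hasse principle, and at every non-archimedean place only the split octonion algebra exists; at each real place there are exactly two isomorphism classes, distinguished by whether the norm form is positive-definite. Thus $\mathbf{H}$ is determined, up to $F$-isomorphism, by the subset of real places at which it is compact, which here is $V_F\setminus\{v\}$.

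To finish, I would realize $\mathbf{H}$ concretely as $\textbf{G}_2^{a,b}$. The group $\textbf{G}_2^{a,b}$ is the automorphism group of the octonion algebra produced by Cayley--Dickson doubling from the quaternion algebra $(a,b)_F$, and at a real place $\sigma$ this doubling produces the Cayley octonions (positive-definite norm) exactly when $(a,b)_\sigma$ is the Hamiltonian algebra $\mathcal{H}$, i.e. when $\sigma(a)<0$ and $\sigma(b)<0$. Applying Lemma \ref{lemmaelementnumberfield} twice, I pick $a,b\in F^{\times}$ with $\sigma(a)<0$ and $\sigma(b)<0$ at every archimedean place except $v$, and $v(a),v(b)>0$. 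Then $\textbf{G}_2^{a,b}$ and $\mathbf{H}$ have matching local behavior at every place of $F$, so the Hasse principle gives $\mathbf{H}\simeq\textbf{G}_2^{a,b}$ over $F$, whence $\Lambda$ is widely commensurable with $\textbf{G}_2^{a,b}(\mathcal{O}_F)$.

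The main obstacle is the classification of $F$-forms of $\textbf{G}_2$ by octonion algebras together with the Hasse principle of van der Blij--Springer, a classical but non-trivial input from the arithmetic theory of composition algebras. A secondary verification is that the explicit cross product in Definition \ref{definitionG2} coincides with the imaginary part of the octonion multiplication on the Cayley--Dickson algebra attached to $(a,b)_F$, which is what makes $\textbf{G}_2^{a,b}$ precisely the automorphism group of that octonion algebra, and what makes the local compactness criterion $\sigma(a),\sigma(b)<0$ correct.
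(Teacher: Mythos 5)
Your proof is correct and follows essentially the same route as the paper: Margulis arithmeticity, then Proposition \ref{propGsimplearithmetic} to produce an $F$-algebraic group $\mathbf{H}$ of type $\G_2$ compact at all real places but one, then the classification of $F$-forms of $\textbf{G}_2$ by octonion algebras, and finally the identification of $\textbf{G}_2^{a,b}$ as the automorphism group of the Cayley--Dickson double of $(a,b)_F$. The only cosmetic difference is in the last step: the paper quotes that octonion algebras are determined by their norm forms (Springer--Veldkamp, Theorem 1.7.1) and then matches the norm form to $\J_7^{a,b}$, while you invoke the van der Blij--Springer Hasse principle for octonion algebras directly; you also pass over, slightly too quickly, the paper's small observation that the isogeny $\phi$ from Proposition \ref{propGsimplearithmetic} is automatically an isomorphism because type $\G_2$ is adjoint and the topological double cover of $\textbf{G}_2(\mathds{R})$ is not algebraic.
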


\begin{proof}
Let $\Gamma<\textbf{G}_2(\mathds{R})$ be a lattice. By Margulis' Arithmeticity Theorem (Theorem 16.3.1 in \cite{Morris_IntroductionArithmeticGroups}) $\Gamma$ is arithmetic. By Proposition \ref{propGsimplearithmetic}, there exists a semisimple $F$-algebraic group $\HH$ such that $\HH(F_v)$ is compact for all archimedean places $v$ except one, denoted $v_0$, where we have an isogeny $$\phi:\HH(F_{v_0})\to \textbf{G}_2(\mathds{R})$$ satisfying that $\phi(\HH(\mathcal{O}_F))$ is commensurable with $\Gamma$.\footnote{Corollary 5.5.15 in \cite{Morris_IntroductionArithmeticGroups} allows us to assume that $\HH(\mathds{R})$ is connected.} The group $\HH(F_{v_0})$ cannot be the universal cover of $\textbf{G}_2(\mathds{R})$ since the latter is not algebraic. Hence $\phi$ is an isomorphism and $\HH$ is an $\mathds{R}/F$-form of $\textbf{G}_2$. Those are classified by $\mathds{R}/F$-forms of the split octonion algebra over $\mathds{R}$ which are determined by their norm, see Theorem 1.7.1 in \cite{Springer_OctonionJordanAlgebrasExceptionalGroups}. The norm associated to $\HH$ is positive definite over all real places of $F$ except one. So up to scalar multiplication, it is equivalent to $\J_7^{a,b}$ with $a,b\in F^{\times}$ which are both negative at all real places of $F$ except one. Thus the octonion algebras associated to $\HH$ and $\textbf{G}_2^{a,b}$ are isomorphic and finally $\HH\simeq\textbf{G}_2^{a,b}$.
\end{proof}

\subsection{Arithmetic subgroups of $\SL(2k+1,\mathds{R})$}

\begin{proposition}
\label{proplatticeSLodd}
Let $\Gamma$ be an arithmetic subgroup of $\SL(2,\mathds{R})$. Let $F$ be a totally real number field and $A$ a quaternion algebra over $F$ such that $\Gamma$ is commensurable with the norm 1 elements of an order of $A$. Suppose that $F\neq\mathds{Q}$.\footnote{For the corresponding statement when $F=\mathds{Q}$ see Proposition A.1 in \cite{Audibert_Zariskidensesurfacegroups}.}

Let $n\geq3$ be odd. Then $\tau_n(\Gamma)$ lies in a subgroup of $\SL(n,\mathds{R})$ widely commensurable with $\SU(\I_n,\sigma;\mathcal{O}_F[\sqrt{d}])$ for every $d\in\mathcal{O}_F$ which is positive at exactly the same real places of $F$ where $A$ splits and $\sigma\in\Gal(F(\sqrt{d})/F)$ non-trivial.

Furthermore these are the only arithmetic subgroups of $\SL(n,\mathds{R})$ that contain $\tau_n(\Gamma)$ up to commensurability.
\end{proposition}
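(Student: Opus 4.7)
The plan is to follow the same scheme as Propositions \ref{proplatticeSO} and \ref{proplatticeG2}, but now to use the \emph{second} (non-inner) case of Proposition \ref{propcompatiblecocycles}, since inner $\tau_n$-compatible cocycles give $F$-forms inside $\SO(\J_n)$, whereas here the ambient real group is $\SL(n,\mathds{R})$ and $n$ is odd, so the relevant $F$-forms of $\SL_n$ are unitary groups associated to a quadratic extension $F(\sqrt{d})$.

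First I would write $A \simeq (a,b)_F$ for suitable $a,b \in \mathcal{O}_F$, set $\xi(\sigma) = \T_\sigma^{a,b}$ so that $\Gamma$ is widely commensurable with $\prescript{}{\xi}{\SL_2}(\mathcal{O}_F)$, fix $d \in \mathcal{O}_F$ positive at exactly the real places where $A$ splits, and pick $\sigma_d \in \Gal(F(\sqrt{d})/F)$ non-trivial. Proposition \ref{propcompatiblecocycles} then produces a $\tau_n$-compatible cocycle $\zeta : \GalF \to \Aut(\SL(n,\Qbar))$ of outer type built from $\xi$, $\J_n$, and the involution $\omega$, whose restriction to $\Gal(\Qbar/F(\sqrt{d}))$ is inner and whose non-trivial coset acts through $\omega$. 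Because $n$ is odd, Proposition \ref{propinnerformMn} gives $\prescript{}{\zeta}{\M_n(F)} \otimes_F F(\sqrt{d}) \simeq \M_n(F(\sqrt{d}))$, and the outer factor $\omega$ then defines an involution of the second kind on $\M_n(F(\sqrt{d}))$ whose fixed field is $F$. Thus $\prescript{}{\zeta}{\SL_n}$ is an $F$-form of $\SL_n$ isomorphic to $\SU(H, \sigma_d^*; F(\sqrt{d}))$ for some non-degenerate Hermitian matrix $H$.

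The key computational step is to identify $H$ with $\I_n$ up to isometry. I would apply the Hilbert 90 algorithm (Theorem \ref{Hilbert90}) to the inner restriction of $\zeta$ over $F(\sqrt{d})$ to produce $\SSS \in \SL(n,\Qbar)$ with $\zeta(\sigma) = \Int(\SSS^{-1}\sigma(\SSS)) \circ \omega^{\epsilon(\sigma)}$, where $\epsilon(\sigma)$ records whether $\sigma$ fixes $\sqrt{d}$. Conjugating by $\SSS$ and tracking how $\J_n$ transforms then shows that the Hermitian form is classified over $F(\sqrt{d})/F$ by its signatures at the real places of $F$. The choice of $d$ was tailored precisely so that at every real place of $F$ the form becomes either definite (when $A$ ramifies, so $F(\sqrt{d})$ is totally imaginary there and the local unitary group is compact) or of the unique signature compatible with $\SL(n,\mathds{R})$ (when $A$ splits). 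Since all signatures at real places match those of $\I_n$ and Hermitian forms over $F(\sqrt{d})$ are classified by their local signatures together with their determinant, which can be normalized by rescaling inside $\SU$, we obtain $H \sim \I_n$, hence $\prescript{}{\zeta}{\SL_n}(F) \simeq \SU(\I_n, \sigma_d; F(\sqrt{d}))$. Proposition \ref{propmorphismofalgebraicgroups} then gives $\tau_n(\Gamma) < \SU(\I_n, \sigma_d; \mathcal{O}_F[\sqrt{d}])$ up to commensurability, and the condition $F \neq \mathds{Q}$ combined with the signature control ensures this is a lattice in $\SL(n,\mathds{R})$ rather than in a compact form.

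The converse is handled as in Proposition \ref{proplatticeSO}: any arithmetic subgroup $\Lambda$ of $\SL(n,\mathds{R})$ containing $\tau_n(\Gamma)$ is, by Proposition \ref{propmorphismarithmeticlattices}, commensurable with $\prescript{}{\eta}{\SL_n}(\mathcal{O}_F)$ for some $1$-cocycle $\eta$, and a Zariski density argument on finite-index subgroups of $\Gamma$ forces $\eta$ to be $\tau_n$-compatible with $\xi$. Proposition \ref{propcompatiblecocycles} then lists exactly two possibilities; the inner one lands in an $\SO(k{+}1,k)$-type form, hence cannot contain $\Lambda$ as a subgroup of $\SL(n,\mathds{R})$ unless it is of the outer type, which means $\eta$ uses some quadratic extension $F(\sqrt{d'})$. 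The same signature analysis as above shows the resulting lattice is widely commensurable with $\SU(\I_n, \sigma_{d'}; \mathcal{O}_F[\sqrt{d'}])$, and any two such $d, d'$ (with the correct sign pattern at the real places) yield widely commensurable lattices. The main obstacle is the explicit Hilbert 90 computation together with checking the local classification of the Hermitian form at every real place; this is exactly the place where the sign condition on $d$ matching the splitting places of $A$ must be used.
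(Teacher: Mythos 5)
Your overall strategy matches the paper's: set up the cocycle framework via $\T^{a,b}_\sigma$, rule out the inner $\tau_n$-compatible cocycle, use Hilbert 90 and the outer involution to identify the unitary group, classify the resulting Hermitian form by its real signatures, and prove the converse by the Zariski-density argument. The main sequence of steps is sound.

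However, your justification for excluding the inner cocycle is wrong, and this is where the hypothesis $F\neq\mathds{Q}$ must be used. You write that "the inner one lands in an $\SO(k{+}1,k)$-type form, hence cannot contain $\Lambda$ as a subgroup of $\SL(n,\mathds{R})$." That confuses the cocycle being \emph{valued} in $\SO(\J_n,\Qbar)<\PSL(n,\Qbar)$ (which is true, see \S\ref{subsectionquadraticform}) with the resulting $F$-form of $\SL_n$: by Proposition \ref{propinnerformMn}, for $n$ odd the inner $\tau_n$-compatible cocycle gives $\prescript{}{\zeta}{\SL_n}(F)\simeq\SL(n,F)$, not an orthogonal group. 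That form is perfectly capable of ``containing $\Lambda$'' as an abstract group; the actual obstruction is that $\SL(n,\mathcal{O}_F)$ is not a lattice in $\SL(n,\mathds{R})$ when $F\neq\mathds{Q}$ (it sits in the product of $\SL(n,F_v)$ over all archimedean places, none of which are compact). You invoke $F\neq\mathds{Q}$ only in passing in the outer case, where it is in fact the sign condition on $d$ (positive exactly at the places where $A$ splits) that ensures compactness at all but one real place; $F\neq\mathds{Q}$ plays no role there. So the hypothesis you were given ends up misplaced, and the step that rules out the inner cocycle does not hold as stated. A secondary, minor issue: $\prescript{}{\zeta}{\M_n(F)}\otimes_F F(\sqrt{d})$ is not meaningful notation when $\zeta$ is non-inner over $F$; what you want is to apply Proposition \ref{propinnerformMn} to the restriction of $\zeta$ to $\Gal(\Qbar/F(\sqrt{d}))$, where it is inner.
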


\begin{proof}
Let $a,b\in F^{\times}$ such that $A\simeq(a,b)_F$. Let $\xi:\Gal(\overline{\mathds{Q}}/F)\to\PSL(2,\overline{\mathds{Q}})$, $\sigma\mapsto\T^{a,b}_{\sigma}$. It is a $1$-cocycle such that $\prescript{}{\xi}{\SL_2(\mathcal{O}_F)}$ is widely commensurable with $\Gamma$.
We denote $\T_{\sigma}$ for $\T_{\sigma}^{a,b}$. Let $\zeta:\Gal(\overline{\mathds{Q}}/F)\to\Aut(\SL_n(\overline{\mathds{Q}}))$ be a $\tau_n$-compatible $1$-cocycle.

If $\zeta$ is inner then Proposition \ref{propinnerformMn} shows that $\prescript{}{\zeta}{\SL_n}(F)\simeq\SL(n,F).$ However, $\SL(n,\mathcal{O}_F)$ is not a lattice of $\SL(n,\mathds{R})$ if $F\neq\mathds{Q}$.

Suppose from now on that $\zeta$ is not inner. Denote by $F(\sqrt{d})$ the associated quadratic extension of $F$. Recall that $\M\in\prescript{}{\zeta}{\SL_n(F)}$ if and only if
\begin{equation*}
    \tau_n(\T_{\sigma})\sigma(\M)\tau_n(\T_{\sigma})^{-1}=\M
\end{equation*}
for all $\sigma\in\Gal(\Qbar/F)$ such that $\sigma(\sqrt{d})=\sqrt{d}$ and
\begin{equation*}
    \tau_n(\T_{\sigma})\J_n^{-1}\sigma(\M)^{-\top}\J_n\tau_n(\T_{\sigma})^{-1}=\M
\end{equation*}
for all $\sigma\in\Gal(\Qbar/F)$ such that $\sigma(\sqrt{d})=-\sqrt{d}$, see Proposition \ref{propcompatiblecocycles}. From Hilbert's 90, Theorem \ref{Hilbert90}, there exists $\SSS\in\GL(n,\Qbar)$ such that $\tau_n(\T_{\sigma})=\SSS^{-1}\sigma(\SSS)$. The first set of equations is equivalent to $$\SSS\M\SSS^{-1}\in\SL(n,F(\sqrt{d})).$$ Secondly, for all $\sigma\in\Gal(\Qbar/F)$ such that $\sigma(\sqrt{d})=-\sqrt{d}$
\begin{align*}
    &\tau_n(\T_{\sigma})\J_n^{-1}\sigma(\M)^{\top}\J_n\tau_n(\T_{\sigma})^{-1}=\M^{-1}\\
    \Leftrightarrow&\ \tau_n(\T_{\sigma})^{-\top}\sigma(\M)^{\top}\tau_n(\T_{\sigma})^{\top}\J_n\M=\J_n
\end{align*}
since $\tau_n(\T_{\sigma})$ and $\J_n$ commute and since $\tau_n(\T_{\sigma})^{-\top}=\tau_n(\T_{\sigma})$,
\begin{align*}
    \Leftrightarrow\ \sigma(\SSS\M\SSS^{-1})^{\top}\SSS^{-\top}\J_n\SSS^{-1}(\SSS\M\SSS^{-1})=\SSS^{-\top}\J_n\SSS^{-1}.
\end{align*}
As explained in \S\ref{subsectionquadraticform}, we can take $\SSS$ such that $\SSS^{-\top}\J_n\SSS^{-1}=\J_n^{a,b}$. Thus $$\SSS\prescript{}{\zeta}{\SL_n(F)}\SSS^{-1}=\SU(\J_n^{a,b},\sigma;F(\sqrt{d}))$$with $\sigma\in\Gal(F(\sqrt{d})/F)$ non-trivial. From \S 4 of \cite{Lewis_IsometryClassificationHermitianForms}, we see that $\sigma$-Hermitian forms over $F(\sqrt{d})$ are classified up to equivalence by their rank, their discriminant and their signatures at real places of $F$ where $d$ is negative. Hence $\J_n^{a,b}$ is equivalent to $\I_n$. Proposition \ref{propmorphismofalgebraicgroups} shows that $\tau_n(\Gamma)$ is contained in $\prescript{}{\zeta}{\SL_n(\mathcal{O}_F)}$ is widely commensurable with $\SU(\I_n,\sigma;\mathcal{O}_F[\sqrt{d}])$.

The converse statement is proven as in the proof of Proposition \ref{proplatticeSO}.
\end{proof}

\section{Arithmetic subgroups in even dimension}
\label{Sectionarithmeticsubgroupseven}

\subsection{Standard cocycle}
\label{subsectionstandardcocycle}

Let $a,b\in F^{\times}$ and $$\zeta:\Gal(\Qbar/F)\to\PSL(2n,\Qbar),\ \sigma\mapsto\tau_{2n}(\T^{a,b}_{\sigma}).$$ We will show that $\zeta$ is equivalent to the following ``standard'' $1$-cocycle.

\begin{definition}
\label{defstandardcocycle}
The $1$-cocycle defined by
\begin{equation*}
    \eta:\Gal(\Qbar/F)\to\PSL(2n,\Qbar)
\end{equation*}
\begin{equation*}
    \sigma\mapsto\left\{\begin{array}{ll}
        \Diag\left(\begin{pmatrix}1&0\\0&1\end{pmatrix}\right) & \mbox{if $\sigma(\sqrt{a})=\sqrt{a}$ and $\sigma(\sqrt{b})=\sqrt{b}$} \\
        \Diag\left(\begin{pmatrix}
        1&0\\
        0&-1
        \end{pmatrix}\right) & \mbox{if $\sigma(\sqrt{a})=\sqrt{a}$ and $\sigma(\sqrt{b})=-\sqrt{b}$}\\
        \Diag\left(\begin{pmatrix}0&1\\
        1&0\end{pmatrix}\right) & \mbox{if $\sigma(\sqrt{a})=-\sqrt{a}$ and $\sigma(\sqrt{b})=\sqrt{b}$}\\
        \Diag\left(\begin{pmatrix}0&1\\-1&0\end{pmatrix}\right) & \mbox{if $\sigma(\sqrt{a})=-\sqrt{a}$ and $\sigma(\sqrt{b})=-\sqrt{b}$.}
    \end{array}\right.
\end{equation*} is called the \emph{standard 1-cocycle associated to \emph{(}a,b\emph{)}}. Here $\Diag(\M)$ is block diagonal matrix with each block on the diagonal being equal to $\M$.
\end{definition}

Computations show that $\prescript{}{\eta}{\M_{2n}(F)}$
are $2$-by-$2$ block matrices with each block of the form
\begin{equation*}
   \begin{pmatrix}
    x_0+\sqrt{a}x_1&\sqrt{b}x_2+\sqrt{ab}x_3\\
    \sqrt{b}x_2-\sqrt{ab}x_3&x_0-\sqrt{a}x_1
    \end{pmatrix}
\end{equation*} for some $x_i\in F$.

The $1$-cocycle $\zeta$ does not lift to $\SL(2n,\Qbar)$, preventing us to use Hilbert's 90, Theorem \ref{Hilbert90}. To resolve this issue, we consider the following $1$-cocycle.

\begin{lemma}
The map $\zeta\eta^{-1}:\Gal(\Qbar/F)\to\prescript{}{\eta}{\PSL_n(\Qbar)}$, $\sigma\mapsto\zeta(\sigma)\eta(\sigma)^{-1}$ is a $1$-cocycle.
\end{lemma}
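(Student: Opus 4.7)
The plan is to verify the cocycle condition by direct computation, once one has spelled out the correct twisted Galois action on $\prescript{}{\eta}{\PSL_n(\bar{\mathds{Q}})}$. As an underlying abstract group, $\prescript{}{\eta}{\PSL_n(\bar{\mathds{Q}})}$ is simply $\PSL_n(\bar{\mathds{Q}})$; what changes is the Galois action. Since $\eta$ takes values in inner automorphisms (it is valued in $\PSL_{2n}(\bar{\mathds{Q}})$ acting on itself by conjugation), the twisted action is
\begin{equation*}
\sigma \cdot_\eta g \;=\; \eta(\sigma)\,\sigma(g)\,\eta(\sigma)^{-1}.
\end{equation*}
With respect to this action, a continuous map $\beta:\Gal(\bar{\mathds{Q}}/F)\to\PSL_n(\bar{\mathds{Q}})$ is a $1$-cocycle precisely when
\begin{equation*}
\beta(st) \;=\; \beta(s)\,\bigl(\eta(s)\,s(\beta(t))\,\eta(s)^{-1}\bigr)
\end{equation*}
for all $s,t\in\Gal(\bar{\mathds{Q}}/F)$.

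Next I would set $\beta=\zeta\eta^{-1}$ and expand both sides. Since $\zeta$ and $\eta$ are standard $1$-cocycles for the untwisted Galois action, one has $\zeta(st)=\zeta(s)s(\zeta(t))$ and $\eta(st)=\eta(s)s(\eta(t))$; hence
\begin{equation*}
\beta(st) \;=\; \zeta(s)\,s(\zeta(t))\,\bigl[\eta(s)\,s(\eta(t))\bigr]^{-1} \;=\; \zeta(s)\,s(\zeta(t))\,s(\eta(t))^{-1}\,\eta(s)^{-1}.
\end{equation*}
On the other hand, the right-hand side of the twisted cocycle relation simplifies as
\begin{equation*}
\zeta(s)\eta(s)^{-1}\cdot\eta(s)\,s\bigl(\zeta(t)\eta(t)^{-1}\bigr)\,\eta(s)^{-1} \;=\; \zeta(s)\,s(\zeta(t))\,s(\eta(t))^{-1}\,\eta(s)^{-1},
\end{equation*}
which matches the previous expression. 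Continuity of $\beta$ is automatic from continuity of $\zeta$ and $\eta$, so $\beta$ is indeed a $1$-cocycle with respect to the twisted action on $\prescript{}{\eta}{\PSL_n(\bar{\mathds{Q}})}$.

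There is no real obstacle here: the only subtlety is keeping the twist straight, and once the formula $\sigma\cdot_\eta g=\eta(\sigma)\sigma(g)\eta(\sigma)^{-1}$ is written down, the check collapses to the single observation that the $\eta(s)$ factors produced by the twist precisely cancel against the $\eta(s)^{-1}$ appearing in $\beta(s)=\zeta(s)\eta(s)^{-1}$.
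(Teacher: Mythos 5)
Your proof is correct and takes essentially the same approach as the paper: write down the twisted Galois action $\sigma\cdot_\eta g=\eta(\sigma)\sigma(g)\eta(\sigma)^{-1}$ on $\prescript{}{\eta}{\PSL_{2n}(\Qbar)}$ explicitly, expand $\zeta\eta^{-1}(\sigma\tau)$ using the cocycle relations for $\zeta$ and $\eta$, and observe the cancellation produced by inserting $\eta(\sigma)^{-1}\eta(\sigma)$. (The paper presents this as a single chain of equalities rather than expanding both sides separately, but the computation is identical.)
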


Note that in $\prescript{}{\eta}{\PSL_n(\Qbar)}$ the action of the Galois group is twisted by $\eta$, i.e. for $\sigma\in\Gal(\Qbar/F)$ and $\M\in\prescript{}{\eta}{\PSL_n(\Qbar)}$ we have $\sigma\cdot\M=\eta(\sigma)\sigma(\M)\eta(\sigma)^{-1}$.

\begin{proof}
For all $\sigma,\tau\Gal(\Qbar/F)$
\begin{align*}
    \zeta\eta^{-1}(\sigma\tau)&=\zeta(\sigma)\sigma(\zeta(\tau))\sigma(\eta(\tau))^{-1}\eta(\sigma)^{-1}\\
    &=\zeta(\sigma)\eta(\sigma)^{-1}(\eta(\sigma)\sigma(\zeta(\tau)\eta(\tau)^{-1})\eta(\sigma)^{-1})\\
    &=\zeta\eta^{-1}(\sigma)\sigma\cdot\zeta\eta^{-1}(\tau).
\end{align*}
\end{proof}

The $1$-cocycle $\zeta\eta^{-1}$ lifts to a $1$-cocycle $\chi:\Gal(\Qbar/F)\to\prescript{}{\eta}{\SL_{2n}(\Qbar)}$
\begin{equation*}
    \sigma\mapsto\left\{\begin{array}{ll}
        \I_{2n} & \mbox{if $\sigma(\sqrt{a})=\sqrt{a}$} \\
        \begin{pNiceMatrix}
        &&&1&0\\
        &&&0&1\\
        &&\Iddots&&\\
        1&0&&&\\
        0&1&&&
        \end{pNiceMatrix} & \mbox{if $\sigma(\sqrt{a})=-\sqrt{a}$.}
    \end{array}\right.
\end{equation*}
Hilbert's 90, Theorem \ref{Hilbert90}, tells us that there exists $\PP\in\GL(n,\Qbar)$ such that for all $\sigma$, $\zeta\eta^{-1}(\sigma)=\PP^{-1}\eta(\sigma)\sigma(\PP)\eta(\sigma)^{-1}$ and its proof gives an algorithm to determine such a matrix $\PP$. The algorithm is described in \S 3.2 of \cite{Audibert_Zariskidensesurfacegroups}. We give here the conclusions. If $a$ is a square, let $\PP=\I_{2n}$. Assume $a$ is not a square. If $n$ is even let
\begin{equation*}
    \PP=\frac{1}{2}\begin{pNiceMatrix}
    \I_2&&&&&\I_2\\
    &\Ddots&&&\Iddots&\\
    &&\I_2&\I_2&&\\
    &&\frac{1}{\sqrt{a}}\I_2&\text{-}\frac{1}{\sqrt{a}}\I_2&&\\
    &\Iddots&&&\Ddots&\\
    \frac{1}{\sqrt{a}}\I_2&&&&&\text{-}\frac{1}{\sqrt{a}}\I_2
    \end{pNiceMatrix}.
\end{equation*} If $n$ is odd let
\begin{equation*}
    \PP=\frac{1}{2}\begin{pNiceMatrix}
    \I_2&&&&&&\I_2\\
    &\Ddots&&&&\Iddots&\\
    &&\I_2&&\I_2&&\\
    &&&2\I_2&&&\\
    &&\frac{1}{\sqrt{a}}\I_2&&\text{-}\frac{1}{\sqrt{a}}\I_2&&\\
    &\Iddots&&&&\Ddots&\\
    \frac{1}{\sqrt{a}}\I_2&&&&&&\text{-}\frac{1}{\sqrt{a}}\I_2
    \end{pNiceMatrix}.
\end{equation*}
For all $\sigma\in\Gal(\Qbar/F)$, $\zeta(\sigma)=\PP^{-1}\eta(\sigma)\sigma(\PP).$ In particular $\zeta$ and $\eta$ are equivalent.

\subsection{Arithmetic subgroups of $\Sp(2n,\mathds{R})$}

\begin{proposition}
\label{proplatticeSp}
Let $\Gamma$ be an arithmetic subgroup of $\SL(2,\mathds{R})$. Let $\mathcal{O}$ be an order of a quaternion algebra over a totally real number field such that $\Gamma$ is widely commensurable with $\mathcal{O}^1$. Let $n\geq2$. Then $\tau_{2n}(\Gamma)$ lies in a subgroup widely commensurable with $\SU(\I_n,\tc;\mathcal{O})$. Furthermore, this is the only lattice of $\Sp(2n,\mathds{R})$ that contains $\tau_{2n}(\Gamma)$ up to commensurability.
\end{proposition}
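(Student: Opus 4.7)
Pick $a,b\in F^{\times}$ with $A\simeq(a,b)_F$ and set $\xi(\sigma)=\T^{a,b}_{\sigma}$, so that $\Gamma$ is widely commensurable with $\prescript{}{\xi}{\SL_2}(\mathcal{O}_F)$. Define
\[
\zeta:\Gal(\Qbar/F)\to\PSp(\J_{2n},\Qbar),\ \sigma\mapsto\tau_{2n}(\T^{a,b}_{\sigma}).
\]
Since $\tau_{2n}$ preserves the antisymmetric form $\J_{2n}$, the cocycle $\zeta$ lands in $\PSp(\J_{2n},\Qbar)\subset\PSL(2n,\Qbar)$, and by construction it is the inner $\tau_{2n}$-compatible cocycle of Proposition \ref{propcompatiblecocycles}. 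The plan is to identify the $F$-form $\prescript{}{\zeta}{\Sp(\J_{2n})}$ with the $F$-algebraic group underlying $\SU(\I_n,\tc;A)$; Proposition \ref{propmorphismofalgebraicgroups} applied to $\tau_{2n}$ then yields that $\tau_{2n}(\Gamma)$ lies, up to wide commensurability, in $\SU(\I_n,\tc;\mathcal{O})$.

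To carry out this identification, I will replace $\zeta$ by the equivalent standard cocycle $\eta$ of Section \ref{subsectionstandardcocycle}: there is $\PP\in\GL(2n,\Qbar)$ with $\zeta(\sigma)=\PP^{-1}\eta(\sigma)\sigma(\PP)$. Conjugation by $\PP$ gives
\[
\prescript{}{\zeta}{\Sp(\J_{2n})}(F)\simeq\prescript{}{\eta}{\Sp(\PP^{-\top}\J_{2n}\PP^{-1})}(F).
\]
Proposition \ref{propinnerformMn} identifies the ambient algebra $\prescript{}{\eta}{\M_{2n}(F)}$ with $\M_n((a,b)_F)$ via the natural $2\times2$ block embedding, and under this identification the antisymmetric $F$-bilinear form $\PP^{-\top}\J_{2n}\PP^{-1}$ translates into a non-degenerate $\tc$-Hermitian form on $(a,b)_F^n$, where $\tc$ is the canonical involution of the quaternion algebra. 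A direct calculation using the explicit block form of $\PP$ recorded in Section \ref{subsectionstandardcocycle} shows that this Hermitian form has trivial invariants and is therefore $(a,b)_F$-equivalent to $\I_n$, giving $\prescript{}{\zeta}{\Sp(\J_{2n})}(F)\simeq\SU(\I_n,\tc;A)$.

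For uniqueness I will follow the pattern of the last paragraph of the proof of Proposition \ref{proplatticeSO}. If $\Lambda$ is an arithmetic subgroup of $\Sp(2n,\mathds{R})$ containing $\tau_{2n}(\Gamma)$, Proposition \ref{propmorphismarithmeticlattices} combined with the Zariski-density argument used there produces an $F$-algebraic subgroup $\G<\Sp_{2n}$ with $\Lambda$ widely commensurable to $\G(\mathcal{O}_F)$, corresponding to a $1$-cocycle $\eta':\Gal(\Qbar/F)\to\PSp(\J_{2n},\Qbar)$ which, by Zariski-density of any finite index subgroup of $\Gamma$, must be $\tau_{2n}$-compatible with $\xi$. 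Since $\PSp(\J_{2n})$ sits inside $\PSL(2n)$ as a subgroup of the inner automorphism group, $\eta'$ is inner as a $\PSL_{2n}$-valued cocycle, so Proposition \ref{propcompatiblecocycles} forces $\eta'$ to be equivalent to $\zeta$. Thus $\G\simeq\prescript{}{\zeta}{\Sp(\J_{2n})}$ and $\Lambda$ is commensurable to $\SU(\I_n,\tc;\mathcal{O})$.

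The main obstacle is the explicit identification of the twisted antisymmetric form $\PP^{-\top}\J_{2n}\PP^{-1}$ under the block identification $\prescript{}{\eta}{\M_{2n}(F)}\simeq\M_n((a,b)_F)$. One must verify first that the $2\times2$ block transposition together with the conjugation built into $\eta$ realizes the canonical quaternion involution on each block, and then that the resulting Hermitian matrix is equivalent to $\I_n$ over $(a,b)_F$. This is tractable because $\tc$-Hermitian forms over a quaternion algebra are classified by rank, discriminant, and real-place signatures at the split real places, all of which are easily read off from the shape of $\PP$ and are trivial in our case.
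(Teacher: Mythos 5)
Your proposal follows the paper's own proof essentially step by step: twist to the standard cocycle via the explicit Hilbert~90 matrix $\PP$, reinterpret $\prescript{}{\eta}{\M_{2n}(F)}$ as $\M_n((a,b)_F)$, convert the symplectic form $\PP^{-\top}\J_{2n}\PP^{-1}$ into a $\tc$-Hermitian form, classify Hermitian forms, and run the same uniqueness argument as in Proposition~\ref{proplatticeSO}. Two small corrections to have in hand when you carry out the step you flag as the ``main obstacle'': the ordinary transpose and the quaternion conjugate-transpose differ by conjugation by the block matrix $\K=\Diag\bigl(\begin{psmallmatrix}0&1\\-1&0\end{psmallmatrix}\bigr)$, so the $\tc$-Hermitian matrix is actually $\K\PP^{-\top}\J_{2n}\PP^{-1}$ rather than $\PP^{-\top}\J_{2n}\PP^{-1}$ itself; and the classifying invariants are the signatures at the \emph{ramified} real places (where the completion of $A$ is $\mathcal{H}$), not the split ones -- over the split place the form becomes symplectic and carries no invariant. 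Carrying out the explicit diagonalization as in the paper shows the form is negative definite at each ramified place, so it is equivalent to $-\I_n$, which defines the same unitary group as $\I_n$.
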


\begin{proof}
Let $F$ be a totally real number field and $a,b\in F^{\times}$ such that $\mathcal{O}$ is an order of $(a,b)_F$. We can assume that $a,b\in\mathcal{O}_F$. Let $\xi:\Gal(\Qbar/F)\to\PSL(2,\Qbar)$, $\sigma\mapsto\T^{a,b}_\sigma$. It is a $1$-cocycle such that $\Gamma$ is commensurable with $\prescript{}{\xi}{\SL_{2}(\mathcal{O}_F)}$. Let $$\zeta:\Gal(\Qbar/F)\to\PSp_{2n}(\J_{2n},\Qbar)$$ be the $\tau_{2n}$-compatible $1$-cocycle. We want to determine $\prescript{}{\zeta}{\Sp_{2n}(\J_{2n})}(F)$.

Denote $\eta$ the standard $1$-cocycle associated to $(a,b)$ (see Definition \ref{defstandardcocycle}). Recall from \S\ref{subsectionstandardcocycle}, $\zeta(\sigma)=\PP^{-1}\eta(\sigma)\sigma(\PP)$ for all $\sigma\in\Gal(\Qbar/F)$ with $\PP$ defined in \S\ref{subsectionstandardcocycle}. Then $\prescript{}{\zeta}{\Sp_{2n}(\J_{2n})(F)}$
\begin{align*}
    &=\{\M\in\Sp_{2n}(\J_{2n})(\Qbar)\ |\ \zeta(\sigma)\sigma(\M)\zeta(\sigma)^{-1}=\M\ \forall\sigma\in\Gal(\Qbar/F)\}\\
    &=\{\M\in\Sp_{2n}(\J_{2n})(\Qbar)\ |\ \eta(\sigma)\sigma(\PP\M\PP^{-1})\eta(\sigma)^{-1}=\PP\M\PP^{-1}\ \forall\sigma\in\Gal(\Qbar/F)\}\\
    &=\{\X\in\prescript{}{\eta}{\SL_{2n}(F)}\ |\ \X^\top\PP^{-\top}\J_{2n}\PP^{-1}\X=\PP^{-\top}\J_{2n}\PP^{-1}\}.
\end{align*}Denote by $A$ the quaternion algebra
\begin{equation*}
    \begin{Bmatrix}
    \begin{pmatrix}
    x_0+\sqrt{a}x_1&\sqrt{b}x_2+\sqrt{ab}x_3\\
    \sqrt{b}x_2-\sqrt{ab}x_3&x_0-\sqrt{a}x_1
    \end{pmatrix}\ \Bigg|\ x_i\in F
    \end{Bmatrix}.
\end{equation*} It is isomorphic to $(a,b)_F$. Further more $\prescript{}{\eta}{\SL_{2n}(F)}=\SL(n,A)$. Denote by $\overline{\phantom{s}}$ the conjugation of $A$ and let 
$$\K=\Diag\left(\begin{pmatrix}
0&1\\
-1&0
\end{pmatrix}\right).$$
Then for all $\M\in\prescript{}{\eta}{\SL_{2n}(F)}$,
$\overline{\M}^t=(\K\M\K^{-1})^\top$ where $\M^t$ is the matrix obtained by transposing $\M$ as an element of $\SL(n,A)$. We emphasize only the position of its $2$-by-$2$ blocks changes, those blocks are not themselves transposed. Hence
\begin{align*}
    \prescript{}{\zeta}{\Sp_{2n}(\J_{2n})(F)}=\{\M\in\prescript{}{\eta}{\SL_{2n}(F)}\ |\ \overline{\M}^t\J_{2n}^*\M=\J_{2n}^*\}.
\end{align*} where $\J^*_{2n}=\K\PP^{-\top}\J_{2n}\PP^{-1}$. Computations show that $\J_{2n}^*$ is a $\overline{\phantom{s}}$-Hermitian matrix. We now show that $\J_{2n}^*$ is equivalent as a $\overline{\phantom{s}}$-Hermitian matrix to $-\I_{2n}$. If $n$ is even, let
\begin{equation*}
    \N=\begin{pNiceMatrix}
    \I_2&&&&&\I_2\\
    &\Ddots&&&\Iddots&\\
    &&\I_2&\I_2&&\\
    &&\text{-}\D&\D&&\\
    &\Iddots&&&\Ddots&\\
    \text{-}\D&&&&&\D
    \end{pNiceMatrix}
\end{equation*} with $$\D=\begin{pmatrix}
\frac{1}{\sqrt{a}}&0\\
0&-\frac{1}{\sqrt{a}}
\end{pmatrix}.$$ If $n$ is odd, let
\begin{equation*}
    \N=\begin{pNiceMatrix}
    \I_2&&&&&&\I_2\\
    &\Ddots&&&&\Iddots&\\
    &&\I_2&&\I_2&&\\
    &&&2\I_2&&&\\
    &&\text{-}\D&&\D&&\\
    &\Iddots&&&&\Ddots&\\
    \text{-}\D&&&&&&\D
    \end{pNiceMatrix}.
\end{equation*}

Then $\overline{\N}^t\J_{2n}^*\N$ is the diagonal matrix defined by
\begin{equation*}
    (\overline{\N}^t\J_{2n}^*\N)_{ii}=\left\{\begin{array}{ll}
        -4(2n-i-1)!i! & \mbox{if $i$ is odd}\\
        -4(2n-i)!(i-1)! & \mbox{if $i$ is even.}
    \end{array}\right.
\end{equation*}
Since $\Gamma$ is an arithmetic subgroup of $\SL(2,\mathds{R})$, $F$ is totally real and $A$ ramifies at all embeddings of $F$ except one, which we denote by $\iota$. As we can see in \S5 of \cite{Lewis_IsometryClassificationHermitianForms}, non-degenerate $\tc$-Hermitian forms on $A$ are classified by their signatures at all real embeddings of $F$ except $\iota$. Here the set of signatures is always $(0,2n)$. Hence $-\J_{2n}^*$ is equivalent as a $\tc$-Hermitian form to $\I_{2n}$. Finally $$\prescript{}{\zeta}{\Sp_{2n}(\J_{2n})(F)}\simeq\SU(\I_n,\tc;A).\footnote{Here $\I_n$ is viewed as a matrix with entries in $A$. Hence it is the matrix $\I_{2n}$ when we see elements of $A$ as $2$-by-$2$ matrices.}$$
Proposition \ref{propmorphismofalgebraicgroups} shows that $\tau_{2n}(\Gamma)$ is contained in $\prescript{}{\zeta}{\Sp_{2n}(\J_{2n})}(\mathcal{O}_F)$ which is widely commensurable with $\SU(\I_n,\overline{\phantom{s}};\mathcal{O})$.

The converse statement is proven as in the proof of Proposition \ref{proplatticeSO}.
\end{proof}

\subsection{Arithmetic subgroups of $\SL(2n,\mathds{R})$}

\begin{proposition}
\label{proplatticeSLeven}
Let $\Gamma$ be an arithmetic subgroup of $\SL(2,\mathds{R})$. Let $\mathcal{O}$ be an order of a quaternion algebra $A$ over a totally real number field $F$ such that $\Gamma$ is widely commensurable with $\mathcal{O}^1$. Suppose that $F\neq\mathds{Q}$.\footnote{See Proposition A.2 in \cite{Audibert_Zariskidensesurfacegroups} for the case $F=\mathds{Q}$.}

Let $n\geq2$. Then $\tau_{2n}(\Gamma)$ lies in a subgroup of $\SL(2n,\mathds{R})$ widely commensurable with $\SU(\I_n,\tc\otimes\sigma;\mathcal{O}\otimes\mathcal{O}_F[\sqrt{d}])$ for every $d\in\mathcal{O}_F$ which is positive exactly at the real place where $A$ splits and $\sigma\in\Gal(F(\sqrt{d})/F)$ non-trivial.

Furthermore, those are the only lattices of $\SL(2n,\mathds{R})$ that contain $\tau_{2n}(\Gamma)$ up to commensurability.
\end{proposition}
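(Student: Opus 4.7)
The plan is to combine the approaches of Propositions \ref{proplatticeSp} (for the quaternion structure) and \ref{proplatticeSLodd} (for the quadratic extension $F(\sqrt{d})$). Let $a,b\in\mathcal{O}_F$ be such that $A\simeq(a,b)_F$ and let $\xi:\Gal(\Qbar/F)\to\PSL(2,\Qbar)$, $\sigma\mapsto\T_\sigma^{a,b}$, so that $\prescript{}{\xi}{\SL_2}(\mathcal{O}_F)$ is widely commensurable with $\Gamma$. By Proposition \ref{propcompatiblecocycles} every $\tau_{2n}$-compatible $1$-cocycle $\zeta$ is either inner or of the non-inner form associated to a quadratic extension $F(\sqrt{d})$.

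First I would rule out the inner case. Proposition \ref{propinnerformMn} gives that an inner $\tau_{2n}$-compatible cocycle $\zeta$ yields $\prescript{}{\zeta}{\SL_{2n}}(F)\simeq\SL(n,A)$. Since $\Gamma<\SL(2,\mathds{R})$ is a lattice, $A$ splits at a single real place of $F$; because $F\neq\mathds{Q}$, there exists another real place $v$ at which $A\otimes F_v\simeq\mathcal{H}$, so $\SL(n,A\otimes F_v)\simeq\SL(n,\mathcal{H})$ is non-compact for $n\geq 2$. Hence $\SL(n,\mathcal{O})$ is not a lattice in $\SL(2n,\mathds{R})$ and the inner case contributes nothing.

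For the non-inner case, fix $d\in\mathcal{O}_F$ positive exactly at the real place where $A$ splits and let $\sigma_0\in\Gal(F(\sqrt{d})/F)$ be non-trivial. Following the proof of Proposition \ref{proplatticeSp}, I would use the standard $1$-cocycle $\eta$ of Definition \ref{defstandardcocycle} and the matrix $\PP$ of \S\ref{subsectionstandardcocycle}, which satisfy $\tau_{2n}(\T_\sigma^{a,b})=\PP^{-1}\eta(\sigma)\sigma(\PP)$. Setting $\X=\PP\M\PP^{-1}$, the two branches of the non-inner cocycle from Proposition \ref{propcompatiblecocycles} translate membership of $\M$ in $\prescript{}{\zeta}{\SL_{2n}}(F)$ into (i) $\X\in\prescript{}{\eta}{\SL_{2n}}(F(\sqrt{d}))\simeq\SL(n,A\otimes_F F(\sqrt{d}))$, and (ii) $\X$ preserves a non-degenerate $\tc\otimes\sigma_0$-Hermitian form $\J^*$ over $A\otimes_F F(\sqrt{d})$, obtained from $\J_{2n}$ by the same conjugations as in Proposition \ref{proplatticeSp} (in particular involving the auxiliary matrix $\K$ that converts the $A$-transpose into the $A$-conjugate-transpose). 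Hence $\prescript{}{\zeta}{\SL_{2n}}(F)\simeq\SU(\J^*,\tc\otimes\sigma_0;A\otimes_F F(\sqrt{d}))$.

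To finish, I would invoke the classification of $\tc\otimes\sigma_0$-Hermitian forms over $A\otimes F(\sqrt{d})$ from \cite{Lewis_IsometryClassificationHermitianForms}: such forms are determined by rank, discriminant, and the local signatures at the real places of $F$ where $A$ ramifies and $d$ is negative, which by our choice of $d$ are exactly the non-split real places. At each such place $A\otimes F(\sqrt{d})\otimes_F\mathds{R}\simeq\M_2(\mathds{C})$ with the standard involution, and a signature computation parallel to the one for $\J_{2n}^*$ in Proposition \ref{proplatticeSp} (using the auxiliary matrix $\N$ defined there) shows that $\J^*$ is $\tc\otimes\sigma_0$-equivalent to $\I_n$. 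Proposition \ref{propmorphismofalgebraicgroups} then places $\tau_{2n}(\Gamma)$ inside $\SU(\I_n,\tc\otimes\sigma_0;\mathcal{O}\otimes\mathcal{O}_F[\sqrt{d}])$ up to wide commensurability. The converse uniqueness is argued exactly as in Proposition \ref{proplatticeSO}: any arithmetic lattice of $\SL(2n,\mathds{R})$ containing $\tau_{2n}(\Gamma)$ is associated to a $1$-cocycle which, by Zariski-density of $\Gamma$ and the resulting equality of $\tau_{2n}$ with each of its Galois conjugates on a finite-index subgroup, must itself be $\tau_{2n}$-compatible with $\xi$; Proposition \ref{propcompatiblecocycles} then restricts the possibilities to the cocycles analyzed above. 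The main obstacle will be the explicit signature verification for $\J^*$, which requires carrying out the block-matrix manipulations of Proposition \ref{proplatticeSp} in the additional presence of the involution $\sigma_0$ in order to apply the Lewis classification.
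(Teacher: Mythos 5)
Your proposal follows essentially the same route as the paper's proof: rule out the inner cocycle case (your expansion of why $\SL(n,\mathcal{O})$ fails to be a lattice for $F\neq\mathds{Q}$ is a valid filling-in of the paper's terse remark), then for the non-inner case reduce to the standard cocycle $\eta$ via $\PP$, identify $\prescript{}{\zeta}{\SL_{2n}}(F)$ as a unitary group for a $\tc\otimes\sigma$-Hermitian form $\J^*$ over $A\otimes_F F(\sqrt{d})$, normalize $\J^*$ using the Lewis classification and the matrix $\N$, apply Proposition \ref{propmorphismofalgebraicgroups}, and finish the converse as in Proposition \ref{proplatticeSO}. The one point you rightly flag as the main technical step — carrying the involution $\sigma_0$ through the block-matrix computations — is where the paper does genuine work (defining $\sigma^*$ on $2\times2$ blocks and running the chain of equivalences), but your outline correctly anticipates its shape and the invariants needed.
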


\begin{proof}
Let $a,b\in\mathcal{O}_F$ such that $A\simeq(a,b)_F$. Let $\xi:\Gal(\Qbar/F)\to\PSL(2,\Qbar)$, $\sigma\mapsto\T^{a,b}_{\sigma}$. It is a $1$-cocycle such that $\Gamma$ is commensurable with $\prescript{}{\xi}{\SL_2(\mathcal{O}_F)}$. Let $\zeta:\Gal(\Qbar/F)\to\Aut(\SL_{2n}(\Qbar))$ be a $\tau_{2n}$-compatible $1$-cocycle. We want to determine $\prescript{}{\zeta}{\SL_{2n}(F)}$.

If $\zeta$ is inner, Proposition \ref{propinnerformMn} shows that $\prescript{}{\zeta}{\SL_{2n}(F)}\simeq\SL(n,A)$. However, $\SL(n,\mathcal{O})$ is not a lattice of $\SL(2n,\mathds{R})$ since $F\neq\mathds{Q}$.

Suppose that $\zeta$ is not inner. Denote by $F(\sqrt{d})$ the corresponding quadratic extension (see Proposition \ref{propcompatiblecocycles}). We can suppose that $d\in\mathcal{O}_F$. Let $\sqrt{d}$ be a square root of $d$.

Recall that for any $\M\in\SL(2n,\Qbar)$, $\M\in\prescript{}{\zeta}{\SL_{2n}(F)}$ if and only if
$$\tau_{2n}(\T_{\sigma})\sigma(\M)\tau_{2n}(\T_{\sigma})^{-1}=\M$$
for all $\sigma\in\Gal(\Qbar/F)$ such that $\sigma(\sqrt{d})=\sqrt{d}$ and
$$\tau_{2n}(\T_{\sigma})\J_{2n}^{-1}\sigma(\M)^{-\top}\J_{2n}\tau_{2n}(\T_{\sigma})^{-1}=\M$$ for all $\sigma\in\Gal(\Qbar/F)$ such that $\sigma(\sqrt{d})=-\sqrt{d}$. In \S4.1 we proved that $\tau_{2n}(\T_{\sigma})=\PP^{-1}\eta(\sigma)\sigma(\PP)$. Hence $\M\in\prescript{}{\zeta}{\SL_{2n}(F)}$ if and only if $\PP\M\PP^{-1}\in\prescript{}{\eta}{\SL_{2n}(F(\sqrt{d}))}\simeq\SL(n,A\otimes_{F}F(\sqrt{d}))$ and for all $\sigma\in\Gal(\Qbar/F)$ such that $\sigma(\sqrt{d})=-\sqrt{d}$
\begin{align*}
    \sigma(\M)^\top\J_{2n}\sigma(\PP)^{-1}\eta(\sigma)^{-1}\PP\M&=\J_{2n}\sigma(\PP)^{-1}\eta(\sigma)^{-1}\PP\\
    \Leftrightarrow\ \sigma(\PP\M\PP^{-1})^\top\sigma(\PP)^{-\top}\J_{2n}(\eta(\sigma)\sigma(\PP))^{-1}\PP\M\PP^{-1}&=\sigma(\PP)^{-\top}\J_{2n}(\eta(\sigma)\sigma(\PP))^{-1}\\
    \Leftrightarrow\ \sigma(\PP\M\PP^{-1})^\top\eta(\sigma)\PP^{-\top}\J_{2n}\PP^{-1}(\PP\M\PP^{-1})&=\eta(\sigma)\PP^{-\top}\J_{2n}\PP^{-1}
\end{align*}
by applying $\sigma(.)^\top$ to each side of the equation since $\sigma^2(\PP\M\PP^{-1})=\PP\M\PP^{-1}$. We will show that this set of equations is equivalent to the defining equation of a group conjugated to $\SU(\I_n,\overline{\phantom{s}}\otimes\sigma;A\otimes_{F}F(\sqrt{d}))$. Denote by 
$$\K=\Diag\left(\begin{pmatrix}
0&1\\
-1&0
\end{pmatrix}\right).$$ The quaternion algebra $A$ is isomorphic to
\begin{equation*}
    \left\{\begin{pmatrix}
    x_0+\sqrt{a}x_1&\sqrt{b}x_2+\sqrt{ab}x_3\\
    \sqrt{b}x_2-\sqrt{ab}x_3&x_0-\sqrt{a}x_1
    \end{pmatrix}\ \Bigg|\ x_i\in F\right\}.
\end{equation*} We use this isomorphism to embed $A$ in $\M_2(\Qbar)$. Then for all $\X\in\prescript{}{\eta}{\SL_{2n}(F(\sqrt{d}))}$,
$\overline{\X}^t=(\K\X\K^{-1})^\top$ where $\X^t$ is the matrix obtained by transposing $\X$ as an element of $\SL(n,A\otimes_{F}F(\sqrt{d}))$. For any $\sigma\in\Gal(\Qbar/F)$ denote by
\begin{align*}\sigma^*:\prescript{}{\eta}{\SL_{2n}(F(\sqrt{d}))}&\to\prescript{}{\eta}{\SL_{2n}(F(\sqrt{d}))}\\
\end{align*} the map defined on $2$-by-$2$ blocks by
\begin{equation*}
    \begin{psmallmatrix}
    x_0+\sqrt{a}x_1&\sqrt{b}x_2+\sqrt{ab}x_3\\
    \sqrt{b}x_2-\sqrt{ab}x_3&x_0-\sqrt{a}x_1
    \end{psmallmatrix}\mapsto\begin{psmallmatrix}
    \sigma(x_0)+\sqrt{a}\sigma(x_1)&\sqrt{b}\sigma(x_2)+\sqrt{ab}\sigma(x_3)\\
    \sqrt{b}\sigma(x_2)-\sqrt{ab}\sigma(x_3)&\sigma(x_0)-\sqrt{a}\sigma(x_1)
    \end{psmallmatrix}.
\end{equation*}
Fix $\sigma\in\Gal(\Qbar/F)$ such that $\sigma(\sqrt{d})=-\sqrt{d}$. Let $\tau\in\Gal(\Qbar/F)$ such that $\tau(\sqrt{d})=-\sqrt{d}$ and denote by $s=\sigma^{-1}\tau$. Then for all $\X\in\prescript{}{\eta}{\SL_{2n}(F(\sqrt{d}))}$
\begin{align*}
    &\tau(\X)^\top\eta(\tau)\PP^{-\top}\J_{2n}\PP^{-1}\X=\eta(\tau)\PP^{-\top}\J_{2n}\PP^{-1}\\
    \Leftrightarrow\ &\eta(\sigma s)^{-1}\sigma s(\X)^\top\eta(\sigma s)\PP^{-\top}\J_{2n}\PP^{-1}\X=\PP^{-\top}\J_{2n}\PP^{-1}\\
    \Leftrightarrow\ & \eta(\sigma)^{-1}\sigma(\eta(s)^{-1}s(\X)^\top\eta(s))\eta(\sigma)\PP^{-\top}\J_{2n}\PP^{-1}\X=\PP^{-\top}\J_{2n}\PP^{-1},\\
    \textrm{since}\  &\eta(\sigma)\eta(s)=\eta(s)\eta(\sigma)\in\PSL(2n,\Qbar)\\
    \Leftrightarrow\ & \eta(\sigma)^{-1}\sigma(\eta(s)s(\X)\eta(s)^{-1})^\top\eta(\sigma)\PP^{-\top}\J_{2n}\PP^{-1}\X=\PP^{-\top}\J_{2n}\PP^{-1}\\
    \Leftrightarrow\ &
    \eta(\sigma)^{-1}\sigma(\X)^\top\eta(\sigma)\PP^{-\top}\J_{2n}\PP^{-1}\X=\PP^{-\top}\J_{2n}\PP^{-1},\\
    \textrm{since}\ &\textrm{$s(\sqrt{d})=\sqrt{d}$}\\
    \Leftrightarrow\
    &\sigma(\eta(\sigma)\X\eta(\sigma)^{-1})^\top\PP^{-\top}\J_{2n}\PP^{-1}\X=\PP^{-\top}\J_{2n}\PP^{-1}\\
    \Leftrightarrow\
    &\sigma^*(\X)^\top\PP^{-\top}\J_{2n}\PP^{-1}\X=\PP^{-\top}\J_{2n}\PP^{-1},\\
    \textrm{since}\ &\sigma(\eta(\sigma)\X\eta(\sigma)^{-1})=\sigma^*(\X)\\
    \Leftrightarrow\
    &\sigma^*(\K\X\K^{-1})^\top\K\PP^{-\top}\J_{2n}\PP^{-1}\X=\K\PP^{-\top}\J_{2n}\PP^{-1}\\
    \Leftrightarrow\ &\sigma^*(\overline{\X})^t\J_{2n}^*\X=\J_{2n}^*,\\
    \textrm{where}\ &\J_{2n}^*=\K\PP^{-\top}\J_{2n}\PP^{-1}.
\end{align*}
The matrix $\J_{2n}^*$ is a $\overline{\phantom{s}}\otimes\sigma$-Hermitian matrix. Let $\N$ be the matrix introduced in the proof of Proposition \ref{proplatticeSp}. Since $\sigma^*(\N)=\N$, $\sigma^*(\overline{\N}^t)\J_{2n}^*\N$ is the diagonal matrix defined by
\begin{equation*}
    (\sigma^*(\overline{\N})^t\J_{2n}^*\N)_{ii}=\left\{\begin{array}{ll}
        -4(2n-i-1)!i! & \mbox{if $i$ is odd}\\
        -4(2n-i)!(i-1)! & \mbox{if $i$ is even.}
    \end{array}\right.
\end{equation*} We can see in \S7 of \cite{Lewis_IsometryClassificationHermitianForms} that $\tc\otimes\sigma$-Hermitian forms over $A\otimes_F F(\sqrt{d})$ are classified by their rank, their signatures at real places of $F$ and their discriminant. Hence $\J^*_{2n}$ is equivalent to $-\I_{2n}$. We conclude that
$$\prescript{}{\zeta}{\SL_{2n}}(F)\simeq\SU(\I_n,\tc\otimes\sigma;A\otimes_F F(\sqrt{d})).$$ Proposition \ref{propmorphismofalgebraicgroups} shows that $\tau_{2n}(\Gamma)$ is contained in $$\prescript{}{\zeta}{\SL_{2n}}(\mathcal{O}_F)\simeq\SU(\I_n,\overline{\phantom{s}}\otimes\sigma;\mathcal{O}\otimes\mathcal{O}_F[\sqrt{d}])$$ up to finite index. This is a lattice of $\SL(2n,\mathds{R})$ if and only if $d$ is positive exactly at the real place where $A$ splits.

The converse statement is proven as in the proof of Proposition \ref{proplatticeSO}.
\end{proof}

\subsection{Proof of Theorem \ref{theoclassificationfuchsian}}

\begin{proof}[Proof of Theorem \ref{theoclassificationfuchsian}]
Let $G$ be $\SO(k+1,k)$, $\Sp(2n,\mathds{R})$, $\G_2$ or $\SL(n,\mathds{R})$ with $k\geq2$ and $n\geq3$. Let $\Lambda$ be a uniform lattice of $G$ that contains the image of a Fuchsian representation $\rho:\pi_1(S_g)\to G$ for some $g\geq2$. By definition, $\rho=\tau_n\circ j$ for a suitable $n$ where $j$ is a discrete and faithful embedding of $\pi_1(S_g)$ into $\SL(2,\mathds{R})$. Hence $j(\pi_1(S_g))$ is a lattice in $\SL(2,\mathds{R})$.

By Margulis' Arithmeticity Theorem (Theorem 16.3.1 in \cite{Morris_IntroductionArithmeticGroups}), $\Lambda$ is arithmetic. By Proposition \ref{propGsimplearithmetic}, there exists a totally real number field $F$ such that $\Lambda$ is commensurable with the $\mathcal{O}_F$-points of an $F$-algebraic group which is compact at all real places of $F$ except one. Applying Proposition \ref{propmorphismarithmeticlattices} to $\tau_n(\GL(2,\mathds{R}))<G$, the same holds for $j(\pi_1(S_g))$.

Hence the Theorem follows from Propositions \ref{proplatticeSO}, \ref{proplatticeG2}, \ref{proplatticeSLodd}, \ref{proplatticeSp} and \ref{proplatticeSLeven}.

\end{proof}

\section{Construction of Zariski-dense surface subgroups}

\subsection{Reminders on bending}
\label{subsectionremindersbending}

Let $n\geq3$. Let $\rho:\pi_1(S_g)\to\SL(n,\mathds{R})$ be a Fuchsian representation. Pick $\gamma\in\pi_1(S_g)$ which is represented by a simple closed curve which separates $S_g$ into two surfaces $C$ and $D$. Then $\pi_1(S_g)=\pi_1(C)*_{\gamma}\pi_1(D)$. Let $\B\in\SL(n,\mathds{R})$ such that $\B$ commutes with $\rho(\gamma)$. Note that $\rho(\gamma)$ is diagonalizable over $\mathds{R}$ with distinct eigenvalue. The restriction of $\rho$ to $\pi_1(C)$ together with the conjugate by $\B$ of the restriction of $\rho$ to $\pi_1(D)$ induce a new representation
$$\rho_{\B}:\pi_1(S_g)\to\SL(n,\mathds{R})$$ called the \emph{bending} of $\rho$ by $\B$.

To construct Zariski-dense Hitchin representations in a lattice $\Lambda$, we bend a Fuchsian representation with image in $\Lambda$ (supposing such a representation exists) by a matrix $\B$ in $\Lambda$. If $\B$ has only positive eigenvalues than there is a continuous path from $\I_n$ to $\B$ within the centralizer $\Comm_{\SL(n,\mathds{R})}(\rho(\gamma))$ of $\rho(\gamma)$, which implies that $\rho_{\B}$ is a Hitchin representation. The next lemma gives us a control on the stabilizer of a curve $\gamma$ in $\Lambda$. Note that the image of a non-trivial element under a Hitchin representation is always regular (see Theorem 1.5 in \cite{Labourie_Anosovflowssurfaceandcurves}). 

\begin{lemma}
\label{centralizercocompactlattice}
Let $G$ be a noncompact semisimple Lie group and let $\Lambda$ be a uniform lattice of $G$. Let $\gamma\in\Lambda$ be regular. Then $\Comm_{\Lambda}(\gamma)$ is an abelian group of rank equal to $\rank_{\mathds{R}}G$.
\end{lemma}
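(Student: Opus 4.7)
\medskip\noindent\emph{Plan.} I would prove the lemma in two stages.

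First I would show that $\Comm_\Lambda(\gamma)$ is a uniform lattice in $\Comm_G(\gamma)$. For this I use the classical fact that every element of a uniform lattice in a semisimple Lie group is semisimple (a standard consequence of Godement's compactness criterion). Thus $\gamma$ is semisimple, so its $G$-conjugacy class $C=\{g\gamma g^{-1}:g\in G\}$ is closed in $G$. Since $\Lambda$ is discrete, the $\Lambda$-conjugacy class of $\gamma$ is a discrete, and therefore closed, subset of $C$. The conjugation map $\Phi:G\to G$, $g\mapsto g\gamma g^{-1}$, has fibers equal to the right cosets of $\Comm_G(\gamma)$, and a direct computation shows
\begin{equation*}
\Phi^{-1}\big(\{\lambda\gamma\lambda^{-1}:\lambda\in\Lambda\}\big)=\Lambda\cdot\Comm_G(\gamma).
\end{equation*}
Therefore $\Lambda\cdot\Comm_G(\gamma)$ is closed in $G$, so its image in $\Lambda\backslash G$---the right $\Comm_G(\gamma)$-orbit of $\Lambda e$---is closed. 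As $\Lambda\backslash G$ is compact, this orbit is compact too; it identifies with $\Comm_\Lambda(\gamma)\backslash\Comm_G(\gamma)$, whence $\Comm_\Lambda(\gamma)$ is a uniform lattice in $\Comm_G(\gamma)$.

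Second I would analyze $\Comm_G(\gamma)$. Since $\gamma$ is regular, $\Comm_G(\gamma)$ is a Cartan subgroup of $G$, which is abelian; in particular $\Comm_\Lambda(\gamma)$ is abelian. In the setting to which the lemma is applied, Labourie's Theorem 1.5 (cited in Section \ref{subsectionremindersbending}) ensures $\gamma$ is $\mathds{R}$-regular, so up to conjugation $\Comm_G(\gamma)$ is the centralizer in $G$ of a maximal $\mathds{R}$-split torus $A$. Its identity component then has the form $T\times A$ with $T$ compact and $\dim A=\rank_\mathds{R}G$. Projecting a uniform lattice of $T\times A$ onto $A$ yields a lattice in $\mathds{R}^{\rank_\mathds{R}G}$ with finite kernel in $T$, so $\Comm_\Lambda(\gamma)$ is finitely generated abelian of rank $\rank_\mathds{R}G$.

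The main obstacle is the closed-orbit argument of the first stage, which crucially uses semisimplicity of $\gamma$ (to guarantee that the $G$-conjugacy class $C$ is closed in $G$, and hence that $\Lambda\cdot\Comm_G(\gamma)$ is closed). Once this is set up, the rest follows from standard structure theory of Cartan subgroups in real reductive Lie groups.
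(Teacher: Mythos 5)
Your proof is correct and takes a genuinely different route from the paper's. The paper argues geometrically inside the symmetric space $\X$: it identifies $\Comm_{\Lambda}(\gamma)$ with the stabilizer in $\Lambda$ of the unique maximal flat $\mathfrak{a}$ through $\gamma$, observes that $\mathfrak{a}/\Comm_{\Lambda}(\gamma)$ is a flat manifold $\R^l\times(\mathds{S}^1)^s$ with $l+s=\rank_{\R}G$, and rules out $l\geq1$ by an Arzel\`{a}--Ascoli argument in the compact quotient $\X/\Lambda$ combined with the Flat Strip Theorem, regularity of $\gamma$ forcing the flat produced to coincide with $\mathfrak{a}$. You instead run the standard algebraic ``closed orbit'' argument: semisimplicity of elements of a uniform lattice gives a closed $G$-conjugacy class of $\gamma$, discreteness of $\Lambda$ gives a closed $\Lambda$-conjugacy class, pulling back along the conjugation map shows $\Lambda\cdot\Comm_{G}(\gamma)$ is closed, and cocompactness of $\Lambda\backslash G$ then makes $\Comm_{\Lambda}(\gamma)$ a uniform lattice in the Cartan subgroup $\Comm_{G}(\gamma)$, whose identity component $T\times A$ (with $T$ compact, $\dim A=\rank_{\R}G$) yields both the abelianness and the rank count. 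Note that both arguments use more than mere regularity: they need $\gamma$ to be loxodromic, so that $\Comm_G(\gamma)$ is a maximally split Cartan (equivalently, $\gamma$ lies on a unique maximal flat); in the paper's application this is supplied by Labourie's theorem, which you correctly invoke. Your approach is closer to textbook treatments of centralizers in cocompact lattices, handles the abelianness more explicitly, and avoids the torsion-free reduction the paper makes for convenience; the paper's geometric proof, attributed to Burger--Schroeder via Hamenst\"{a}dt, sidesteps the orbit-map-is-a-homeomorphism point that your argument implicitly uses (closed orbits of a $\sigma$-compact group on a locally compact space give homeomorphisms with the coset space) by working metrically.
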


\begin{remark}
The proof of this lemma follows an argument of Hamenstädt that appears in her talk \cite{Hamenstadt_Specialpointscharactervariety}. She attributes the result to Burger and Schroeder. Note that Lemma \ref{centralizercocompactlattice} can be avoided but is stated her for completeness. Indeed one can use Galois cohomology to compute the centralizer of a given simple closed curve in the lattices of interest explicitly. See \S5.2 in \cite{Audibert_Zariskidensesurfacegroups} where it has been done for non-uniform lattices. The same computations work for uniform lattices.
\end{remark}

\begin{proof}
Let $\X$ be the symmetric space of $G$ of nonpositive curvature. Since $\Lambda$ is cocompact, there exists $r>0$ such that for all $p\in\X$, the covering map $\pi:\X\to\X/\Lambda$ is a diffeomorphism between the balls $\B(p,r)$ and $\B(\pi(p),r)$.

Let $\mathfrak{a}$ be the Cartan subalgebra of the Lie algebra of $G$ such that $\gamma\in\exp(\mathfrak{a})$. It is unique since $\gamma$ is regular. Then $\mathfrak{a}$ embeds in $\X$ as a maximal flat. It turns out that $$\{g\in\Lambda\ |\ \Ad(g)\mathfrak{a}=\mathfrak{a}\}=\Comm_{\Lambda}(\gamma)=:\Gamma.$$
For simplicity, we suppose that $\Lambda$ is torsion-free. Consider $\mathfrak{a}/\Gamma$. It is a flat manifold and as such isometric to $\mathds{R}^l\times(\mathds{S}^1)^s$ where $l+s=\dim(\mathfrak{a})=\rank_{\mathds{R}}(G)$. In fact $s$ is the rank of $\Gamma$. Thus we want to show that $l=0$.

Suppose that $l\geq1$. Thus there is a copy of $\mathds{R}$ in $\mathfrak{a}/\Gamma$. Pick a geodesic representative of $\gamma$ in $\mathfrak{a}/\Gamma$. Denote by $\gamma_n$ the translate of $\gamma$ by length $n$ along a fixed copy of $\mathds{R}$. We now see $\mathfrak{a}/\Gamma$ as immersed in $\X/\Lambda$. Since $\X/\Lambda$ is compact Arzelà–Ascoli's Theorem tells us that the sequence of geodesics $\gamma_n$ converges uniformly in $\X/\Lambda$.
Hence there exist $N<M$ such that the Hausdorff distance between $\gamma_N$ and $\gamma_M$ is less that $\frac{r}{2}$.

Pick a lift of $\gamma_N$ in $\mathfrak{a}$ and denote it $\widetilde{\gamma_N}$. Consider a lift of $\gamma_M$ in $\X$ which is at distance less than $\frac{r}{2}$ from $\widetilde{\gamma_N}$ at some point and denote it $\widetilde{\gamma_M}$.
There exists an element $g\in\Lambda$ that sends the $(M-N)$-translate of $\widetilde{\gamma_N}$ to $\widetilde{\gamma_M}$.
The curve $\widetilde{\gamma_M}$ stays in a tubular neighborhood of $\widetilde{\gamma_N}$ of radius $\frac{r}{2}$.
The Flat Strip Theorem (Proposition 5.1. in \cite{Eberlein_Visibilitymanifolds}) implies that $\widetilde{\gamma_N}$ and $\widetilde{\gamma_M}$ lie in a maximal flat of $\X$. Since $\widetilde{\gamma_N}$ is regular, it has to be $\mathfrak{a}$.
Hence $g$ preserves $\mathfrak{a}$ so $g\in\Gamma$. On the other hand, translates of $\gamma$ in $\mathfrak{a}/\Gamma$ are supposed to be at distance at least $1$ from each other. This is a contradiction.
\end{proof}

To control the Zariski-closures of our bent representations, we need the following result. For a proof, see \S4 of \cite{Audibert_Zariskidensesurfacegroups}.

\begin{lemma}
\label{lemmaZariskidensity}
Let $\B\in\SL(n,\mathds{R})$ which commutes with $\rho(\gamma)$ and with positive eigenvalues. Then $\rho_{\B}$ has Zariski-closure one of the following:
\begin{itemize}
    \setlength\itemsep{0cm}
    \item $\tau_n(\SL(2,\mathds{R}))$ if and only if $\B\in\tau_n(\GL(2,\mathds{R}))$,
    \item $\Sp(\J_n,\mathds{R})$ if and only if $n=2k$, $\B\in\Sp(\J_n,\mathds{R})$ and $\B\not\in\tau_n(\GL(2,\mathds{R}))$,
    \item $\SO(\J_{n},\mathds{R})$ if and only if $n=2k+1$, $\B\in\SO(\J_{n},\mathds{R})$, $\B\not\in\tau_n(\GL(2,\mathds{R}))$ and if $n=7$, $\B\not\in\emph{\textbf{G}}_2(\mathds{R})$ ,
    \item \emph{$\textbf{G}_2(\mathds{R})$} if and only if $n=7$, $\B\in\emph{\textbf{G}}_2(\mathds{R})$ and $\B\not\in\tau_7(\GL(2,\mathds{R}))$,
    \item $\SL(n,\mathds{R})$ otherwise.
\end{itemize}
\end{lemma}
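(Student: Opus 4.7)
The plan is to show that $\rho_\B$ is a Hitchin representation, establish that its Zariski closure $H$ contains the principal $\tau_n(\SL(2,\mathds{R}))$ together with a conjugate, invoke the classification of reductive subgroups of $\SL(n,\mathds{R})$ containing the principal $\SL(2,\mathds{R})$ to reduce $H$ to a short list of candidates, and then detect which case occurs by checking which invariant tensors $\B$ preserves.

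First, I would verify that $\rho_\B$ is Hitchin. Because $\rho(\gamma)$ is regular, its centralizer in $\SL(n,\mathds{R})$ is a connected abelian torus; since $\B$ has positive eigenvalues and commutes with $\rho(\gamma)$, it lies in this torus and can be written as $\B=\exp(X)$. The family $t\mapsto\rho_{\exp(tX)}$ is then a continuous path of representations connecting the Fuchsian $\rho$ to $\rho_\B$, so $\rho_\B$ lies in the Hitchin component.

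Next I would establish a lower bound for $H$. The curve $\gamma$ separates $S_g$ into surfaces $C$ and $D$ of positive genus, so $\pi_1(C)$ and $\pi_1(D)$ are free of rank at least $2$ and their images under $j$ are non-elementary discrete subgroups of $\PSL(2,\mathds{R})$, hence Zariski-dense in $\SL(2,\mathds{R})$. Consequently $H$ contains both $\tau_n(\SL(2,\mathds{R}))=\rho_\B(\pi_1(C))^{\mathrm{Zar}}$ and its conjugate $\B\,\tau_n(\SL(2,\mathds{R}))\,\B^{-1}=\rho_\B(\pi_1(D))^{\mathrm{Zar}}$. Now invoke the classification (due to Guichard, cf.\ \cite{Wienhard_InvitationHigherTeichmuller}) of connected reductive subgroups of $\SL(n,\mathds{R})$ containing the principal $\SL(2,\mathds{R})$: they are exactly $\tau_n(\SL(2,\mathds{R}))$, $\Sp(\J_n,\mathds{R})$ when $n$ is even, $\SO(\J_n,\mathds{R})$ when $n$ is odd, $\textbf{G}_2(\mathds{R})$ when $n=7$, and $\SL(n,\mathds{R})$ itself. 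Since Hitchin representations have connected reductive Zariski closure, $H$ belongs to this list.

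Finally, I would perform the case distinction. If $\B\in\tau_n(\GL(2,\mathds{R}))$ then $\B$ normalizes $\tau_n(\SL(2,\mathds{R}))$, forcing $H=\tau_n(\SL(2,\mathds{R}))$. Otherwise $H$ strictly contains $\tau_n(\SL(2,\mathds{R}))$, and each remaining candidate in the list is characterised in $\SL(n,\mathds{R})$ as the stabilizer of specific tensors: $\J_n$ (symmetric for $n$ odd, antisymmetric for $n$ even) for $\SO$ or $\Sp$, together with the cross product of Definition \ref{definitionG2} when $n=7$. Since $\tau_n(\SL(2,\mathds{R}))$ already preserves $\J_n$, and also preserves this cross product when $n=7$, the group $H$ preserves precisely those tensors that $\B$ also preserves, yielding the stated equivalences. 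The main obstacle is the classification of intermediate subgroups invoked in the second step; an alternative would be to analyze the adjoint decomposition of $\mathfrak{sl}_n$ under the principal $\mathfrak{sl}_2$ and identify which direct sums of its isotypic components close under the Lie bracket, but this becomes dimension-sensitive, particularly around the exceptional embedding $\mathfrak{g}_2\subset\mathfrak{so}_7$.
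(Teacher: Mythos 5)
Your proposal is correct and follows essentially the same route as the paper: the paper defers the proof of this lemma to \S4 of \cite{Audibert_Zariskidensesurfacegroups} and explicitly states that it is ``a consequence of the classification of Zariski-closures of Hitchin representations by Guichard'' (Theorem~\ref{theoremGuichard}), which is exactly the classification you invoke, merely repackaged as the classification of reductive subgroups of $\SL(n,\mathds{R})$ containing the principal $\SL(2,\mathds{R})$. Two small points worth cleaning up: the centralizer of a regular element of $\SL(n,\mathds{R})$ is not connected (it has $2^{n-1}$ components), but since $\B$ has positive eigenvalues it lies in the identity component, which is all you use; and the step ``$H$ preserves precisely the tensors that $\B$ preserves'' deserves a Schur's-lemma remark, namely that the $\tau_n(\SL(2,\mathds{R}))$-invariant bilinear form on $\mathds{R}^n$ is unique up to scalar, so $\B\tau_n(\SL(2,\mathds{R}))\B^{-1}\subset\Sp(\J_n)$ (resp.\ $\SO(\J_n)$) forces $\B^{\top}\J_n\B=c\J_n$ with $c$ a positive real satisfying $c^n=1$, hence $c=1$ and $\B\in\Sp(\J_n)$ (resp.\ $\SO(\J_n)$).
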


Lemma \ref{lemmaZariskidensity} is a consequence of the classification of Zariski-closures of Hitchin representations by Guichard. We recall it here for the sake of completeness.

\begin{theorem}[Guichard \cite{Guichard_Zariskiclosurepositive}, see also Sambarino \cite{Sambarino_InfinitesimalZariskiClosure}]
\label{theoremGuichard}
Let $\rho:\pi_1(S)\to\SL(n,\mathds{R})$ be a Hitchin representation. Then the Zariski-closure of $\rho$ is either $\SL(n,\mathds{R})$, a principal $\SL(2,\mathds{R})$ or conjugated to one of the following:
\begin{itemize}
    \setlength\itemsep{0cm}
    \item $\Sp(2k,\mathds{R})$ if $n=2k$,
    \item $\SO(k+1,k)$ if $n=2k+1$,
    \item \emph{$\textbf{G}_2(\mathds{R})$} if $n=7$.
\end{itemize}
\end{theorem}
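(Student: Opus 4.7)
The plan is to combine three structural properties of Hitchin representations in order to pin down the Zariski closure $H$ of $\rho$. First, $\rho$ is semisimple by virtue of being Anosov (Labourie), so $H$ is reductive. Second, Labourie's hyperconvex Frenet map $\partial \pi_1(S_g) \to \text{Flag}(\mathds{R}^n)$ forces $H$ to act irreducibly on $\mathds{R}^n$ and, in fact, on each exterior power $\wedge^k \mathds{R}^n$: any proper invariant subspace would contradict the hyperconvexity of the osculating flag curve. Third, $H$ contains a conjugate of $\tau_n(\SL(2,\mathds{R}))$; this is the subtle step, which I would tackle by deforming $\rho$ within the Hitchin component to a Fuchsian representation (whose Zariski closure is exactly $\tau_n(\SL(2,\mathds{R}))$) and using Sambarino's infinitesimal analysis \cite{Sambarino_InfinitesimalZariskiClosure} to track how $H$ varies along the path.

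Granted these three properties, the theorem reduces to a question in representation theory: which connected simple complex Lie subgroups of $\SL(n,\mathds{C})$ act irreducibly on $\mathds{C}^n$ and contain the principal $\SL(2,\mathds{C})$? Equivalently, one seeks simple complex Lie algebras $\mathfrak{h}$ admitting a faithful irreducible representation of dimension $n$ whose restriction to the principal $\mathfrak{sl}_2 \subset \mathfrak{h}$ is the $n$-dimensional irreducible of $\mathfrak{sl}_2$. Dynkin's classification of simple subalgebras of $\mathfrak{sl}_n$, together with the Kostant theory of principal $\mathfrak{sl}_2$ triples, shows that the only possibilities are $\mathfrak{sl}_n$, $\mathfrak{sp}_{2k}$ (for $n=2k$), $\mathfrak{so}_{2k+1}$ (for $n=2k+1$), and $\mathfrak{g}_2$ (for $n=7$), each acting via its standard representation. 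The semisimple non-simple case is excluded because the principal $\mathfrak{sl}_2$ is itself simple and must lie in a single simple factor, which would contradict the irreducibility of the tensor action on $\mathds{R}^n$. Passing to real forms and selecting those admitting an embedding into $\SL(n,\mathds{R})$ forces the split real forms, yielding the list in the statement.

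The main obstacle is the third step. Zariski closure is only upper semi-continuous on character varieties, so one cannot conclude \emph{a priori} that $H$ contains a principal $\SL(2,\mathds{R})$ merely from the fact that a nearby Fuchsian representation has this property. Sambarino's infinitesimal framework resolves this by describing the Lie algebra of $H$ as the span of Jordan projections of elements of $\rho(\pi_1(S_g))$; the Anosov property combined with the hyperconvex Frenet curve then guarantees that such projections include a regular (and in fact principal) nilpotent direction, producing a principal $\mathfrak{sl}_2$ triple inside the Lie algebra of $H$ as desired.
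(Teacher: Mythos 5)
The paper does not prove this theorem: it is stated and used as a black box, cited to Guichard and to Sambarino \cite{Sambarino_InfinitesimalZariskiClosure}. So there is no in-paper argument to compare against, and what you have written is an attempted reconstruction of the cited proof.

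Your broad strategy---reduce to (i) irreducibility of the Zariski closure $H$ on $\mathds{R}^n$, (ii) the presence of a principal $\SL_2$ in $H$, and then (iii) invoke Dynkin/Kostant to classify the irreducible subalgebras of $\mathfrak{sl}_n$ containing a principal $\mathfrak{sl}_2$---is the right skeleton and does match the shape of Sambarino's argument. But two specific claims are wrong, and the second one is a genuine gap at the crucial step.

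First, hyperconvexity does not force $H$ to act irreducibly on every exterior power $\wedge^k\mathds{R}^n$. This is plainly false for the groups that actually occur: $\Sp(2k,\mathds{R})$ does not act irreducibly on $\wedge^2\mathds{R}^{2k}$ (the symplectic form splits off a trivial line), and $\SO(k+1,k)$ does not act irreducibly on $\wedge^2\mathds{R}^{2k+1}$ in general. Only irreducibility of $H$ on $\mathds{R}^n$ itself is needed and is what the Frenet curve gives.

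Second, and more seriously, the mechanism you describe for producing a principal $\SL_2$ inside $H$ cannot work. Jordan projections take values in the closed Weyl chamber of a Cartan subspace $\mathfrak{a}$, so any span of them is contained in a single Cartan subalgebra and has dimension at most the rank of $G$. They cannot generate the Lie algebra of $H$, and in particular they cannot produce a regular nilpotent element, since the Jordan projection of a hyperbolic element is a (regular) \emph{semisimple} direction. The point of Sambarino's infinitesimal analysis is rather that the hyperconvex Frenet curve and its osculating flags force the Lie algebra of $H$ to contain the simple positive root vectors (equivalently a tangential structure compatible with positivity), and one then applies Jacobson--Morozov to the resulting regular nilpotent to obtain the principal $\mathfrak{sl}_2$-triple. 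Your acknowledgment that the naive deformation argument fails is correct, but the replacement you offer describes a different, and inoperative, mechanism, so the central step remains unjustified.
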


\subsection{Strong Approximation Theorem}

In this part we recall the Strong Approximation Theorem and set up some technical lemmas so that we can apply the theorem to the representations we will build. It will be needed in the last part to prove that our constructions provide infinitely many mapping class group orbits of representations.

If $R$ is a ring and $a\in R\setminus\{0\}$ then we denote by $R_a$ the localization of $R$ at the multiplicative set $\{a^n|n\in\mathds{N}\}$. The ideals in $R_a$ are in bijective correspondence with ideals of $R$ that do not contain any power of $a$.

\begin{theorem}[Strong Approximation, Weisfeiler \cite{Weisfeiler_StrongapproximationZariskidensesubgroups}]
\label{theostrongapproximation}
Let $\G$ be a connected $\Qbar$-algebraic group which is almost simple and simply connected. Let $\Gamma$ be a finitely generated Zariski-dense subgroup of $\G(\Qbar)$. Denote by $$R=\mathds{Z}[\Tr(\Ad(\Gamma))].$$

Then there exists $a\in R$, a finite index subgroup $\Gamma'<\Gamma$ and a structure $\G_0$ of a group scheme over $R_a$ on $\G$ such that $\Gamma'<\G_0(R_a)$ and $\Gamma'$ is dense in $$\varprojlim_{|R_a/I|<\infty}\G_0\big(R_a/I\big)$$ where the projective limit is over ideals of $R_a$.
\end{theorem}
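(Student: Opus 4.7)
The plan is to reduce the problem to a combination of two classical ingredients: a Nori-style structure theorem for Zariski-dense subgroups reduced modulo primes, and a Goursat-type lemma for products of finite quasi-simple groups of Lie type. First I would spread out the data. Since $\Gamma$ is finitely generated, pick generators $g_1,\ldots,g_m$ of $\Gamma$. The ring $R=\mathds{Z}[\Tr(\Ad(\Gamma))]$ is a finitely generated $\mathds{Z}$-algebra, because the traces of all adjoint words in the $g_i$ are polynomial expressions in finitely many of them (the coordinate ring of the adjoint representation is finitely generated). Working in the $R$-subalgebra of $\M_{\dim\G}(\Qbar)$ generated by $\Ad(\Gamma)$, and using that this algebra is a full matrix algebra by Zariski-density together with Burnside's theorem, I can find $a\in R$ such that $\Ad$ spreads out to a closed embedding of a group scheme $\G_0$ over $R_a$ into $\GL_N$, with $\G_0$ smooth with connected fibres of the expected type, and such that each $g_i$ defines an element of $\G_0(R_a)$. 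Enlarging $a$ further I ensure that $\G_0$ is quasi-split (or at least reductive with constant Dynkin type) on every fibre.

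The core step is to show that for every maximal ideal $\mathfrak{m}\lhd R_a$, the image of a suitable finite-index subgroup $\Gamma'<\Gamma$ in $\G_0(R_a/\mathfrak{m})$ is all of $\G_0(R_a/\mathfrak{m})$. This is where I would invoke Nori's theorem on Lie algebras generated by unipotent elements of $\GL_N(\mathds{F}_p)$, or equivalently the specialization arguments of Matthews--Vaserstein--Weisfeiler. The input is Zariski-density over $\Qbar$; the output is that, after inverting finitely many additional primes (which can be absorbed into $a$), the mod-$\mathfrak{m}$ reduction of $\Gamma$ contains the subgroup generated by the unipotent radicals of opposite Borel subgroups of $\G_0$ over $R_a/\mathfrak{m}$, and this already fills out all of $\G_0(R_a/\mathfrak{m})$ because $\G_0$ is simply connected and almost simple.

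Given surjectivity at every single finite quotient, I would then pass to the inverse limit by induction on the support of $I$. Write $I=\mathfrak{m}_1^{e_1}\cdots\mathfrak{m}_r^{e_r}$ with distinct maximal ideals; by the Chinese remainder theorem the target factors as the product over $i$. A Goursat-type argument reduces the claim to the statement that the groups $\G_0(R_a/\mathfrak{m}_i^{e_i})$ have, after passing to a further finite-index subgroup of $\Gamma$ (namely the first congruence kernel), no common simple quotient. For large residue characteristic this is immediate from the fact that each $\G_0(R_a/\mathfrak{m}_i^{e_i})$ has as composition factors only the unique simple group of Lie type over $R_a/\mathfrak{m}_i$ and its abelian/central pieces, and these are distinguished by their orders. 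For the remaining finitely many primes I invert them to enlarge $a$.

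The main obstacle is the last step: controlling coincidences of composition factors among the groups of Lie type $\G_0(R_a/\mathfrak{m}_i)$. This requires the classification of isomorphisms between finite simple groups of Lie type and a careful treatment of the center of $\G_0$ and of ramification above small rational primes. This is precisely the part of Weisfeiler's argument where the simply connected hypothesis and the list of exceptional isomorphisms are used in full, and handling it cleanly is what forces the need to pass to a finite-index subgroup $\Gamma'<\Gamma$ and to invert the further finite set of primes packaged into $a$.
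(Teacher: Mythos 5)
The paper does not prove Theorem~\ref{theostrongapproximation}: it is quoted from Weisfeiler \cite{Weisfeiler_StrongapproximationZariskidensesubgroups} and used as a black box, so there is no in-paper argument to compare against. Your sketch does follow the standard architecture of the known proofs (spread out to a group scheme over $R_a$, establish surjectivity modulo each maximal ideal via Nori or Matthews--Vaserstein--Weisfeiler, then assemble the inverse limit by the Chinese Remainder Theorem and a Goursat-type argument), which is the right skeleton.

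Two points in the sketch do not work as written. First, since $\G$ is simply connected and almost simple, its centre is a nontrivial finite group scheme which $\Ad$ kills; spreading out $\Ad$ therefore produces a model of the adjoint group, not of $\G$, and you would need to pass to the simply connected cover of that model over a further localization (or spread out a faithful representation of $\G$) while still controlling the ring of definition. Second, and more seriously, the Goursat step: the claim that the composition factors of the groups $\G_0(R_a/\mathfrak{m}_i^{e_i})$ are ``distinguished by their orders'' fails whenever two distinct maximal ideals $\mathfrak{m}_i\neq\mathfrak{m}_j$ have isomorphic residue fields, which happens for infinitely many rational primes once $R$ has degree greater than one over $\mathds{Z}$. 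In that situation $\G_0(R_a/\mathfrak{m}_i)$ and $\G_0(R_a/\mathfrak{m}_j)$ are abstractly isomorphic finite groups of Lie type, and the image of $\Gamma'$ could a priori sit inside the graph of such an isomorphism; inverting finitely many primes cannot remove this. What actually excludes the diagonal is the specific hypothesis $R=\mathds{Z}[\Tr(\Ad(\Gamma))]$: a compatible isomorphism would (by Steinberg's isomorphism theorem) be induced by a field isomorphism $R_a/\mathfrak{m}_i\to R_a/\mathfrak{m}_j$, forcing the traces $\Tr(\Ad(\gamma))$ to land in the graph of that field isomorphism, which contradicts the fact that these traces generate $R$ and hence surject onto $R_a/\mathfrak{m}_i\times R_a/\mathfrak{m}_j$. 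This trace argument is the whole point of the hypothesis on $R$ and is missing from your proposal.
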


\begin{remark}
\label{remarksurjectionstrongapproximation}
In the context of Theorem \ref{theostrongapproximation}, let $I$ be an ideal of $R_a$ such that $R_a/I$ is finite and denote by $$\pi:\G_0(R_a)\to\G_0\big(R_a/I\big)$$ the canonical map. The theorem implies that $\pi(\Gamma')=\G_0(R_a/I)$.
\end{remark}

Theorem \ref{theostrongapproximation} is needed to distinguish mapping class groups orbits of representations. Unfortunately, the ring $R$ depends on $\Gamma$. We thus need to show that, in our setting, this dependence can be removed. This is done in Proposition \ref{propstrongapproximationadapted}.

\begin{definition}
Let $F$ be a number field. An \emph{order} of $F$ is a subring $R\subset\mathcal{O}_F$ which is also a $\mathds{Z}$-module of rank $[F:\mathds{Q}]$. \end{definition}

\begin{lemma}
\label{lemmatraceorder}
Let $F$ be a totally real number field and $\G$ be a connected semisimple $F$-algebraic group which is compact over all real places of $F$ except one. Let $\Gamma<\G(\mathcal{O}_F)$ be a finitely generated subgroup which is Zariski-dense in $\G$. Then $\mathds{Z}[\Tr(\Ad(\Gamma))]$ is an order of $F$.
\end{lemma}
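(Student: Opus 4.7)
The plan is to show that $R := \mathds{Z}[\Tr(\Ad(\Gamma))]$ sits inside $\mathcal{O}_F$ as a subring whose field of fractions equals $F$; this is equivalent to being an order of $F$.

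First I would establish the integrality and finite generation. Applying Proposition \ref{propmorphismofalgebraicgroups} to the $F$-morphism $\Ad:\G\to\GL(\mathfrak{g})$, we get $\Ad(\G(\mathcal{O}_F))<\GL(\mathfrak{g})(\mathcal{O}_F)$ up to commensurability (for a suitable $\mathcal{O}_F$-lattice in $\mathfrak{g}$). Hence up to replacing $\Gamma$ by a finite-index subgroup---which does not change the Zariski closure or alter the field generated by traces---we have $\Tr(\Ad(\Gamma))\subset\mathcal{O}_F$, so $R\subset\mathcal{O}_F$. Since $\mathcal{O}_F$ is a free $\mathds{Z}$-module of rank $[F:\mathds{Q}]$ and $\mathds{Z}$ is Noetherian, $R$ is a finitely generated $\mathds{Z}$-submodule of $\mathcal{O}_F$. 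Thus $R$ is automatically an order in the number field $k:=\mathrm{Frac}(R)\subset F$, of $\mathds{Z}$-rank $[k:\mathds{Q}]$, and the lemma reduces to proving $k=F$.

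Next I would descend $\G$ to a $k$-algebraic group using Weisfeiler's Strong Approximation Theorem (Theorem \ref{theostrongapproximation}), applied after passing to the simply connected cover of an almost simple factor if necessary (the traces of $\Ad$ of $\Gamma$ are unchanged by isogeny, and the general semisimple case reduces to the almost simple one). This yields $a\in R$ and a group scheme $\G_0$ over $R_a$, an integral structure on $\G$ in the sense that $\G_0\otimes_{R_a}\overline{\mathds{Q}}\simeq\G$. Base-changing along the inclusion $R_a\hookrightarrow k$ produces a $k$-algebraic group $\G_k:=\G_0\otimes_{R_a}k$ satisfying $\G_k\otimes_k F\simeq\G$. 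In other words, $\G$ is already defined over the subfield $k$.

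Finally I would derive a contradiction if $k\subsetneq F$, exploiting the ``one non-compact real place'' hypothesis. Let $v_0$ be the unique real place of $F$ at which $\G(F_{v_0})$ is non-compact, and let $\tau_0:=v_0|_k$. Because $F$ is totally real and $[F:k]\geq 2$, there exists a distinct real place $v_1\neq v_0$ of $F$ with $v_1|_k=\tau_0$. For any real place $v$ of $F$ above $\tau_0$, viewing $\mathds{R}$ as a $k$-algebra via $\tau_0$, we have
\begin{equation*}
    \G(F_v)=(\G_k\otimes_k F)(F_v)=\G_k(F_v)=\G_k(\mathds{R}),
\end{equation*}
which depends only on $\tau_0$. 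Hence $\G(F_{v_0})\simeq\G(F_{v_1})$ as real Lie groups, contradicting that the former is non-compact while the latter is compact. Therefore $k=F$, proving that $R$ is an order of $F$.

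The main obstacle is the descent step: verifying that strong approximation can be invoked (which requires almost simplicity and simple connectedness, handled by reducing to simple factors and their simply connected covers) and extracting from its output the fact that $\G$ actually descends as an algebraic group to $\mathrm{Frac}(R)$. Once this descent is in hand, the asymmetry between ``$F$ totally real with one distinguished place'' and ``$\G$ coming from a $k$-group, so only sensitive to places of $k$'' gives the conclusion for free.
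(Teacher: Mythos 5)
Your proof follows the same high-level plan as the paper --- reduce to showing $k:=\mathrm{Frac}(R)=F$, descend $\G$ to a $k$-group, and then derive a contradiction from the presence of two real places of $F$ above the same place of $k$ giving one compact and one non-compact real form --- but it uses a genuinely different tool for the descent step, and this is where the approaches diverge in cost.

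The paper's proof invokes Vinberg's theorem (Theorem 1 of \cite{Vinberg_Ringsdefinitniondensesubgroups}) directly: it produces a basis of $\mathfrak{g}$ in which $\Ad(\Gamma)$ has entries in $K=\mathds{Q}(\Tr(\Ad(\Gamma)))$, and then Borel's result on fields of definition (Theorem 14.4, Chapter AG of \cite{Borel_Linearalgebraicgroups}) gives that the adjoint group $\Ad(\G)$ is defined over $K$. Vinberg's theorem is tailor-made for exactly this: it works for semisimple groups, it applies to $\Ad(\Gamma)$ directly, and it has no simply-connectedness hypothesis. You instead invoke Weisfeiler's Strong Approximation Theorem. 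This can be made to work, since the integral model $\G_0$ over $R_a$ in Weisfeiler's theorem does descend $\G$ (as a $\Qbar$-group) to $k$, and the fact that $\Gamma'$ sits in both $\G_0(R_a)$ and $\G(F)$ as a common Zariski-dense subgroup forces $\G_k\otimes_k F\simeq\G$ as $F$-groups (by the standard argument that a $\Qbar$-isomorphism fixing a Zariski-dense set pointwise is Galois-equivariant, hence defined over $F$). But you should make that compatibility explicit: the statement ``$\G_k\otimes_k F\simeq\G$'' is asserted, not derived. Moreover Weisfeiler's theorem requires $\G$ almost simple and simply connected, and your handling of this --- ``passing to the simply connected cover of an almost simple factor if necessary'' --- is glossed over. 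Passing to the cover requires replacing $\Gamma$ by a finite-index subgroup that lifts, and reducing the semisimple case to an almost simple factor requires arguing about projections and traces, neither of which is spelled out. The paper avoids all of this by using the right-sized theorem. Your preliminary step (replacing $\Gamma$ by a finite-index subgroup to get integrality of traces) also has a small gap: once you have $\Tr(\Ad(\Gamma_0))\subset\mathcal{O}_F$ for a finite-index normal $\Gamma_0$, you should either argue that a $\Gamma_0$-stable $\mathcal{O}_F$-lattice in $\mathfrak{g}$ can be enlarged to a $\Gamma$-stable one (sum over coset representatives), or else simply prove $\mathds{Q}(\Tr(\Ad(\Gamma_0)))=F$ and conclude $F\subset\mathds{Q}(\Tr(\Ad(\Gamma)))\subset F$ afterwards, rather than asserting that finite-index passage does not change the field of traces (which, absent Vinberg, is not a priori obvious). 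The final contradiction step is correct and matches the paper's.
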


\begin{proof}
Denote by $K=\mathds{Q}(\Tr(\Ad(\Gamma)))\subset F$. We want to show that $K=F$. By Theorem 1 in Vinberg \cite{Vinberg_Ringsdefinitniondensesubgroups}, there is a $\mathds{C}$-basis $\beta$ of $\mathfrak{g}$, the Lie algebra of $\G(\mathds{C})$, such that for all $\gamma\in\Gamma$ $$\Mat_{\beta}(\Ad(\gamma))\in\GL(n,K)\ \textrm{with $n=\dim_{\mathds{C}}\mathfrak{g}$}.$$

Since $\Gamma$ is Zariski-dense in $\G$, $\Ad(\Gamma)$ is Zariski-dense in $\Ad(\G)$. Hence $\Ad(\G(\Qbar))$ is defined over $K$ (see Theorem 14.4 in Chapter AG of Borel \cite{Borel_Linearalgebraicgroups}). Denote by $\sigma$ the real place of $F$ such that $\G(F_{\sigma})$ is non-compact. Suppose $K\neq F$. There is a real place $\iota$ of $F$ different from $\sigma$ such that $\sigma_{|K}=\iota_{|K}$. Since $\G$ is defined over $K$, $\G(F_{\sigma})\simeq\G(F_{\iota})$ while over $\mathds{R}$ one is compact and the other is not. This is a contradiction. Hence $K=F$.

Now $\mathds{Z}[\Tr(\Ad(\Gamma))]$ is a finitely generated torsion free $\mathds{Z}$-module, so it is free. Since
$$\mathds{Z}[\Tr(\Ad(\Gamma))]\otimes_{\mathds{Z}}\mathds{Q}\simeq\mathds{Q}(\Tr(\Ad(\Gamma))),$$ $\mathds{Z}[\Tr(\Ad(\Gamma))]=F$ is of rank $[F:\mathds{Q}]$. Thus it is an order of $F$.
\end{proof}

\begin{lemma}
\label{lemmaprimeidealsinorder}
Let $R\subset\mathcal{O}_F$ be an order of a number field $F$. Then for all but finitely many prime ideals $\mathfrak{p}\subset\mathcal{O}_F$ 
\begin{equation}
\label{equationprimeideal}
    R\!\raisebox{-.65ex}{\ensuremath{/R\cap\mathfrak{p}}}=\mathcal{O}_F\!\raisebox{-.65ex}{\ensuremath{/\mathfrak{p}}}.
\end{equation}
\end{lemma}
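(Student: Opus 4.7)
The plan is to exploit the fact that an order $R\subset\mathcal{O}_F$ has \emph{finite index} as a $\mathds{Z}$-module in $\mathcal{O}_F$, so the comparison of the two residue rings at a prime $\mathfrak{p}$ is controlled by a single integer.

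First I would set $m=[\mathcal{O}_F:R]$, which is finite since $R$ and $\mathcal{O}_F$ are both free $\mathds{Z}$-modules of rank $[F:\mathds{Q}]$. Because $m$ annihilates the finite abelian group $\mathcal{O}_F/R$, one has the crucial inclusion $m\mathcal{O}_F\subset R$. I would then declare the finite exceptional set to be the set of prime ideals $\mathfrak{p}\subset\mathcal{O}_F$ dividing $m\mathcal{O}_F$; this set is finite because $m\mathcal{O}_F$ has only finitely many prime divisors in a Dedekind domain.

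Next I would verify the equality \eqref{equationprimeideal} for every prime $\mathfrak{p}$ with $m\notin\mathfrak{p}$. Since $\mathcal{O}_F/\mathfrak{p}$ is a field and $m$ has nonzero image in it, $m$ is invertible modulo $\mathfrak{p}$. Given any $x\in\mathcal{O}_F$, pick $y\in\mathcal{O}_F$ with $my\equiv x\pmod{\mathfrak{p}}$; then $my\in m\mathcal{O}_F\subset R$, which shows $x\in R+\mathfrak{p}$. Hence $\mathcal{O}_F=R+\mathfrak{p}$.

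Finally, the inclusion $R\hookrightarrow\mathcal{O}_F$ induces a ring homomorphism $R/(R\cap\mathfrak{p})\to\mathcal{O}_F/\mathfrak{p}$ which is obviously injective, and the second isomorphism theorem applied to $R+\mathfrak{p}=\mathcal{O}_F$ gives the surjectivity, yielding the desired isomorphism. There is no real obstacle: once one notes the containment $m\mathcal{O}_F\subset R$ the argument is a one-line application of the second isomorphism theorem. The only care needed is to avoid the finitely many primes dividing $m$, which is exactly the content of the statement.
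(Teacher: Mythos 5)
Your proof is correct, and it takes a genuinely more direct route than the paper's. Both arguments hinge on the finiteness of $m=[\mathcal{O}_F:R]$, but you exploit it constructively: the observation that $m\mathcal{O}_F\subset R$ lets you name the exceptional set explicitly (primes $\mathfrak{p}$ containing $m$, equivalently primes dividing the conductor-like ideal $m\mathcal{O}_F$), prove the sum $R+\mathfrak{p}=\mathcal{O}_F$ for every other prime by inverting $m$ modulo $\mathfrak{p}$, and conclude by the second isomorphism theorem. The paper instead argues by contradiction: assuming infinitely many failing primes, it selects one with residue characteristic $p>m$, observes that $R/(R\cap\mathfrak{p})$ is then a proper subfield of $\mathcal{O}_F/\mathfrak{p}$, and derives a contradiction by counting --- the quotient abelian group would have at least $p$ elements, forcing $|\mathcal{O}_F/R|\geq p>m$. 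Your version is preferable in that it is constructive, identifies exactly which primes must be avoided, and avoids the slightly awkward counting step; the paper's version buys nothing extra here. Both are sound.
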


\begin{proof}
Suppose not. Since $R$ is an order of $F$, the index of $R$ in $\mathcal{O}_F$ is finite. Denote it by $n$. Let $\mathfrak{p}\subset\mathcal{O}_F$ be a prime ideal that does not satisfy \eqref{equationprimeideal} such that its residue field has characteristic $p>n$. It exists since for every prime number, there is only finitely many prime ideals in $\mathcal{O}_F$ that contain it.

One has
\begin{equation*}
    R\!\raisebox{-.65ex}{\ensuremath{/R\cap\mathfrak{p}}}\subset\mathcal{O}_F\!\raisebox{-.65ex}{\ensuremath{/\mathfrak{p}}}.
\end{equation*} As a subring of a finite field, $R/R\cap\mathfrak{p}$ is a field. Recall that
\begin{equation*}
    R\mathfrak{p}\!\raisebox{-.65ex}{\ensuremath{/\mathfrak{p}}}\simeq R\!\raisebox{-.65ex}{\ensuremath{/R\cap\mathfrak{p}}}.
\end{equation*}
Hence
\begin{equation*}
    \left(\mathcal{O}_F\!\raisebox{-.65ex}{\ensuremath{/\mathfrak{p}}}\right)\big/\left(R\!\raisebox{-.65ex}{\ensuremath{/R\cap\mathfrak{p}}}\right)\simeq\left(\mathcal{O}_F\!\raisebox{-.65ex}{\ensuremath{/\mathfrak{p}}}\right)\big/\left(R\mathfrak{p}\!\raisebox{-.65ex}{\ensuremath{/\mathfrak{p}}}\right)\simeq\mathcal{O}_F\!\raisebox{-.65ex}{\ensuremath{/R\mathfrak{p}}}.
\end{equation*}
The latter must contain at least $p$ elements, so $\mathcal{O}_F/R$ contains at least $p$ elements. This is a contradiction.
\end{proof}

The following proposition is an adaptation of the Strong Approximation Theorem, Theorem \ref{theostrongapproximation}, to our context.

\begin{proposition}
\label{propstrongapproximationadapted}
Let $F$ be a totally real number field and $\G$ be a connected almost simple and simply connected $F$-algebraic group which is compact over all real places of $F$ except one. Let $\Gamma<\G(F)$ be a finitely generated Zariski-dense subgroup. There exists a finite index subgroup $\Gamma'<\Gamma$, $a\in\mathcal{O}_F$ and a group scheme structure $\G_0$ over $({\mathcal{O}_F})_a$ on $\G$ such that $\Gamma'<\G_0(({\mathcal{O}_F})_a)$ and for all prime ideals $\mathfrak{p}$ of $\mathcal{O}_F$ but finitely many, $\Gamma'$ surjects onto $$\G_0\left(\mathcal{O}_F\!\raisebox{-.65ex}{\ensuremath{/\mathfrak{p}}}\right).$$
\end{proposition}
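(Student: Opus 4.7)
The plan is to apply Weisfeiler's Strong Approximation Theorem to $\Gamma$ and then upgrade the conclusions from the trace-generated ring $R=\mathds{Z}[\Tr(\Ad(\Gamma))]$ to $\mathcal{O}_F$ at almost all primes, using Lemmas \ref{lemmatraceorder} and \ref{lemmaprimeidealsinorder}.

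First I would observe that since $\G$ is absolutely almost simple and simply connected, its base change to $\Qbar$ satisfies the hypotheses of Theorem \ref{theostrongapproximation}. Applying that theorem to $\Gamma\subset\G(F)\subset\G(\Qbar)$ yields an element $a\in R$, a finite-index subgroup $\Gamma'<\Gamma$ and a group scheme structure $\G_0$ on $\G$ over $R_a$ such that $\Gamma'<\G_0(R_a)$. The hypotheses on $\G$ in the proposition match exactly those of Lemma \ref{lemmatraceorder} (in particular the compactness at all real places of $F$ except one is used to identify the trace field with $F$), so $R$ is an order of $F$, whence $R\subset\mathcal{O}_F$ and the natural map $R_a\hookrightarrow(\mathcal{O}_F)_a$ is an inclusion of subrings of $F$. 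Base changing $\G_0$ along this inclusion produces the group scheme structure over $(\mathcal{O}_F)_a$ required by the proposition, and $\Gamma'$ sits inside its $(\mathcal{O}_F)_a$-points.

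Next I would match residue fields at almost all primes. Exclude the finitely many primes $\mathfrak{p}\subset\mathcal{O}_F$ containing $a$ together with the finitely many primes for which the conclusion of Lemma \ref{lemmaprimeidealsinorder} fails. For each surviving $\mathfrak{p}$, set $I=R\cap\mathfrak{p}$; then $R/I=\mathcal{O}_F/\mathfrak{p}$ is a finite field, and since $a\notin\mathfrak{p}$ the class of $a$ is a unit in this field. Hence $IR_a$ is a maximal ideal of $R_a$ with $R_a/IR_a=R/I=\mathcal{O}_F/\mathfrak{p}$, and the inclusion $R_a\hookrightarrow(\mathcal{O}_F)_a$ descends to the identity on $\mathcal{O}_F/\mathfrak{p}$. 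Consequently $\G_0(R_a/IR_a)$ is canonically identified with $\G_0(\mathcal{O}_F/\mathfrak{p})$, and applying Remark \ref{remarksurjectionstrongapproximation} to the ideal $IR_a\subset R_a$ yields the desired surjectivity of the reduction map $\Gamma'\to\G_0(\mathcal{O}_F/\mathfrak{p})$.

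The main conceptual obstacle is that Weisfeiler's theorem only controls approximation over the ring $R$, which a priori may be a strictly smaller order than $\mathcal{O}_F$; Lemma \ref{lemmaprimeidealsinorder} is precisely what resolves this, by guaranteeing that $R$ and $\mathcal{O}_F$ induce the same residue fields at all but finitely many primes. The rest of the argument is bookkeeping around the commutation of localization and quotient.
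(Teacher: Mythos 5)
Your proposal is correct and follows essentially the same route as the paper's own proof: apply Weisfeiler's Strong Approximation Theorem over the trace ring $R$, invoke Lemma \ref{lemmatraceorder} to identify $R$ as an order in $\mathcal{O}_F$, and use Lemma \ref{lemmaprimeidealsinorder} together with Remark \ref{remarksurjectionstrongapproximation} to transfer the surjectivity of reductions from primes of $R_a$ to almost all primes of $\mathcal{O}_F$. The only difference is presentational — you spell out the localization/quotient bookkeeping a bit more explicitly than the paper does.
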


\begin{remark}
The above makes sense since for all prime ideals $\mathfrak{p}$ of $\mathcal{O}_F$ that do not contain $a$, $$({\mathcal{O}_F})_a\!\raisebox{-.65ex}{\ensuremath{/\mathfrak{p}_a}}\simeq\mathcal{O}_F\!\raisebox{-.65ex}{\ensuremath{/\mathfrak{p}}}.$$
\end{remark}

\begin{proof}
Let $R=\mathds{Z}[\Tr(\Ad(\Gamma))]$. By the Strong Approximation Theorem, Theorem \ref{theostrongapproximation}, there exists $a\in R$, a finite index subgroup $\Gamma'<\Gamma$ and a group scheme structure $\G_0$ over $R_a$ on $\G$ such that $\Gamma'<\G_0(R_a)$ and $\Gamma'$ is dense in
$$\varprojlim_{|R_a/I|<\infty}\G_0\big(R_a\!\raisebox{-.65ex}{\ensuremath{/I}}\big).$$ Since $R\subset\mathcal{O}_F$ we can also consider $\G_0$ as a group scheme over $({\mathcal{O}_F})_a$. Let $\mathfrak{p}$ be a prime ideal in $\mathcal{O}_F$ that does not contain $a$ and such that
\begin{equation*}
    R\!\raisebox{-.65ex}{\ensuremath{/R\cap\mathfrak{p}}}=\mathcal{O}_F\!\raisebox{-.65ex}{\ensuremath{/\mathfrak{p}}}.
\end{equation*}
Since $R$ is an order of $F$ (see Lemma \ref{lemmatraceorder}), Lemma \ref{lemmaprimeidealsinorder} implies that there is only finitely many prime ideals that do not satisfy both of those assumptions. As explained in Remark \ref{remarksurjectionstrongapproximation}, $\Gamma'$ surjects onto
\begin{equation*}
    \G_0\big(R_a\!\raisebox{-.65ex}{\ensuremath{/(R\cap\mathfrak{p})_a}}\big)\simeq\G_0\big(R\!\raisebox{-.65ex}{\ensuremath{/(R\cap\mathfrak{p})}}\big)\simeq\G_0\big(\mathcal{O}_F\!\raisebox{-.65ex}{\ensuremath{/\mathfrak{p}}}\big).
\end{equation*}
\end{proof}

\subsection{Proof of Theorem \ref{theoprincipalthinHitchin}}

Recall that we would like to prove that every lattice $\Lambda$ in $G$ as in Table \ref{Table1} contains infinitely many mapping class group orbits of Zariski-dense Hitchin representations, for some fixed genus.

\begin{proof}[Proof of Theorem \ref{theoprincipalthinHitchin} for $G=\Sp(2n,\mathds{R})$]
Let $n\geq2$. Let $\Lambda<\Sp(2n,\mathds{R})$ be a uniform lattice. By Proposition \ref{propclassificationlatticeSp} there exists a quaternion algebra $A$ over a totally real number field $F\neq\mathds{Q}$ and an order $\mathcal{O}$ of $A$ such that $\Lambda$ is widely commensurable with $\SU(\I_n,\tc;\mathcal{O})$. Proposition \ref{proplatticeSp} shows that $\tau_{2n}(\mathcal{O}^1)$ lies in a lattice widely commensurable with $\Lambda$.

Let $S$ be the quotient $\mathds{H}^2/\mathcal{O}^1$. Up to finite cover, it is a closed surface of genus at least $2$. Denote by $\rho$ the Fuchsian representation of $\pi_1(S)$ induced by $\tau_{2n}$. Let $\gamma$ be a simple closed separating curve on $S$. By Lemma \ref{centralizercocompactlattice}, there exists $\B\in\Lambda$ which commutes with $\rho(\gamma)$, has only positive eigenvalues and such that $\B\not\in\tau_{2n}(\GL(2,\mathds{R}))$. The bent representation $\rho_{\B}$ of $\rho$ by $\B$ along $\gamma$ has Zariski-dense image in $\Sp(2n,\mathds{R})$ (see Lemma \ref{lemmaZariskidensity}).

Furthermore the sequence of representations $\rho_{\B^k}$ are Zariski-dense for all $k\geq1$ and give rise to infinitely many $\MCG(S)$-orbits of Hitchin representations, as we will show now. Let $k\geq1$ and denote by $\Gamma_k$ the image of $\rho_{\B^k}$. Since $F$-forms of $\Sp_{2n}$ are simply connected, the Strong Approximation Theorem (Theorem \ref{theostrongapproximation} and Proposition \ref{propstrongapproximationadapted}) implies that for all but finitely many prime ideals $\mathfrak{p}$ of $\mathcal{O}_F$, a finite index subgroup $\Gamma_k'<\Gamma_k$ surjects onto $\Sp(2n,\mathcal{O}_F/\mathfrak{p})$. Lemma 5.10 in \cite{Audibert_Zariskidensesurfacegroups} shows that $$\Tr(\Gamma_k')\equiv\mathcal{O}_F\mod\mathfrak{p}$$ for all but finitely many $\mathfrak{p}$.

Suppose that $\{\rho_{\B^k},k\geq1\}$ lie in a finite number of orbits under $\MCG(S)$. Note that there exists $\PP\in\mathds{Z}[\X]$ such that $\Tr(\tau_{2n}(\M))=\PP(\Tr(\M))$ for all $\M$. By our assumption, there is a prime ideal $\mathfrak{p}$ for which the reduction of $\Tr(\Gamma_k')$ is surjective for all $k\geq1$ and the map
$$\mathcal{O}_F\!\raisebox{-.65ex}{\ensuremath{/\mathfrak{p}}}\to\mathcal{O}_F\!\raisebox{-.65ex}{\ensuremath{/\mathfrak{p}}},\ x\mapsto\PP(x)$$ is not a surjection (see Corollary 1.8 in \cite{Shallue_PermutationPolynomialsFiniteFields}). Let $k\geq1$ such that $\B^k$ is trivial modulo $\mathfrak{p}$. Then $$\Tr(\Gamma_k')=\Tr(\rho(\pi_1(S)))=\PP(\Tr(\mathcal{O}^1))\mod\mathfrak{p}$$ which is not $\mathcal{O}_F/\mathfrak{p}$. This is a contradiction.
\end{proof}

\begin{proof}[Proof of Theorem \ref{theoprincipalthinHitchin} for $G=\SL(2k+1,\mathds{R})$]
Let $n=2k+1\geq3$ be odd. Let $F\neq\mathds{Q}$ be a totally real number field and $d\in\mathcal{O}_F$ which is positive at exactly one real place of $F$. Define $A=(d,d)_F$. Let $\mathcal{O}$ be an order in $A$. Proposition \ref{proplatticeSLodd} implies that $\tau_n(\mathcal{O}^1)$ lies in a subgroup $\Lambda$ of $\SL(n,\mathds{R})$ which widely commensurable with $\SU(\I_n,\sigma;\mathcal{O}_F[\sqrt{d}])$ where $\sigma\in\Gal(F(\sqrt{d})/F)$ is non-trivial.

The quotient $S:=\mathds{H}^2/\mathcal{O}^1$ is a closed surface of genus at least $2$ up to finite cover. Let $\rho$ be the Fuchsian representation of $\pi_1(S)$ induced by $\tau_{2n}$. Let $\gamma$ be a simple closed separating curve on $S$. By Lemma \ref{centralizercocompactlattice}, there exists $\B\in\Lambda$ which commutes with $\rho(\gamma)$, has only positive eigenvalues and such that $\B\not\in\SO(\J_n,\mathds{R})$. The bent representation $\rho_{\B}$ of $\rho$ by $\B$ along $\gamma$ has Zariski-dense image in $\SL(n,\mathds{R})$ (see Lemma \ref{lemmaZariskidensity}).

We now show that the sequence of representations $\rho_{\B^l}$ give rise to infinitely many $\MCG(S)$-orbits. Let $l\geq1$ and denote by $\Gamma_l$ the image of $\rho_{\B^l}$. Since $F$-forms of $\SL_n$ are simply connected, the Strong Approximation Theorem (Theorem \ref{theostrongapproximation} and Proposition \ref{propstrongapproximationadapted}) implies that for all but finitely many prime ideals $\mathfrak{p}$ of $\mathcal{O}_F$, a finite index subgroup $\Gamma_l'<\Gamma_l$ surjects onto
\begin{equation*}
    \left\{\begin{array}{ll}
        \SL_n\big(\mathcal{O}_F\!\raisebox{-.65ex}{\ensuremath{/\mathfrak{p}}}\big) & \mbox{if $d$ is a square in $\mathcal{O}_F\!\raisebox{-.65ex}{\ensuremath{/\mathfrak{p}}}$} \\
        \SU\big(\I_n,\sigma_0;\mathcal{O}_F\!\raisebox{-.65ex}{\ensuremath{/\mathfrak{p}}}[\sqrt{d}]\big) & \mbox{if $d$ is not a square in $\mathcal{O}_F\!\raisebox{-.65ex}{\ensuremath{/\mathfrak{p}}}$}
    \end{array}\right.
\end{equation*} for $$\sigma_0\in\Gal\big(\mathcal{O}_F\!\raisebox{-.65ex}{\ensuremath{/\mathfrak{p}}}(\sqrt{d})/\mathcal{O}_F\!\raisebox{-.65ex}{\ensuremath{/\mathfrak{p}}}\big)$$ non-trivial. See \S 5.3 of \cite{Audibert_Zariskidensesurfacegroups} for details on this dichotomy. Lemmas 5.6 and 5.7 in \cite{Audibert_Zariskidensesurfacegroups} show that
\begin{equation*}
    \Tr(\Gamma_l')=\left\{\begin{array}{ll}
    \mathcal{O}_F\!\raisebox{-.65ex}{\ensuremath{/\mathfrak{p}}}\mod\mathfrak{p} & \mbox{if $d$ is a square in $\mathcal{O}_F\!\raisebox{-.65ex}{\ensuremath{/\mathfrak{p}}}$} \\
    \mathcal{O}_F\!\raisebox{-.65ex}{\ensuremath{/\mathfrak{p}}}[\sqrt{d}]\mod\mathfrak{p} & \mbox{if $d$ is not a square in $\mathcal{O}_F\!\raisebox{-.65ex}{\ensuremath{/\mathfrak{p}}}$}
\end{array}\right.
\end{equation*} for all but finitely many prime ideals $\mathfrak{p}$.

Suppose that $\{\rho_{\B^l},l\geq1\}$ lie in a finite number of orbits under $\MCG(S)$. As before, there exists $\PP\in\mathds{Z}[\X]$ such that $\Tr(\tau_n(\M))=\PP(\Tr(\M))$ for all $\M$. By our assumption, there is a prime ideal $\mathfrak{p}$ such that the reduction of $\Tr(\Gamma'_l)$ is surjective for all $l\geq1$ and the map
$$\mathcal{O}_F\!\raisebox{-.65ex}{\ensuremath{/\mathfrak{p}}}\to\mathcal{O}_F\!\raisebox{-.65ex}{\ensuremath{/\mathfrak{p}}},\ x\mapsto\PP(x)$$ is not a surjection (see Corollary 1.8 in \cite{Shallue_PermutationPolynomialsFiniteFields}). Let $l\geq1$ such that $\B^l$ is trivial modulo $\mathfrak{p}$. Then
$$\Tr(\Gamma_l')=\Tr(\rho(\pi_1(S)))=\PP(\Tr(\mathcal{O}^1))\mod\mathfrak{p}.$$This is a contradiction.
\end{proof}

\begin{proof}[Proof of Theorem \ref{theoprincipalthinHitchin} for $G=\SL(2n,\mathds{R})$]
Let $n\geq2$. Let $F\neq\mathds{Q}$ be a totally real number field. Let $A$ be a quaternion algebra over $F$ which splits at exactly one real place $\sigma$ among the real places of $F$ and $\mathcal{O}$ be an order of $A$. Let $d\in\mathcal{O}_F$ which is positive exactly at $\sigma$. Proposition \ref{proplatticeSLeven} shows that $\tau_{2n}(\mathcal{O}^1)$ lies in a lattice $\Lambda$ of $\SL(2n,\mathds{R})$ widely commensurable with $\SU(\I_n,\tc\otimes\sigma;\mathcal{O}\otimes\mathcal{O}_F[\sqrt{d}])$. The proof now follows exactly the argument of the proof of Theorem \ref{theoprincipalthinHitchin} for $G=\SL(2k+1,\mathds{R})$.
\end{proof}

\begin{proof}[Proof of Theorem \ref{theoprincipalthinHitchin} for $G=\emph{\textbf{G}}_2(\mathds{R})$]
Let $\Lambda$ be a uniform lattice in $\textbf{G}_2(\mathds{R})$. By Proposition \ref{propclassificationlatticeG2}, there exists a totally real number field $F\neq\mathds{Q}$ and $a,b\in \mathcal{O}_F$ such that for all embeddings $\sigma:F\to\mathds{R}$ except one, $\sigma(a)$ and $\sigma(b)$ are negative and $\Lambda$ is widely commensurable with $\textbf{G}_2^{a,b}(\mathcal{O}_F)$. Let $A=(a,b)_F$ and $\mathcal{O}$ be an order of $A$. Proposition \ref{proplatticeG2} shows that $\tau_7(\mathcal{O}^1)$ lies in $\Lambda$ up to wide commensurability.

The proof follows the argument of Theorem \ref{theoprincipalthinHitchin} for $G=\Sp(2n,\mathds{R})$, using the following lemma.
\end{proof}

\begin{lemma}
Let $p\neq2$ be a prime and let $q=p^n$.  Let $\G_2(\mathds{F}_q)$ be the finite group of Lie type associated to the Dynkin diagram $\G_2$ embedded in $\SL_7(\mathds{F}_q)$ via its $7$-dimensional irreducible representation. Then $\Tr(\G_2(\mathds{F}_q))=\mathds{F}_q$.
\end{lemma}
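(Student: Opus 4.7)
The plan is to exhibit, inside $\G_2(\mathds{F}_q)\subset\SL_7(\mathds{F}_q)$, a copy of $\SL_3(\mathds{F}_q)$ on which the trace function is easy to control. Recall that $\G_2$ contains a semisimple subgroup of type $A_2$, coming from the $\mathds{Z}$-defined sub-root system of long roots (equivalently, from removing the short simple root in the affine Dynkin diagram of $\G_2$). This yields an embedding $\iota:\SL_3\hookrightarrow\G_2$ as group schemes over $\mathds{Z}$, hence in particular a subgroup $\iota(\SL_3(\mathds{F}_q))<\G_2(\mathds{F}_q)$.

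The key input is the restriction of the $7$-dimensional representation of $\G_2$ to this $\SL_3$. I claim it decomposes as
\begin{equation*}
    V\oplus V^*\oplus\mathbf{1},
\end{equation*}
where $V$ is the standard $3$-dimensional representation of $\SL_3$ and $\mathbf{1}$ is the trivial one. This is a weight-theoretic statement: the weights of the $7$-dimensional representation of $\G_2$ are $0$ together with the six short roots of $\G_2$, and under the long-root $\SL_3$ the six short roots split into two orbits of size $3$ which are naturally identified with the weights of $V$ and $V^*$ respectively, while $0$ remains the zero weight. (Concretely this can be seen via the octonion model: $\G_2=\Aut(\mathds{O})$, the subgroup $\SL_3$ is the stabiliser of a ``complex structure'' on $\mathds{O}$, and $\Imm(\mathds{O})=\mathds{F}_q\cdot i\oplus\mathds{F}_q^3\oplus(\mathds{F}_q^3)^*$ as $\SL_3$-module.) It follows that for every $g\in\SL_3(\mathds{F}_q)$, the trace of $\iota(g)$ as an element of $\SL_7(\mathds{F}_q)$ equals $1+\Tr(g)+\Tr(g^{-1})$.

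Given the formula above, surjectivity of the trace is immediate. Fix $x\in\mathds{F}_q$ and consider the polynomial $P(X)=X^3-(x-1)X^2-1\in\mathds{F}_q[X]$. Its companion matrix
\begin{equation*}
    g=\begin{pmatrix}0&0&1\\1&0&0\\0&1&x-1\end{pmatrix}
\end{equation*}
lies in $\SL_3(\mathds{F}_q)$ and has characteristic polynomial $P(X)$, so $\Tr(g)=x-1$ and, since the linear coefficient of $P$ is $0$, $\Tr(g^{-1})=0$. Therefore $\Tr(\iota(g))=1+(x-1)+0=x$, proving $\Tr(\G_2(\mathds{F}_q))\supset\mathds{F}_q$.

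The only non-routine step is the decomposition of the $7$-dimensional representation restricted to $\SL_3$. I would spend the bulk of the write-up justifying it, either by the weight computation sketched above or by reference to the Chevalley-basis description of $\G_2$ and its fundamental representation (where the hypothesis $p\neq2$ guarantees the representation stays $7$-dimensional and the $\SL_3$-submodules are well defined). Everything else—the existence of the $\SL_3$ subgroup over $\mathds{Z}$, the identification $\Tr_{V^*}(g)=\Tr(g^{-1})$ for $g\in\SL_n$, and the companion matrix construction—is standard and characteristic-free.
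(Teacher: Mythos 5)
Your proof is correct, but it takes a genuinely different route from the paper's. The paper stays entirely inside the octonion model: writing $\mathds{O}=\M_2(\mathds{F}_q)\oplus\M_2(\mathds{F}_q)$, it produces for each $a\in\mathds{F}_q$ an explicit automorphism $\varphi_a\colon(\A,\B)\mapsto(\A,\X\B)$ with $\X=\begin{psmallmatrix}a&1\\-1&0\end{psmallmatrix}\in\SL_2(\mathds{F}_q)$, checks it is an automorphism using $\overline{\X}\X=\I_2$, and computes its trace on $\Imm(\mathds{O})$ directly (the paper states $a+3$; since $\varphi_a$ fixes $\mathfrak{sl}_2$ pointwise and acts on $\M_2$ by left multiplication by $\X$, the value is $3+2\Tr(\X)=3+2a$, but either affine-linear function of $a$ surjects onto $\mathds{F}_q$ because $p\neq2$). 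You instead pass to the long-root $\SL_3\hookrightarrow\G_2$ and the decomposition $V\oplus V^*\oplus\mathbf{1}$ of the $7$-dimensional module, then hit any target trace via a companion matrix in $\SL_3$ whose characteristic polynomial has vanishing linear coefficient. Your approach is more structural and reduces the problem to a one-line computation inside $\SL_3$, at the cost of invoking (and ultimately having to justify or cite) the restriction of the $7$-dimensional representation to the long-root subgroup; the paper's approach is a self-contained verification inside the octonion model the article is already using, with no representation-theoretic input beyond the definition of $\G_2(\mathds{F}_q)=\Aut(\mathds{O})$. One small wording caveat in your sketch: in characteristic $2$ the $7$-dimensional module still exists, it merely fails to be irreducible (it has a $1$-dimensional trivial submodule); the hypothesis $p\neq2$ guarantees irreducibility, not $7$-dimensionality.
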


\begin{proof}
The group $\G_2(\mathds{F}_q)$ is the automorphism group of the unique octonion algebra $\mathds{O}$ over $\mathds{F}_q$, see \S1.10 of \cite{Springer_OctonionJordanAlgebrasExceptionalGroups}. We can describe it as follows. As an $\mathds{F}_q$-vector space $\mathds{O}=\M_2(\mathds{F}_q)\oplus\M_2(\mathds{F}_q)$. Let $\overline{\phantom{s}}$ denote the conjugation of the quaternion algebra $\M_2(\mathds{F}_q)$. For any $\A\in\M_2(\mathds{F}_q)$, $\overline{\A}$ is the transpose of the cofactor matrix of $\A$. The multiplication is defined by $$(\A_1,\B_1)\cdot(\A_2,\B_2)=(\A_1\A_2-\overline{\B_2}\B_1,\B_2\A_1+\B_1\overline{\A_2}),$$ see \S1.5 in \cite{Springer_OctonionJordanAlgebrasExceptionalGroups}. We endow $\mathds{O}$ with the quadratic form $$(\A,\B)\mapsto\Det(\A)+\Det(\B).$$ An automorphism of $\mathds{O}$ induces an element of $\G_2(\mathds{F}_q)$ by its restriction to the orthogonal of $\langle(\I_2,0)\rangle$.

Let $a\in\mathds{F}_q$ and consider the following automorphism
$$\varphi_{a}:\mathds{O}\to\mathds{O},\ (\A,\B)\to(\A,\X\B)\ \textrm{where} \X=\begin{pmatrix}
a&1\\
-1&0
\end{pmatrix}.$$ It is an automorphism of $\mathds{O}$ and thus defines an element of $\G_2(\mathds{F}_q)$. Picking a basis of $\mathds{O}$, computations show that it has trace $a+3$. Hence any element of $\mathds{F}_q$ is the trace of an element of $\G_2(\mathds{F}_q)$.
\end{proof}

Since $F$-forms of $\SO_n$ are not simply connected, we cannot apply the Strong Approximation Theorem as stated in Proposition \ref{propstrongapproximationadapted}. We will instead use Theorem 5.1 in Nori \cite{Nori_SubgroupsofGLn(Fp)}. Before doing so, we need to prove the following lemma.

\begin{lemma}
\label{lemmaZariskidensityinproducts}
    Let $\Gamma$ be a group and $G_1,\dots,G_n$ be centerless connected simple Lie groups. For all $1\leq i\leq n$, let $\rho_i:\Gamma\to G_i$ be a Zariski-dense representation. Suppose that there does not exist $i\neq j$ and a continuous isomorphism $\phi: G_i\to G_j$ satisfying $\phi\circ\rho_i=\rho_j$. Then $$\rho_1\times\dots\times\rho_n:\Gamma\to G_1\times\dots\times G_n$$ is Zariski-dense.
\end{lemma}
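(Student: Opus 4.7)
The plan is to proceed by induction on $n$, the case $n=1$ being the hypothesis itself. For the inductive step, denote by $H$ the Zariski closure of the image of $\rho := \rho_1 \times \cdots \times \rho_n$ in $G_1 \times \cdots \times G_n$, and set $A = G_1 \times \cdots \times G_{n-1}$. The inductive hypothesis applied to $\rho_1, \ldots, \rho_{n-1}$ (whose pairwise incompatibility is inherited from the hypothesis) yields $\pi_A(H) = A$, and Zariski density of $\rho_n$ gives $\pi_{G_n}(H) = G_n$. So I am reduced to the two-factor case: I must show that a Zariski-closed subgroup $H \subseteq A \times G_n$ surjecting onto both factors equals $A \times G_n$, provided no continuous isomorphism $\phi: G_i \to G_n$ with $i<n$ satisfies $\phi \circ \rho_i = \rho_n$.

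Next I invoke Goursat's lemma for subgroups of a product: $H$ is a fiber product of $A$ and $G_n$ over a common quotient $Q$, that is $H = \{(a,b) \in A \times G_n : \overline{a} = \overline{b}\}$ for surjections $A \twoheadrightarrow Q \twoheadleftarrow G_n$. Since $G_n$ is simple and centerless, its only normal subgroups are trivial or itself, so $Q$ is either trivial (whence $H = A \times G_n$ and we are done) or isomorphic to $G_n$. In the latter case I use the fact that, each $G_j$ being centerless and simple, every closed normal subgroup of $A = \prod_j G_j$ is a partial product $\prod_{j \in S} G_j$; hence the surjection $A \twoheadrightarrow Q \simeq G_n$ factors through the projection onto a single factor $G_i$, yielding a continuous isomorphism $\phi: G_i \to G_n$.

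It remains to translate this isomorphism into the intertwining relation forbidden by the hypothesis. By construction $(\rho_1(\gamma), \ldots, \rho_n(\gamma)) \in H$ for every $\gamma \in \Gamma$, and the defining equation of $H$ becomes $\phi(\rho_i(\gamma)) = \rho_n(\gamma)$. This contradicts the assumption on the $\rho_j$, so this branch cannot occur and $H = A \times G_n$, completing the induction.

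The main obstacle I anticipate is making Goursat's lemma rigorous in the Lie-group setting: I need to know that the two projections of the Zariski-closed subgroup $H$ are genuinely surjective rather than merely Zariski-dense, and that the normal-subgroup classification in $A = \prod_j G_j$ behaves as in the abstract-group case. Both should be routine: images of morphisms of algebraic groups are closed, and the Lie algebra of $A$ being the direct sum of simple ideals forces every connected normal subgroup to be a partial product. A minor technical care is needed because $\phi$ arises abstractly from Goursat's lemma and must be shown continuous, which follows from the fact that a group isomorphism between two simple Lie groups that is obtained as a quotient map of algebraic data is automatically a morphism of Lie groups.
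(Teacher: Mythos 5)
Your proof is correct and takes essentially the same approach as the paper: induction on $n$, surjectivity of the projections of the Zariski closure $H$, the fact that centerless connected simple Lie groups are abstractly simple so that closed normal subgroups of a product are partial products, and a dichotomy between $H$ containing a full factor versus $H$ being the graph of an isomorphism intertwining two of the $\rho_i$, contradicting the hypothesis. The only cosmetic difference is that you package the dichotomy via Goursat's lemma, whereas the paper unrolls the same argument by hand, setting $H_i = H\cap(\{e\}\times\cdots\times G_i\times\cdots\times\{e\})$, observing $\pi_i(H_i)\trianglelefteq G_i$ is trivial or full, and in the all-trivial case defining $\phi(g)$ directly as the unique $h$ with $(g,h)\in H$; the surjectivity of the single-factor and complementary projections, which you flag as the main technical worry, is supplied in the paper via a citation to Lemma~3.1 of \cite{DalBo_Markedlengthrigidityforsymmetricspaces}.
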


\begin{proof}
We will show the lemma by induction on $n\geq1$. The case $n=1$ is clear. Let $n\geq2$ and assume the result has been shown for $n-1$. Denote by $H$ the Zariski-closure of the image of $\rho\coloneqq\rho_1\times\dots\times\rho_n$. We want to show that $H$ is equal to $G\coloneqq G_1\times\dots\times G_n$.

Let $\pi_i:G\to G_i$ be the projection onto the $i$-th coordinate. By Lemma 3.1 in \cite{DalBo_Markedlengthrigidityforsymmetricspaces}, $\pi_i(H)=G_i.$ Let $p_i:G\to G/G_i$ be the projection that forgets about the $i$-th coordinate. Using the induction hypothesis and Lemma 3.1 in \cite{DalBo_Markedlengthrigidityforsymmetricspaces}, $p_i(H)=G/G_i$.
    
Denote by $$H_i=H\cap(\{e\}\times...\times\{e\}\times G_i\times\{e\}\times...\times\{e\})\xhookrightarrow{\pi_i} G_i.$$ Let $h\in H_i$ and $g\in G_i$. There exists $k\in H$ such that $\pi_i(k)=g$. Hence $$g\pi_i(h)g^{-1}=\pi_i(khk^{-1})\in\pi_i(H_i)$$ which proves that $\pi_i(H_i)$ is normal in $G_i$.
    
Suppose that for some $1\leq i\leq n$, $\pi_i(H_i)=G_i$. Denote by $$K_i=H\cap(G_1\times\dots\times G_{i-1}\times\{e\}\times G_{i+1}\times\dots\times G_n)\xhookrightarrow{p_i}G/G_i.$$ Pick $g\in G/G_i$. There exists $h\in H$ such that $p_i(h)=g$. Denote by $h_i$ its $i$-th coordinate. There exists $k\in H_i$ such that $\pi_i(k)=h_i$. Hence $hk^{-1}\in K_i$ and is send to $g$ under $p_i$. This shows that $K_i\hookrightarrow G/G_i$ is an isomorphism. Finally $H=G$.

Otherwise, suppose that $\pi_i(H_i)$ is trivial for all $i$. Pick $g\in G/G_n$. There exists a unique $\phi(g)\in G_n$ such that $(g,\phi(g))\in H$. Indeed if $(g,h)$ and $(g,h')$ are in $H$ then $(e,h^{-1}h')\in \pi_n(H_n)$ which is trivial. If follows from $$(g,\phi(g))(g',\phi(g'))=(gg',\phi(g)\phi(g'))$$ that $\phi$ is a group homomorphism. Since $\pi_n(H)=G_n$, $\phi$ is surjective.

For each $1\leq i\leq n-1$, the restriction of $\phi$ to $G_i$ is either trivial or an isomorphism. Indeed the image of $G_i$ under $\phi$ is a normal subgroup of $G_n$. Hence there exists $1\leq i\leq n-1$ such that $\phi_{|G_i}:G_i\to G_n$ is an isomorphism. Since its graph is closed, it is continuous. The same holds also for its inverse. Finally $\phi_{|G_i}\circ\rho_i=\rho_j$. Contradiction.
\end{proof}

\begin{proof}[Proof of Theorem \ref{theoprincipalthinHitchin} for $G=\SO(k+1,k)$]
Let $n=2k+1\geq5$ with $k\equiv1,2[4]$. Let $\Lambda$ be a uniform lattice in $\SO(k+1,k)$. Proposition \ref{propclassificationlatticeSO} shows that it is widely commensurable to $\SO(Q,\mathcal{O}_F)$ for $F\neq\mathds{Q}$ a totally real number field, $Q\in\SL(n,F)$ a symmetric matrix which is positive definite at all real places of $F$ except one such real place $\sigma$ for which $Q$ has signature equal to $(k+1,k)$ if $k$ is even and $(k,k+1)$ if $k$ is odd.

We want to apply Proposition \ref{proplatticeSO} to an order of a suitable quaternion algebra over $F$. Denote by $S$ the set of finite places $\mathcal{P}$ of $F$ such that $$\mathcal{E}_{\mathcal{P}}(Q)\otimes(-1,-1)_{\mathcal{P}}\neq1.$$ We show that $S\cup V_F\setminus\{\sigma\}$ has even cardinality. By Hilbert's Reciprocity Law (see Theorem 0.9.10 in \cite{Maclachlan_ArithmeticHyperbolic3Manifolds}), $\mathcal{E}_{v}(Q)\otimes(-1,-1)_{v}\neq1$ at an even number of places $v$ of $F$. The number of infinite places $v\in V_F$ where $\mathcal{E}_{v}(Q)\otimes(-1,-1)_{v}\neq1$ is $|V_F|-1$ since \begin{equation*}
    \mathcal{E}_{v}(Q)=\left\{\begin{array}{ll}
        1 & \mbox{if $v\neq\sigma$} \\
        -1 & \mbox{if $v=\sigma$.}
    \end{array}\right.
\end{equation*}
Hence $|S|\equiv|V_F|-1\ [2]$ which implies that $S\cup V_F\setminus\{\sigma\}$ has even cardinality.

Theorem 7.3.6 in \cite{Maclachlan_ArithmeticHyperbolic3Manifolds} implies that there exists a quaternion algebra $A$ over $F$ that does not split exactly at $S\cup V_F\setminus\{\sigma\}$. Let $\mathcal{O}$ be an order of $A$. Proposition \ref{proplatticeSO} tells us that $\tau_n(\mathcal{O}^1)$ lies in a subgroup of $\SO(\J_n,\mathds{R})$ which, up to wide commensurability, we can assume to be $\SO(Q,\mathcal{O}_F)$.

Let $S=\mathds{H}^2/\mathcal{O}^1$. Up to finite cover, we can assume that $S$ is a closed surface of genus at least 2. Let $\gamma$ be a simple closed separating curve on $S$. The map $\tau_n$ induces a Fuchsian representation $\rho:\pi_1(S)\to\SO(\J_n,\mathds{R})$.
Lemma \ref{centralizercocompactlattice} implies that there exists $\B\in\SO(Q,\mathcal{O}_F)$ which commutes with $\rho(\gamma)$, has only positive eigenvalues but which is not in $\tau_n(\GL(2,\mathds{R}))$. The bent representation $\rho_{\B}$ of $\rho$ by $\B$ along the curve $\gamma$ has Zariski-dense image in $\SO(\J_n,\mathds{R})$ as Lemma \ref{lemmaZariskidensity} shows.
Furthermore the sequence of representations $\rho_{\B^l}$ are Zariski-dense for all $l\geq1$ and give rise to infinitely many $\MCG(S)$-orbits, as we will show now. Fix $l\geq1$ and let $\Gamma_l=\rho_{\B^l}(\pi_1(S))$.

For every prime ideal $\mathfrak{p}$ of $\mathcal{O}_F$, denote by
\begin{equation*}
    \pi_{\mathfrak{p}}:\SO(Q,\mathcal{O}_F)\to\SO(Q,\mathcal{O}_F/\mathfrak{p})
\end{equation*}
the reduction modulo $\mathfrak{p}$.
Let $\Omega(Q,\mathcal{O}_F/\mathfrak{p})$ be the commutator subgroup of $\SO(Q,\mathcal{O}_F/\mathfrak{p})$. We first prove that $\pi_{\mathfrak{p}}(\Gamma_l)$ contains $\Omega(Q,\mathcal{O}_F/\mathfrak{p})$ for every prime ideal $\mathfrak{p}$ except a finite number of them. Up to multiplying $Q$ be a constant, we can assume that it has coefficients in $\mathcal{O}_F$. Hence $\SO(Q)$ is an $\mathcal{O}_F$-group scheme. Denote it by $\G$ to simplify the exposition. Now $\Gamma_l<\G(\mathcal{O}_F)\simeq\Res_{\mathcal{O}_F/\mathds{Z}}\G(\mathds{Z})$ which is a subgroup of $\GL_{nd}(\mathds{Z})$ where $d$ is the degree of $F$. We have
\begin{align*}
    \Gamma_l&\hookrightarrow\Res_{\mathcal{O}_F/\mathds{Z}}\G(\mathds{R})\simeq\SO(\sigma_1(Q),\mathds{R})\times\dots\times\SO(\sigma_d(Q),\mathds{R})\\
    \gamma&\mapsto(\sigma_1(\gamma),\dots,\sigma_d(\gamma))
\end{align*}
where the $\sigma_i$ are the embeddings of $F$ in $\mathds{R}$, see \S 2.1.2 in \cite{Platonov_AlgebraicgroupsNumbertheory}. Since $\Gamma_l$ is Zariski-dense in $\SO(Q,\mathcal{O}_F)$, it is Zariski-dense in $\SO(\sigma_i(Q),\mathds{R})$ for every embedding $\sigma_i:F\hookrightarrow\mathds{R}$. Suppose that there exists $i\neq j$ such that $\sigma_i\circ\rho_{\B^l}$ is conjugated to $\sigma_j\circ\rho_{\B^l}$. Then for all $x\in \Tr(\Gamma_l)$, $\sigma_i(x)=\sigma_j(x)$. We showed in Lemma \ref{lemmatraceorder} that $F=\mathds{Q}(\Tr(\Ad(\Gamma_l)))\subset\mathds{Q}(\Tr(\Gamma_l))$. Hence $\sigma_i$ and $\sigma_j$ agree on $F$, which is not possible. By Lemma \ref{lemmaZariskidensityinproducts}, $\Gamma_l$ is Zariski-dense in $\Res_{\mathcal{O}_F/\mathds{Z}}\G$.\footnote{Note that $\Gamma_l$ lies in the connected component of $\SO(k+1,k)$.}

Let $m\geq1$ and $p$ be a prime. For any subgroup $A<\GL_m(\mathds{F}_p)$, we denote by $A^{+}$ the subgroup of $A$ generated by the elements of $A$ of order $p$. It is a normal subgroup. Let $$r_p:\GL_m(\mathds{Z})\to\GL_m(\mathds{F}_p)$$ be the reduction map. Theorem 5.1 in \cite{Nori_SubgroupsofGLn(Fp)} shows that for all but finitely many primes $p$
$$\Res_{\mathcal{O}_F/\mathds{Z}}\G(\mathds{F}_p)^{+}<r_p(\Gamma_l)<\Res_{\mathcal{O}_F/\mathds{Z}}\G(\mathds{F}_p).$$
Recall that for any prime $p$ $$\Res_{\mathcal{O}_F/\mathds{Z}}\G(\mathds{F}_p)\simeq\G(\mathds{F}_p\otimes_{\mathds{Z}}\mathcal{O}_F)\simeq\G(\mathcal{O}_F/p\mathcal{O}_F)\simeq\prod_{\mathfrak{p}|p}\G(\mathcal{O}_F/\mathfrak{p})$$ where $\mathfrak{p}$ are prime ideals in $\mathcal{O}_F$. For almost all prime ideals $\mathfrak{p}$ $$\G(\mathcal{O}_F/\mathfrak{p})\simeq\SO(\I_n,\mathcal{O}_F/\mathfrak{p})$$ since for almost all prime ideals $\mathfrak{p}$, $Q$ is non-degenerate over $\mathcal{O}_F/\mathfrak{p}$ and is thus equivalent to $\I_n$ or to $\lambda\I_n$ where $\lambda$ is not a square in $\mathcal{O}_F/\mathfrak{p}$. Now
$$\Res_{\mathcal{O}_F/\mathds{Z}}\G(\mathds{F}_p)^{+}\simeq\prod_{\mathfrak{p}|p}\SO(\I_n,\mathcal{O}_F/\mathfrak{p})^{+}=\prod_{\mathfrak{p}|p}\Omega(\I_n,\mathcal{O}_F/\mathfrak{p})$$ since $\Omega(\I_n,\mathcal{O}_F/\mathfrak{p})$ is a normal and simple index 2 subgroup of $\SO(\I_n,\mathcal{O}_F/\mathfrak{p})$, see Chapter 3 \S5 in \cite{Suzuki_GroupTheoryI}. Thus
$$r_p(\Gamma_l)\simeq\prod_{\mathfrak{p}|p}\pi_{\mathfrak{p}}(\Gamma_l)\implies\Omega(\I_n,\mathcal{O}_F/\mathfrak{p})<\pi_{\mathfrak{p}}(\Gamma_l)$$ for almost all prime ideals $\mathfrak{p}$. Finally, using Lemma 5.12\footnote{The proof of Lemma 5.12 holds even if the field is not prime.} of \cite{Audibert_Zariskidensesurfacegroups}, we conclude that the reduction of $\Tr(\Gamma_l)$ modulo $\mathfrak{p}$ is surjective for almost all prime ideals $\mathfrak{p}$. The proof ends the same way as the proof of Theorem \ref{theoprincipalthinHitchin} for $G=\Sp(2n,\mathds{R})$.

Let $k\equiv0,3[4]$ and $F$ be a totally real number field. We want to show that $\Lambda_F=\SO(q,\mathcal{O}_F)$ (see the \S\ref{subsectionexamplesarithmeticsubgroups} for the definition) contains infinitely many mapping class group orbits of Zariski-dense Hitchin representations. Let $A$ to be any quaternion algebra over $F$ that splits exactly at the real place where $q$ is not positive definite. Let $\mathcal{O}$ be an order in $A$ and denote $S=\mathds{H}^2/\mathcal{O}^1$. From now on, the proof goes as in the case $k\equiv1,2[4]$.
\end{proof}

\bibliographystyle{alpha}
\bibliography{main}

\begin{thebibliography}{KLM18}

\bibitem[Aud22]{Audibert_Zariskidensesurfacegroups}
Jacques Audibert.
\newblock Zariski-dense surface groups in non-uniform lattices of split real
  {L}ie groups, 2022.
\newblock arXiv:2206.01123.

\bibitem[Bor91]{Borel_Linearalgebraicgroups}
Armand Borel.
\newblock {\em Linear algebraic groups}, volume 126 of {\em Graduate Texts in
  Mathematics}.
\newblock Springer-Verlag, New York, second edition, 1991.

\bibitem[DK02]{DalBo_Markedlengthrigidityforsymmetricspaces}
Fran\c{c}oise Dal'Bo and Inkang Kim.
\newblock Marked length rigidity for symmetric spaces.
\newblock {\em Comment. Math. Helv.}, 77(2):399--407, 2002.

\bibitem[Dou22]{Douba_Hyperboliclatticesurfacesubgroup}
Sami Douba.
\newblock A hyperbolic lattice in each dimension with {Z}ariski-dense surface
  subgroups, 2022.
\newblock arXiv:2207.04255.

\bibitem[EO73]{Eberlein_Visibilitymanifolds}
P.~Eberlein and B.~O'Neill.
\newblock Visibility manifolds.
\newblock {\em Pacific J. Math.}, 46:45--109, 1973.

\bibitem[FG06]{Fock_ModuliSpacesofLocalsystems}
Vladimir Fock and Alexander Goncharov.
\newblock Moduli spaces of local systems and higher {T}eichm\"{u}ller theory.
\newblock {\em Publ. Math. Inst. Hautes \'{E}tudes Sci.}, (103):1--211, 2006.

\bibitem[Gui]{Guichard_Zariskiclosurepositive}
Olivier Guichard.
\newblock Zariski closure of positive and maximal representations.
\newblock In preparation.

\bibitem[Ham15]{Hamenstadt_Incompressiblesurfaceslocallysymmetricspaces}
Ursula Hamenst\"{a}dt.
\newblock Incompressible surfaces in rank one locally symmetric spaces.
\newblock {\em Geom. Funct. Anal.}, 25(3):815--859, 2015.

\bibitem[Ham17]{Hamenstadt_Specialpointscharactervariety}
Ursula Hamenstädt.
\newblock Special points in the character variety, 2017.
\newblock International Centre for Theoretical Sciences.

\bibitem[JM86]{Johnson_DeformationSpacesCompactHyperbolicManifolds}
Dennis Johnson and John~J. Millson.
\newblock Deformation spaces associated to compact hyperbolic manifolds.
\newblock {\em Bull. Amer. Math. Soc. (N.S.)}, 14(1):99--102, 1986.

\bibitem[KLM18]{Kahn_Surfacegroupsinuniformlattices}
Jeremy Kahn, François Labourie, and Shahar Mozes.
\newblock Surface groups in uniform lattices of some semi-simple groups, 2018.
\newblock arXiv:1805.10189.

\bibitem[KM12]{Kahn_Immersingsurfacesinhyerbolicthreemanifold}
Jeremy Kahn and Vladimir Markovic.
\newblock Immersing almost geodesic surfaces in a closed hyperbolic three
  manifold.
\newblock {\em Ann. of Math. (2)}, 175(3):1127--1190, 2012.

\bibitem[Lab06]{Labourie_Anosovflowssurfaceandcurves}
Fran\c{c}ois Labourie.
\newblock Anosov flows, surface groups and curves in projective space.
\newblock {\em Invent. Math.}, 165(1):51--114, 2006.

\bibitem[Lew82]{Lewis_IsometryClassificationHermitianForms}
D.~W. Lewis.
\newblock The isometry classification of {H}ermitian forms over division
  algebras.
\newblock {\em Linear Algebra Appl.}, 43:245--272, 1982.

\bibitem[LR14]{Long_ConstructingThin}
D.~D. Long and A.~W. Reid.
\newblock Constructing thin groups.
\newblock In {\em Thin groups and superstrong approximation}, volume~61 of {\em
  Math. Sci. Res. Inst. Publ.}, pages 151--166. Cambridge Univ. Press,
  Cambridge, 2014.

\bibitem[LR16]{Long_Thinsurfacesubgroupscocompactlattices}
D.~D. Long and A.~W. Reid.
\newblock Thin surface subgroups in cocompact lattices in
  {$\operatorname{SL}(3,\bold{R})$}.
\newblock {\em Illinois J. Math.}, 60(1):39--53, 2016.

\bibitem[LRT11]{Long_ZariskidensesurfaceSL3Z}
Darren~D. Long, Alan~W. Reid, and Morwen Thistlethwaite.
\newblock Zariski dense surface subgroups in {${\rm SL}(3,\mathbf Z)$}.
\newblock {\em Geom. Topol.}, 15(1):1--9, 2011.

\bibitem[LT18]{Long_ZariskidensesurfaceSL4Z}
D.~D. Long and M.~B. Thistlethwaite.
\newblock Zariski dense surface subgroups in {${\rm SL}(4,\mathbb Z)$}.
\newblock {\em Exp. Math.}, 27(1):82--92, 2018.

\bibitem[LT20]{Long_ZariskidensesurfaceSL2k+1Z}
D.~D. Long and M.~Thistlethwaite.
\newblock Zariski dense surface groups in {${\rm SL}(2k+1,\mathbb Z)$}, 2020.

\bibitem[Mil13]{Milne_LieAlgebrasAlgebraicGroupsLieGroups}
Milne.
\newblock Lie algebras, algebraic groups and {L}ie groups, version 2.00 2013.

\bibitem[Mor15]{Morris_IntroductionArithmeticGroups}
Dave~Witte Morris.
\newblock {\em Introduction to arithmetic groups}.
\newblock Deductive Press, 2015.

\bibitem[MR03]{Maclachlan_ArithmeticHyperbolic3Manifolds}
Colin Maclachlan and Alan~W. Reid.
\newblock {\em The arithmetic of hyperbolic 3-manifolds}, volume 219 of {\em
  Graduate Texts in Mathematics}.
\newblock Springer-Verlag, New York, 2003.

\bibitem[Nor87]{Nori_SubgroupsofGLn(Fp)}
Madhav~V. Nori.
\newblock On subgroups of {${\rm GL}_n({\bf F}_p)$}.
\newblock {\em Invent. Math.}, 88(2):257--275, 1987.

\bibitem[PR94]{Platonov_AlgebraicgroupsNumbertheory}
Vladimir Platonov and Andrei Rapinchuk.
\newblock {\em Algebraic groups and number theory}, volume 139 of {\em Pure and
  Applied Mathematics}.
\newblock Academic Press, Inc., Boston, MA, 1994.
\newblock Translated from the 1991 Russian original by Rachel Rowen.

\bibitem[Sam20]{Sambarino_InfinitesimalZariskiClosure}
Andrés Sambarino.
\newblock Infinitesimal {Z}ariski closures of positive representations, 2020.
\newblock arXiv:2012.10276.

\bibitem[Sar14]{Sarnak_NotesThinMatrixGroups}
Peter Sarnak.
\newblock Notes on thin matrix groups.
\newblock In {\em Thin groups and superstrong approximation}, volume~61 of {\em
  Math. Sci. Res. Inst. Publ.}, pages 343--362. Cambridge Univ. Press,
  Cambridge, 2014.

\bibitem[Ser73]{Serre_CourseArithmetic}
J.-P. Serre.
\newblock {\em A course in arithmetic}.
\newblock Graduate Texts in Mathematics, No. 7. Springer-Verlag, New
  York-Heidelberg, 1973.
\newblock Translated from the French.

\bibitem[Sha12]{Shallue_PermutationPolynomialsFiniteFields}
Christopher~J. Shallue.
\newblock Permutation polynomials of finite fields, 2012.
\newblock arXiv:1211.6044.

\bibitem[Suz82]{Suzuki_GroupTheoryI}
Michio Suzuki.
\newblock {\em Group Theory I}.
\newblock Springer-Verlag, Berlin Heidelberg New York, 1982.

\bibitem[SV00]{Springer_OctonionJordanAlgebrasExceptionalGroups}
Tonny~A. Springer and Ferdinand~D. Veldkamp.
\newblock {\em Octonions, {J}ordan algebras and exceptional groups}.
\newblock Springer Monographs in Mathematics. Springer-Verlag, Berlin, 2000.

\bibitem[Vin71]{Vinberg_Ringsdefinitniondensesubgroups}
\`E.~B. Vinberg.
\newblock Rings of definition of dense subgroups of semisimple linear groups.
\newblock {\em Izv. Akad. Nauk SSSR Ser. Mat.}, 35:45--55, 1971.

\bibitem[Wei84]{Weisfeiler_StrongapproximationZariskidensesubgroups}
Boris Weisfeiler.
\newblock Strong approximation for {Z}ariski-dense subgroups of semisimple
  algebraic groups.
\newblock {\em Ann. of Math. (2)}, 120(2):271--315, 1984.

\bibitem[Wie18]{Wienhard_InvitationHigherTeichmuller}
Anna Wienhard.
\newblock An invitation to higher {T}eichm\"{u}ller theory.
\newblock In {\em Proceedings of the {I}nternational {C}ongress of
  {M}athematicians---{R}io de {J}aneiro 2018. {V}ol. {II}. {I}nvited lectures},
  pages 1013--1039. World Sci. Publ., Hackensack, NJ, 2018.

\end{thebibliography}

\end{document}